\RequirePackage{plautopatch}
\documentclass{amsart}

\usepackage{amsmath,amssymb,amsthm,mathtools,mathrsfs}
\usepackage{booktabs}
\usepackage{array}
\usepackage{enumitem}
\usepackage{float}
\usepackage{iftex}
\ifPDFTeX\else

\fi
\usepackage{tikz}
\usetikzlibrary{arrows.meta}
\ifPDFTeX
\usepackage[hidelinks]{hyperref}
\else
\usepackage[dvipdfmx,hidelinks]{hyperref}
\fi
\usepackage{cleveref}

\numberwithin{equation}{section}

\theoremstyle{plain}
\newtheorem{theorem}{Theorem}[section]
\newtheorem{proposition}[theorem]{Proposition}
\newtheorem{lemma}[theorem]{Lemma}
\newtheorem{corollary}[theorem]{Corollary}
\theoremstyle{definition}
\newtheorem{definition}[theorem]{Definition}
\newtheorem*{definition*}{Definition}
\theoremstyle{remark}
\newtheorem{remark}[theorem]{Remark}

\newcommand{\R}{\mathbb R}
\newcommand{\B}{\mathbb B}
\newcommand{\BetaBall}{\mathrm F\mathbb B^n_{\beta_n}}
\newcommand{\Pyr}{\mathcal P}
\newcommand{\Lipplus}{\operatorname{Lip}^{+}_{1}}
\newcommand{\Var}{\operatorname{Var}}
\DeclareMathOperator{\supp}{supp}
\DeclareMathOperator{\arcosh}{arcosh}
\DeclareMathOperator{\arsinh}{arsinh}
\DeclareMathOperator{\artanh}{artanh}
\DeclareMathOperator{\Exp}{Exp}
\newcommand{\dF}{d_{\mathrm F}}
\newcommand{\dK}{d_{\mathrm K}}
\newcommand{\eps}{\varepsilon}

\title[Funk Beta Balls]{Funk Beta Balls: Exact Potentials and a Three-Phase Pyramid Diagram}
\author{Shigeaki Yokota}
\date{}
\keywords{asymmetric metric measure space, pyramid, Funk metric, beta measure, Gaussian limit}
\subjclass[2020]{Primary 53C23; Secondary 60D05, 28A33}

\begin{document}

\begin{abstract}
In a high-dimensional Funk ball, the endpoint potential makes the forward and reverse distances behave differently, while symmetrization erases the distinction that survives in pyramid limits. For Euclidean beta-type radial measures, we determine the natural-scale limits in all three parameter phases. The divergent phase yields directed Gaussian pyramids indexed by the balance between dimension and radial concentration. A positive limiting parameter yields a Gaussian--chi Funk horocone pyramid, built from Gaussian bases, chi-distributed heights, and a directed endpoint-potential increment, while a vanishing parameter yields the universal directed star pyramid. In the last phase, the dimension-dependent logarithmic contributions cancel exactly, so the conclusion requires no further rate condition.
\end{abstract}

\maketitle
\tableofcontents

\section{Introduction}\label{sec:introduction}

On high-dimensional unit spheres, normalized Riemannian volume forces every $1$-Lipschitz observable to be nearly constant on almost all of the sphere.  Gromov developed the geometry of mm-spaces to treat this phenomenon while retaining both the distance and the measure: the distance specifies the $1$-Lipschitz observables, whereas the measure decides whether they are nearly constant \cite[Introduction]{shioya2016mmg}.  Gromov introduced the observable distance and the stronger box distance to express concentration as convergence, and proposed a natural compactification \cite[Introduction and Section~2.2]{esaki-kazukawa-mitsuishi2024cones}.  In the later pyramid formulation, each mm-space is sent to the box-closed family of all spaces that it dominates \cite[Definition~2.24]{esaki-kazukawa-mitsuishi2024cones}.  Shioya metrized this topology and made the space of pyramids compact \cite[Definition~4.5 and Theorem~4.6]{shioya2022sugaku}.  This paper asks for a directed counterpart in the Funk geometry of the Euclidean unit ball with beta-type radial measures.  The forward and reverse behavior on the ray below makes asymmetry unavoidable.

In the forward Funk ball, the endpoint potential $\phi(x)\coloneqq-\frac12\log(1-|x|^2)$ is strictly subcritical at every point: its norm dual to the Klein distance $\dK$ is $|x|<1$, and the exact identity
\[ \dF(x,y)=\dK(x,y)+\phi(y)-\phi(x) \]
gives a positive distance between distinct points.  Its supremum over the ball is nevertheless one.  There is therefore no uniform subcritical margin and no uniform control of the reverse distance, and one orientation of the distance can vanish in a limit.

This imbalance is already visible on one ray.  For $0<r<1$ and $\theta\in S^{n-1}$,
\begin{equation}\label{eq:origin-distances} \dF(0,r\theta)=-\log(1-r)\longrightarrow+\infty,\qquad \dF(r\theta,0)=\log(1+r)\le\log2. \end{equation}

For $n\ge1$ and $b>0$, define a probability measure on the Euclidean unit ball $\B^n$ by
\begin{equation}\label{eq:beta-measure} d\mu_{n,b}(x) \coloneqq\frac{\Gamma(n/2+b)}{\pi^{n/2}\Gamma(b)}(1-|x|^2)^{b-1}\,dx. \end{equation}
Given a positive sequence $(\beta_n)$, put
\[ \BetaBall\coloneqq(\B^n,\dF,\mu_{n,\beta_n}). \]
The upright tag $\mathrm F$ records the Funk distance.
A quasi-metric measure space (qm-space) carries a directed distance whose max symmetrization is a complete separable metric, together with a full-support Borel probability measure.  For a qm-space $X$, write $\Pyr(X)$ for the Box-closed family of its exact factors.  Weak convergence of pyramids means sequential Painlev\'e--Kuratowski convergence, also called weak Hausdorff convergence in mm-space theory.  The exact potential makes the natural scales depend on whether $\beta_n$ diverges, approaches a positive constant, or tends to zero.

The symmetrization $\dF^{\mathrm s}(x,y)=\dK(x,y)+|\phi(y)-\phi(x)|$ replaces the signed increment by its absolute value, so it forgets which endpoint carries the potential and cannot distinguish the three limits below.  Comparison geometry and analysis on irreversible metric spaces are developed in \cite{ohta2021comparison,kristaly-zhao2022geometry,kristaly-ohta-zhao2025analysis}.

The three target pyramids are denoted by $\mathcal G_q$, $\mathcal H_\beta$, and $\mathcal S_{\rhd}$ for the directed Gaussian, Funk horocone, and directed star cases, respectively.  The fixed-parameter horocone carries the chi height law $\chi_{2\beta}$.  The main theorem, \Cref{thm:main}, gives the following weak limits.  If $\beta_n\to+\infty$ and $n/(n+2\beta_n)\to q^2$ for some $q\in[0,1]$, then $\Pyr(\sqrt{2\beta_n}\,\BetaBall)\to\mathcal G_q$.  If $\beta_n\to\beta\in(0,+\infty)$, then $\Pyr(\BetaBall)\to\mathcal H_\beta$.  If $\beta_n\to0$, then $\Pyr(2\beta_n\BetaBall)\to\mathcal S_{\rhd}$, without any assumption on $\beta_n\log n$.  \Cref{tab:phase-mechanism} records the scales and geometric components of these limits.

\begin{table}[ht]
\centering
\small
\setlength{\tabcolsep}{2pt}
\renewcommand{\arraystretch}{1.15}
\begin{tabular}{@{}p{0.18\textwidth}p{0.10\textwidth}p{0.14\textwidth}p{0.13\textwidth}p{0.15\textwidth}p{0.17\textwidth}@{}}
\toprule
\raggedright Regime & \raggedright Natural scale & \raggedright Radial survival & \raggedright Angular limit & \raggedright Endpoint potential & \raggedright Target \tabularnewline
\midrule
\raggedright $\beta_n\to+\infty$, $n/(n+2\beta_n)\to q^2$
& \raggedright $\sqrt{2\beta_n}$
& \raggedright Gaussian fluctuation
& \raggedright Gaussian
& \raggedright linear radial potential
& \raggedright directed Gaussian $\mathcal G_q$ \tabularnewline
\raggedright $\beta_n\to\beta\in(0,+\infty)$
& \raggedright $1$
& \raggedright chi height $\chi_{2\beta}$
& \raggedright Gaussian
& \raggedright full log-height potential
& \raggedright Funk horocone $\mathcal H_\beta$ \tabularnewline
\raggedright $\beta_n\to0$; no condition on $\beta_n\log n$
& \raggedright $2\beta_n$
& \raggedright exponential log-height
& \raggedright discrete rays
& \raggedright critical cancellation
& \raggedright directed star $\mathcal S_{\rhd}$ \tabularnewline
\bottomrule
\end{tabular}
\caption{The common radial--angular--potential mechanism in the three phases.}
\label{tab:phase-mechanism}
\end{table}

Under the radial representation of \Cref{thm:gamma-master,thm:horospherical-transfer}, height renormalization removes the dimensional logarithmic term and leaves the regime-dependent radial component.  At the small-beta scale, this surviving component has an exponential limit.

The truncated-kernel, horosphere, and finite-profile comparisons establish the three clauses in this order, after which bounded-measurement invariants identify the parameters and separate the three targets.

\section{Preliminaries}\label{sec:preliminaries}

\subsection{Upstream dependencies}

The directed compactification used in the limit arguments is inherited from the upstream theory.  We record the precise notions and stability results needed here together with their sources.  Two further tools from \cite{one-sided-pyramids}, an approximate one-sided extension and a high-mass domination principle, are used in the transfer arguments collected in \Cref{app:transfer-tools}, where their precise forms and sources are recorded.

\begin{definition*}[Qm-spaces {\cite[Definition~2.2]{gds3-ja}}]
A triple $(X,d_X,\mu_X)$, or simply $X$, is a \emph{qm-space} if $d_X\colon X\times X\to[0,+\infty)$ satisfies $d_X(x,x)=0$ and the triangle inequality, its max symmetrization
\[ d_X^{\mathrm s}(x,y)\coloneqq\max\{d_X(x,y),d_X(y,x)\} \]
is a complete separable metric, and $\mu_X$ is a Borel probability measure on $(X,d_X^{\mathrm s})$ with full support.  The term qm-space abbreviates quasi-metric measure space.
\end{definition*}

\begin{definition*}[Isomorphism of qm-spaces {\cite[Definition~2.3]{gds3-ja}}]
Two qm-spaces are isomorphic if a measure-preserving bijection between them preserves the directed distance in both variables.  All collections of qm-spaces below consist of isomorphism classes.
\end{definition*}

\begin{definition*}[One-sided observables {\cite[Definition~2.5]{gds3-ja}}]
The one-sided observables of $X$ are
\[ \Lipplus(X,d_X)\coloneqq\{f\colon X\to\R\mid f(y)-f(x)\le d_X(x,y)\text{ for all }x,y\in X\}. \]
\end{definition*}
This is the semi-Lipschitz function class studied by Romaguera and Sanchis \cite{romaguera2000semi} in the quasi-metric setting.

\begin{definition*}[Exact domination {\cite[Definition in \S2.1]{gds3-ja}}]
A Borel map $p\colon X\to Y$ is an \emph{exact domination map} if $p_*\mu_X=\mu_Y$ and
\[ d_Y(p(x),p(y))\le d_X(x,y)\qquad(x,y\in X). \]
We write $Y\preceq X$ when such a map exists.  Stojmirovi\'c~\cite{stojmirovic2004quasi} calls a related structure a pq-space, but its axioms are not identical to those used here.
\end{definition*}

For a closed set $S\subset X\times Y$ and real-valued functions $h,k$ on $X\times Y$, put
\[ d_\infty^S(h,k)\coloneqq\sup_{(x,y)\in S}|h(x,y)-k(x,y)|, \]
with value zero when $S$ is empty, and let $(d_\infty^S)_H$ be the induced Hausdorff pseudometric on function families.  Write $\mathcal T(\mu_X,\mu_Y)$ for the set of couplings of $\mu_X$ and $\mu_Y$.

\begin{definition*}[Box distance {\cite[Definitions~2.7 and~2.11]{gds3-ja}}]
The \emph{Box distance} between qm-spaces is
\begin{equation}\label{eq:qm-box-distance}
\begin{aligned}
\Box(X,Y)\coloneqq\inf\bigl\{&\max\{1-\pi(S),2(d_\infty^S)_H(\Lipplus(X,d_X)\circ\operatorname{pr}_1,\Lipplus(Y,d_Y)\circ\operatorname{pr}_2)\}\ \bigm|\\
&\pi\in\mathcal T(\mu_X,\mu_Y),\ S\subset X\times Y\text{ is closed}\bigr\}.
\end{aligned}
\end{equation}
This is the observable representation of the directed Box distance.
\end{definition*}

Exact domination is closed under this distance: if $Y_n\preceq X_n$, $\Box(X_n,X)\to0$, and $\Box(Y_n,Y)\to0$, then
\begin{equation}\label{eq:domination-box-closedness} Y\preceq X. \end{equation}
Indeed, this is the qm-space specialization of the additive-error stability and distortion identification in \cite[Definition~3.1 and Propositions~3.4, 3.5, and~3.8]{one-sided-pyramids}.

\begin{definition*}[Pyramids {\cite[Definition~A.2]{gds3-ja}}]
A nonempty set $\mathcal P$ of qm-spaces is a \emph{pyramid} if it is downward closed under $\preceq$, every two members have a common dominator in $\mathcal P$, and $\mathcal P$ is closed with respect to $\Box$.  For a qm-space $X$, its associated pyramid \cite[Proposition~A.9]{gds3-ja} is
\[ \Pyr(X)\coloneqq\{Y\colon Y\preceq X\}. \]
It is a pyramid: nonemptiness, downward closedness, and directedness follow from the definition, while Box closedness follows from \eqref{eq:domination-box-closedness}.
\end{definition*}

\begin{definition*}[Weak convergence of pyramids {\cite[Definition~A.3, Theorem~5.1, and Corollary~5.3]{gds3-ja}}]
A sequence of pyramids $\mathcal P_n$ \emph{converges weakly} to a pyramid $\mathcal P$ if it converges as a sequence of closed subsets of the Box-distance space in the sense of sequential Painlev\'e--Kuratowski convergence, also called weak Hausdorff convergence in mm-space theory.  Explicitly,
\begin{align}
\Box(A,\mathcal P_n)&\longrightarrow0 &&(A\in\mathcal P), \label{eq:weak-pyramid-inner}\\
\liminf_{n\to\infty}\Box(A,\mathcal P_n)&>0 &&(A\notin\mathcal P). \label{eq:weak-pyramid-outer}
\end{align}
Equivalently, \eqref{eq:weak-pyramid-outer} says that whenever $A_k\in\mathcal P_{n(k)}$ and $A_k\to A$ in Box distance along a subsequence, then $A\in\mathcal P$.  Every sequence of qm-space pyramids has a subsequence converging weakly to a pyramid.
\end{definition*}

The symbol $cX$ means that the directed distance of $X$ is multiplied by $c>0$.

For $N\ge1$ and $u,v\in\R^N$, put
\begin{equation}\label{eq:directed-cube-distance} d_N^+(u,v)\coloneqq\max_{1\le j\le N}(v_j-u_j)_+. \end{equation}
\begin{definition*}[Bounded measurements {\cite[Definition~4.1]{gds3-ja}}]
For $R>0$, the \emph{bounded measurement set} $\mathcal M(X;N,R)$ consists of all $F_*\mu_X$ for Borel maps $F\colon X\to[-R,R]^N$ satisfying
\begin{equation}\label{eq:bounded-measurement-condition} d_N^+(F(x),F(y))\le d_X(x,y) \qquad(x,y\in X). \end{equation}
For a pyramid $\mathcal P$, define
\begin{equation}\label{eq:pyramid-bounded-measurement} \mathcal M(\mathcal P;N,R)\coloneqq\bigcup_{X\in\mathcal P}\mathcal M(X;N,R). \end{equation}
This set is compact in the Prokhorov distance $d_{\mathrm P}$ induced by the $\ell^\infty$ metric on $[-R,R]^N$.
\end{definition*}

\begin{definition*}[Directed coordinate models]
If $\nu$ is a probability measure on this cube, put
\[ \mathsf B_N^+(\nu)\coloneqq(\supp\nu,d_N^+,\nu). \]
Then
\begin{equation}\label{eq:coordinate-model-membership} \nu\in\mathcal M(\mathcal P;N,R)\quad\Longleftrightarrow\quad\mathsf B_N^+(\nu)\in\mathcal P, \end{equation}
because a measurement map is an exact domination map onto its support and the coordinate projections of $\mathsf B_N^+(\nu)$ recover $\nu$.  Moreover, the qm-space specialization of \cite[Lemma~B.1]{gds3-ja} gives
\begin{equation}\label{eq:coordinate-model-box} \Box(\mathsf B_N^+(\mu),\mathsf B_N^+(\nu))\le2d_{\mathrm P}(\mu,\nu). \end{equation}
The factor two follows by coupling $\mu$ and $\nu$ so that, outside mass at most $\eps$, paired cube points have $\ell^\infty$ distance at most $\eps$; the induced relation has directed-distance distortion at most $2\eps$.
\end{definition*}

Bounded measurements are monotone under exact domination and stable under Box approximation:
\begin{equation}\label{eq:measurement-box-stability}
\begin{aligned}
Y\preceq X&\Longrightarrow\mathcal M(Y;N,R)\subset\mathcal M(X;N,R),\\
(d_{\mathrm P})_H(\mathcal M(X;N,R),\mathcal M(Y;N,R))&\le\Box(X,Y).
\end{aligned}
\end{equation}
The first assertion follows by pulling measurements back along a domination map.  The second is the two-sided consequence of the finite-measurement inclusion in \cite[Proposition~3.9]{one-sided-pyramids}; see also \cite[Lemma~B.2]{gds3-ja}.

For pyramids $\mathcal P_n,\mathcal P$, weak convergence $\mathcal P_n\to\mathcal P$ is equivalent to the following condition for every $N\ge1$ and $R>0$:
\begin{equation}\label{eq:measurement-convergence-criterion} (d_{\mathrm P})_H(\mathcal M(\mathcal P_n;N,R),\mathcal M(\mathcal P;N,R))\longrightarrow0. \end{equation}
For a qm-space $X$, the corresponding membership criterion says that $X\in\mathcal P$ is equivalent to
\begin{equation}\label{eq:measurement-membership-criterion} \mathcal M(X;N,R)\subset\mathcal M(\mathcal P;N,R) \end{equation}
for every $N\ge1$ and $R>0$.
These facts, including the compactness after \eqref{eq:pyramid-bounded-measurement}, are the qm-space specializations of \cite[Theorem~4.4, Theorem~B.5, and Proposition~B.6]{gds3-ja} and \cite[Lemmas~3.12 and~3.14]{one-sided-pyramids}.

\subsection{Standing notation}

The notation fixes the finite-dimensional geometry and radial variables used in the limit arguments.

For $x\in\B^n$ and $v\in\R^n$, define the Klein and forward Funk norms by
\begin{align}
 \alpha_x(v)
 &\coloneqq\frac{\sqrt{(1-|x|^2)|v|^2+\langle x,v\rangle^2}}{1-|x|^2},
 \label{eq:klein-norm}\\
 F_x(v)
 &\coloneqq\frac{\sqrt{(1-|x|^2)|v|^2+\langle x,v\rangle^2}+\langle x,v\rangle}{1-|x|^2}.
 \label{eq:funk-norm}
\end{align}
Their distances are denoted by $\dK$ and $\dF$.
For each $n$, let
\[ U_n\sim\Gamma(n/2,1), \qquad V_n\sim\Gamma(\beta_n,1), \qquad \Theta_n\sim\sigma_{n-1} \]
be independent, and put
\[ \mathbf X_n\coloneqq\sqrt{\frac{U_n}{U_n+V_n}}\,\Theta_n, \qquad \mathbf R_n\coloneqq\artanh|\mathbf X_n|. \]

\subsection{Exact finite-dimensional geometry}

\begin{theorem}[Exact potential and finite-dimensional structure]\label{thm:exact-finite-geometry}
For all $x,y\in\B^n$,
\begin{equation}\label{eq:dual-norm} \lVert(d\phi)_x\rVert_{\alpha^*}=|x|<1 \end{equation}
and
\begin{equation}\label{eq:distance-exact} \dF(x,y)=\dK(x,y)+\phi(y)-\phi(x). \end{equation}
Moreover,
\begin{equation}\label{eq:symmetrization} \dF^{\mathrm s}(x,y)=\dK(x,y)+|\phi(y)-\phi(x)|. \end{equation}
For every $b>0$, the triple $(\B^n,\dF,\mu_{n,b})$ is a qm-space, and
\begin{equation}\label{eq:affine-lip} \Lipplus(\B^n,\dF) =\phi+\operatorname{Lip}_1(\B^n,\dK). \end{equation}
\end{theorem}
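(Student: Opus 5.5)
The plan is to reduce everything to one pointwise identity between the Finsler norms \eqref{eq:klein-norm} and \eqref{eq:funk-norm}. Comparing the two formulas directly gives
\[ F_x(v)=\alpha_x(v)+\frac{\langle x,v\rangle}{1-|x|^2}=\alpha_x(v)+(d\phi)_x(v), \]
because $\phi(x)=-\tfrac12\log(1-|x|^2)$ has differential $(d\phi)_x(v)=\langle x,v\rangle/(1-|x|^2)$. So the Funk norm is the Klein norm plus an exact $1$-form, and every other claim of \Cref{thm:exact-finite-geometry} should follow from this identity.

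\emph{Dual norm \eqref{eq:dual-norm}.} Put $r=|x|$ and split $v=t\,x/|x|+w$ with $w\perp x$ (for $x=0$ both sides vanish). Then
\[ \alpha_x(v)^2=\frac{t^2}{(1-r^2)^2}+\frac{|w|^2}{1-r^2},\qquad (d\phi)_x(v)=\frac{rt}{1-r^2}. \]
Hence $|(d\phi)_x(v)|\le r\,\alpha_x(v)$, with equality at $w=0$. This gives $\lVert(d\phi)_x\rVert_{\alpha^*}=|x|<1$.

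\emph{Distance identity \eqref{eq:distance-exact}.} I take $\dF$ and $\dK$ to be the path distances of $F$ and $\alpha$. If the paper instead defines them by the classical cross-ratio formulas, I first check that these coincide with the path metrics via straight segments; this identification is the one point I expect to need care. For any piecewise $C^1$ curve $\gamma$ from $x$ to $y$, integrate the norm identity:
\[ \int F(\dot\gamma)=\int\alpha(\dot\gamma)+\phi(y)-\phi(x). \]
The boundary term does not depend on $\gamma$, so taking the infimum over curves gives \eqref{eq:distance-exact}. The symmetrization \eqref{eq:symmetrization} then follows from $\max\{\dK+\Delta,\dK-\Delta\}=\dK+|\Delta|$ and the symmetry of $\dK$.

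\emph{Qm-space structure.} The triangle inequality and $\dF(x,x)=0$ are inherited from the path metric. For $x\ne y$, let $\gamma$ be the Klein geodesic, i.e.\ the segment $[x,y]$. This segment is compact in $\B^n$, so $\rho\coloneqq\max_{[x,y]}|z|<1$. Integrating \eqref{eq:dual-norm} along it gives
\[ |\phi(y)-\phi(x)|\le\rho\,\dK(x,y), \]
and therefore $\dF(x,y)\ge(1-\rho)\dK(x,y)>0$.

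For the symmetrization, $\dF^{\mathrm s}$ is a metric squeezed between $\dK$ and $\dK+|\phi(\cdot)-\phi(\cdot)|$. Since $\phi$ is continuous, $\dF^{\mathrm s}$ induces the Euclidean topology, so it is separable. For completeness, a $\dF^{\mathrm s}$-Cauchy sequence is $\dK$-Cauchy. The Klein (Hilbert) metric is complete, and $\dK$-bounded sets stay in a compact sub-ball, so the sequence converges in $\B^n$ and hence in $\dF^{\mathrm s}$.

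For the measure, the density of $\mu_{n,b}$ is positive and continuous, which gives full support. The normalization is the standard beta integral in polar coordinates, $\int_0^1 r^{n-1}(1-r^2)^{b-1}\,dr$.

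\emph{Description of $\Lipplus$, \eqref{eq:affine-lip}.} Write $f=\phi+g$. The condition $f(y)-f(x)\le\dF(x,y)$ becomes, by \eqref{eq:distance-exact}, exactly $g(y)-g(x)\le\dK(x,y)$ for all $x,y$. Because $\dK$ is symmetric, this is equivalent to $g$ being $1$-Lipschitz for $\dK$, which proves \eqref{eq:affine-lip}.

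I expect the only nontrivial step to be fixing the definition of $\dF$ and $\dK$, namely that they are the length metrics of $F$ and $\alpha$. Once that is settled, everything else is a direct consequence of the exactness of the $1$-form $F-\alpha=d\phi$.
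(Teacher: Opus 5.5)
Your proof is correct and follows the paper's argument: the norm identity $F_x=\alpha_x+(d\phi)_x$ integrated along curves gives the distance identity, compactness of the Klein segment gives positivity, Klein completeness handles the symmetrization, and shifting by the potential characterizes $\Lipplus$. The only difference is cosmetic: you compute the dual norm by splitting $v$ into components parallel and orthogonal to $x$, whereas the paper evaluates the covector $x/(1-|x|^2)$ against the inverse Klein metric tensor $G_x^{-1}$.
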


\begin{proof}
Since
\[ (d\phi)_x(v)=\frac{\langle x,v\rangle}{1-|x|^2}, \]
the two norms in \eqref{eq:klein-norm} and \eqref{eq:funk-norm} satisfy $F_x=\alpha_x+(d\phi)_x$.  The Klein metric tensor and its inverse are
\[ G_x=\frac{(1-|x|^2)I+xx^{\mathsf T}}{(1-|x|^2)^2}, \qquad G_x^{-1}=(1-|x|^2)(I-xx^{\mathsf T}). \]
Substitution of the covector $x/(1-|x|^2)$ gives
\[ \lVert(d\phi)_x\rVert_{\alpha^*}^2 =\frac{x^{\mathsf T}(I-xx^{\mathsf T})x}{1-|x|^2} =|x|^2. \]
Integration of the exact one-form along curves proves \eqref{eq:distance-exact}.  Interchanging the endpoints gives \eqref{eq:symmetrization}.

The endpoint increment in \eqref{eq:distance-exact} telescopes, so the triangle inequality follows from the Klein triangle inequality.  If $x\ne y$, the Klein geodesic segment joining them is compactly contained in $\B^n$.  Equation \eqref{eq:dual-norm} is therefore bounded there by a constant smaller than one, and $\dF(x,y)>0$.  A sequence which is Cauchy for \eqref{eq:symmetrization} is Klein Cauchy.  Completeness of Klein hyperbolic space gives a limit in $\B^n$, and continuity of $\phi$ gives convergence for $\dF^{\mathrm s}$.  Separability follows from the Klein topology, while the density of $\mu_{n,b}$ is positive throughout $\B^n$.

Finally, $f(y)-f(x)\le\dF(x,y)$ is equivalent by \eqref{eq:distance-exact} to
\[ (f-\phi)(y)-(f-\phi)(x)\le\dK(x,y). \]
Since $\dK$ is symmetric, the last inequality is equivalent to the ordinary $1$-Lipschitz condition.  This proves \eqref{eq:affine-lip}.
	This completes the proof.
\end{proof}

The Klein distance is
\begin{equation}\label{eq:Klein-distance} \dK(x,y)=\arcosh\frac{1-\langle x,y\rangle}{\sqrt{(1-|x|^2)(1-|y|^2)}}, \end{equation}
and the two radial Funk distances are given in \eqref{eq:origin-distances}.

\subsection{The radial representation and master formula}

\begin{theorem}[Gamma representation and master distance formula]\label{thm:gamma-master}
\begin{equation}\label{eq:gamma-realization} \mathbf X_n\quad\text{has distribution }\mu_{n,\beta_n}. \end{equation}
Moreover,
\begin{align}
 \sinh^2\mathbf R_n&=\frac{U_n}{V_n},
 &\mathbf R_n&=\arsinh\sqrt{\frac{U_n}{V_n}},
 \label{eq:rho-gamma}\\
 \phi(\mathbf X_n)&=\log\cosh\mathbf R_n
 =\frac12\log\left(1+\frac{U_n}{V_n}\right).
 \label{eq:phi-gamma}
\end{align}
For $x=\tanh\rho\,\theta$ and $y=\tanh\sigma\,\eta$,
\begin{equation}\label{eq:polar-Klein} \cosh\dK(x,y) =\cosh(\rho-\sigma) +\frac12\sinh\rho\sinh\sigma\,|\theta-\eta|^2, \end{equation}
and
\begin{equation}\label{eq:master-Funk}
 \begin{split}
\dF(x,y) &=\arcosh\left( \cosh(\rho-\sigma) +\frac12\sinh\rho\sinh\sigma|\theta-\eta|^2 \right)\\
 &\quad+\log\cosh\sigma-\log\cosh\rho.
 \end{split}
\end{equation}
\end{theorem}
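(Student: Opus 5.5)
The proof has three independent parts: the distributional identity \eqref{eq:gamma-realization}, the algebraic identities \eqref{eq:rho-gamma}--\eqref{eq:phi-gamma}, and the polar distance formulas \eqref{eq:polar-Klein}--\eqref{eq:master-Funk}.

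\emph{Distribution of $\mathbf X_n$.} By rotation invariance of $\mu_{n,\beta_n}$ and of the law of $\Theta_n$, it suffices to show that $|\mathbf X_n|^2$ has the law of $|x|^2$ under $\mu_{n,\beta_n}$, and that $|\mathbf X_n|$ is independent of $\Theta_n$. Independence holds by construction, since $|\mathbf X_n|$ is a function of $(U_n,V_n)$ only.

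For the radial law, pass to polar coordinates in \eqref{eq:beta-measure}. Setting $t=r^2$, the density of $t$ is proportional to $t^{n/2-1}(1-t)^{\beta_n-1}$, i.e.\ $t\sim\mathrm{Beta}(n/2,\beta_n)$. The normalizing constant matches, because $|S^{n-1}|=2\pi^{n/2}/\Gamma(n/2)$. On the other side, the classical gamma--beta relation gives $U_n/(U_n+V_n)\sim\mathrm{Beta}(n/2,\beta_n)$ for independent gammas. Since the uniform measure on the ball-with-density is the product of the radial law and $\sigma_{n-1}$, this proves \eqref{eq:gamma-realization}.

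\emph{Radial identities.} Write $s=|\mathbf X_n|=\sqrt{U_n/(U_n+V_n)}$ and $\mathbf R_n=\artanh s$. Then
\[
\sinh^2\mathbf R_n=\frac{s^2}{1-s^2}=\frac{U_n}{V_n},
\]
which gives \eqref{eq:rho-gamma}. Similarly,
\[
\phi(\mathbf X_n)=-\tfrac12\log(1-s^2)=\log\cosh\mathbf R_n,
\]
since $1-\tanh^2=\cosh^{-2}$. Combined with $\cosh^2=1+\sinh^2$, this yields \eqref{eq:phi-gamma}.

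\emph{Polar Klein formula.} Substitute $x=\tanh\rho\,\theta$ and $y=\tanh\sigma\,\eta$ into \eqref{eq:Klein-distance}. Multiplying numerator and denominator by $\cosh\rho\cosh\sigma$ gives
\[
\cosh\dK=\cosh\rho\cosh\sigma-\sinh\rho\sinh\sigma\,\langle\theta,\eta\rangle.
\]
Now use $\langle\theta,\eta\rangle=1-\tfrac12|\theta-\eta|^2$ and $\cosh\rho\cosh\sigma-\sinh\rho\sinh\sigma=\cosh(\rho-\sigma)$; this gives \eqref{eq:polar-Klein}.

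\emph{Master Funk formula.} Combine \eqref{eq:polar-Klein} with \eqref{eq:distance-exact} from \Cref{thm:exact-finite-geometry}, using $\phi(\tanh\rho\,\theta)=\log\cosh\rho$. This yields \eqref{eq:master-Funk}.

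\emph{Main obstacle.} The only step needing real care is the distributional identity, specifically confirming that the normalization constant in \eqref{eq:beta-measure} is exactly the Beta normalization after the polar change of variables. Concretely, one checks that
\[
\frac{\Gamma(n/2+b)}{\pi^{n/2}\Gamma(b)}\cdot\frac{2\pi^{n/2}}{\Gamma(n/2)}\cdot\frac12
=\frac{1}{B(n/2,b)}.
\]
This holds, so the computation closes.
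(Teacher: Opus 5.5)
Your proposal is correct and follows the paper's proof step for step. It matches the Beta$(n/2,\beta_n)$ law of $U_n/(U_n+V_n)$ against polar integration of \eqref{eq:beta-measure}, derives the radial identities from $|\mathbf X_n|=\tanh\mathbf R_n$, substitutes into \eqref{eq:Klein-distance}, and adds the endpoint increment from \eqref{eq:distance-exact}. Your explicit normalizing-constant check and the identity $\langle\theta,\eta\rangle=1-\frac12|\theta-\eta|^2$ simply fill in details the paper leaves implicit.
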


\begin{proof}
The variable $U_n/(U_n+V_n)$ has the beta distribution with parameters $n/2$ and $\beta_n$.  Polar integration in \eqref{eq:beta-measure} gives the same distribution for $|X|^2$ under $\mu_{n,\beta_n}$, independently of the uniform angular variable.  This proves \eqref{eq:gamma-realization}.  The identities in \eqref{eq:rho-gamma} and \eqref{eq:phi-gamma} follow from $|\mathbf X_n|=\tanh\mathbf R_n$.  Formula \eqref{eq:polar-Klein} is obtained by substituting $x=\tanh\rho\,\theta$ and $y=\tanh\sigma\,\eta$ into \eqref{eq:Klein-distance}.  Adding the endpoint increment in \eqref{eq:distance-exact} proves \eqref{eq:master-Funk}.
This completes the proof.
\end{proof}

\subsection{The three limit pyramids}

\begin{definition}[Directed Gaussian pyramid]\label{def:directed-gaussian}
For $q\in[0,1]$ and $k\ge0$, let
\[ G_q^k\coloneqq\bigl(\R\times\R^k,D_q,\gamma_{1/2}\otimes\gamma^k\bigr), \]
where $\gamma_{1/2}=N(0,1/2)$, $\gamma^k=N(0,I_k)$, and
\begin{equation}\label{eq:Dq} D_q((u,z),(v,z'))\coloneqq\sqrt{(u-v)^2+|z-z'|^2}+q(v-u). \end{equation}
Define
\begin{equation}\label{eq:Gq-pyramid} \mathcal G_q\coloneqq\overline{\bigcup_{k\ge0}\Pyr(G_q^k)}^{\,\Box}. \end{equation}
\end{definition}

\begin{definition}[Gaussian--chi Funk horocone pyramid]\label{def:horocone}
For $\beta>0$ and $k\ge0$, let
\[ H_\beta^k\coloneqq\bigl(\R^k\times(0,+\infty),D^{\mathrm{hor}},\gamma^k\otimes\chi_{2\beta}\bigr), \]
where $\chi_{2\beta}$ has density
\[ \frac{2^{1-\beta}}{\Gamma(\beta)}y^{2\beta-1}e^{-y^2/2}\,dy \qquad(y>0) \]
and
\begin{equation}\label{eq:horocone-distance} D^{\mathrm{hor}}((z,y),(z',y'))\coloneqq\arcosh\frac{|z-z'|^2+y^2+y'^2}{2yy'}+\log\frac{y}{y'}. \end{equation}
Define
\begin{equation}\label{eq:Hbeta-pyramid} \mathcal H_\beta\coloneqq\overline{\bigcup_{k\ge0}\Pyr(H_\beta^k)}^{\,\Box}. \end{equation}
\end{definition}

\begin{theorem}[Gaussian and horocone families are pyramids]\label{thm:gaussian-horocone-pyramids}
For every $q\in[0,1]$ and $\beta>0$, the families $(G_q^k)_{k\ge0}$ and $(H_\beta^k)_{k\ge0}$ are directed under exact domination.  Consequently, $\mathcal G_q$ and $\mathcal H_\beta$ defined by \eqref{eq:Gq-pyramid} and \eqref{eq:Hbeta-pyramid} are pyramids.
\end{theorem}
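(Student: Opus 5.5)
The plan is to show that the families are in fact chains: $G_q^k\preceq G_q^{k+1}$ and $H_\beta^k\preceq H_\beta^{k+1}$ for every $k$. Any two members then have a common dominator, namely the one with the larger index. The natural candidate for the domination map is the coordinate projection that forgets the last Gaussian coordinate:
\[ (u,z_1,\dots,z_{k+1})\mapsto(u,z_1,\dots,z_k),\qquad (z_1,\dots,z_{k+1},y)\mapsto(z_1,\dots,z_k,y). \]
This map pushes $\gamma_{1/2}\otimes\gamma^{k+1}$ to $\gamma_{1/2}\otimes\gamma^k$ and $\gamma^{k+1}\otimes\chi_{2\beta}$ to $\gamma^k\otimes\chi_{2\beta}$, because the standard Gaussian is a product measure.

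For the distance inequality, the signed terms $q(v-u)$ and $\log(y/y')$ do not involve $z$, so they are unchanged by the projection. The remaining terms $\sqrt{(u-v)^2+|z-z'|^2}$ and $\arcosh\bigl((|z-z'|^2+y^2+y'^2)/(2yy')\bigr)$ are nondecreasing in $|z-z'|$, and deleting a coordinate can only decrease $|z-z'|$. Hence the projection is $1$-Lipschitz for the directed distance, which is exactly exact domination.

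Before this, each $G_q^k$ and $H_\beta^k$ must be checked to be a qm-space.

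For $G_q^k$, write $D_q$ as the Euclidean distance plus $q\,\ell$, where $\ell(u,z)=u$ has gradient of norm one. Then:
\begin{itemize}
\item the triangle inequality holds because the linear increment telescopes;
\item $D_q\ge(1-q)|\cdot|\ge0$, so $D_q$ is nonnegative;
\item the symmetrization is $|\cdot|+q|u-v|$, which is bi-Lipschitz to the Euclidean distance, hence complete and separable;
\item the Gaussian measure has full support.
\end{itemize}
At $q=1$, $D_1$ vanishes for $z=z'$ and $v\le u$. This is harmless, since the definition of a qm-space only requires the max symmetrization to be a metric.

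For $H_\beta^k$, the first term of $D^{\mathrm{hor}}$ is the hyperbolic distance of the upper half-space model $\R^k\times(0,\infty)$. The term $\log(y/y')=\psi(y')-\psi(y)$ with $\psi=-\log y$ is again an exact increment. Its gradient has hyperbolic norm $|y\cdot(1/y)|=1$, so $D^{\mathrm{hor}}\ge0$ by the same $1$-Lipschitz argument. The triangle inequality follows from telescoping. The symmetrization is $d_{\mathbb H}+|\log(y/y')|\le 2d_{\mathbb H}$ and is at least $d_{\mathbb H}$, so it is complete and separable because hyperbolic space is. The measure $\gamma^k\otimes\chi_{2\beta}$ has positive density, hence full support. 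The map is Borel since it is continuous.

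Finally, the union $\bigcup_k\Pyr(G_q^k)$ is increasing, so it is directed and downward closed. Its $\Box$-closure stays downward closed and directed by \eqref{eq:domination-box-closedness}, via the following standard step. Take $A,B$ in the closure, approximate them by $A_j,B_j$ in some $\Pyr(G^{k_j})$, and choose common dominators $C_j$ in $\Pyr(G^{k_j})$. The delicate point is extracting a Box limit of $C_j$. I expect this to be the main obstacle, since the space of qm-spaces is not compact.

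I would first try to avoid the compactness issue entirely by invoking the equivalent measurement criterion \eqref{eq:measurement-membership-criterion}. Define the closure's bounded measurement sets as the closures of increasing unions. These are nested and compact, and directedness reduces to the observation that a pair of measurements $(\mathcal M(A),\mathcal M(B))$ combines into a joint measurement in $\mathcal M(G^{k};2N,R)$ up to small Prokhorov error. Failing that, I would simply cite that the $\Box$-closure of an increasing union of pyramids is a pyramid. This is the standard fact from the pyramid compactification in \cite{gds3-ja}: it is the weak limit of the increasing sequence $\Pyr(G_q^k)$, and weak limits of pyramids are pyramids.
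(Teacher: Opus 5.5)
Your proposal is correct, and its first half matches the paper's argument. The projections that forget the last coordinate of $z$ give the chains $G_q^k\preceq G_q^{k+1}$ and $H_\beta^k\preceq H_\beta^{k+1}$. Your check that each generator is a qm-space fills in a step the paper leaves implicit; for $H_\beta^k=\mathsf H(\R^k_\gamma,\chi_{2\beta})$ it is also covered by \Cref{thm:general-horocone}.

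The closure step is where you differ. The paper invokes \Cref{thm:directed-union-closure}, which works elementwise. For each Box-convergent sequence in the union, it chooses generators above the terms and extracts a weakly convergent subsequence of their principal pyramids. It then uses the outer and inner conditions of that limit to bring dominated spaces and common dominators back into the closure.

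You use the chain structure globally instead. Since $\Box(A,\Pyr(G_q^k))$ decreases to $\Box\bigl(A,\bigcup_k\Pyr(G_q^k)\bigr)$, the set $\mathcal G_q$ satisfies \eqref{eq:weak-pyramid-inner} and \eqref{eq:weak-pyramid-outer} along every subsequence, and likewise $\mathcal H_\beta$. By the compactness statement in the preliminaries, some subsequence has a weak limit pyramid $\mathcal Q$. The two conditions force $\mathcal Q=\mathcal G_q$. You should spell out this uniqueness step rather than saying ``weak limits of pyramids are pyramids''.

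With that step written out, your argument is shorter than the paper's. The paper's version can be reused for directed families that are not chains, such as the star generators in \Cref{thm:finite-star-pyramid} and the bases in \eqref{eq:joint-horocone-pyramid}. Both routes rely on the same compactness input from \cite{gds3-ja}.

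Drop two of your intermediate remarks, since only your fallback carries the proof. First, \eqref{eq:domination-box-closedness} by itself does not give downward closedness or directedness of the closure, because it does not produce the approximating dominated spaces or dominators. Second, the measurement route presupposes that the target is already a pyramid in \eqref{eq:measurement-membership-criterion}. A joint measurement of two members also does not by itself yield a qm-space dominating both.
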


\begin{proof}
The coordinate projections preserve the corresponding product measures and do not increase the directed distances.  Thus $G_q^k\preceq G_q^{k+1}$ and $H_\beta^k\preceq H_\beta^{k+1}$.  For any $k,k'\ge0$, the generators with index $\max\{k,k'\}$ dominate both generators with indices $k$ and $k'$.  The directed-union closure principle in \Cref{thm:directed-union-closure} now shows that \eqref{eq:Gq-pyramid} and \eqref{eq:Hbeta-pyramid} are pyramids.
This completes the proof.
\end{proof}

\begin{definition}[Critical Funk star pyramid]\label{def:critical-star}
Let $m\ge1$ and let $p_i>0$ with $\sum_{i=1}^mp_i=1$.  On $[0,+\infty)\times\{1,\ldots,m\}$ identify all points $(0,i)$ and put
\begin{equation}\label{eq:critical-star-distance} d_{\rhd}((s,i),(t,j))\coloneqq\begin{cases}2(t-s)_+,&i=j,\\2t,&i\ne j.\end{cases} \end{equation}
Equip this space with the measure $e^{-s}\,ds\otimes\sum_{i=1}^mp_i\delta_i$ and denote the resulting qm-space by $T_m^{\rhd}(p_1,\ldots,p_m)$.  Define
\begin{equation}\label{eq:critical-star-pyramid} \mathcal S_{\rhd}\coloneqq\overline{\bigcup_{m,p_1,\ldots,p_m}\Pyr(T_m^{\rhd}(p_1,\ldots,p_m))}^{\,\Box}, \end{equation}
where the union is over all such $m$ and weight vectors.
\end{definition}

\subsection{The complete phase diagram}

\begin{theorem}[Complete natural phase diagram]\label{thm:main}
The Funk beta balls have the following natural-scale limits.
\begin{enumerate}[label=\textup{(\roman*)}]
\item Suppose that $\beta_n\to+\infty$, put
\begin{equation}\label{eq:qn} q_n\coloneqq\sqrt{\frac{n}{n+2\beta_n}}, \end{equation}
and assume that $q_n\to q\in[0,1]$.  Then
\begin{equation}\label{eq:gaussian-limit} \Pyr(\sqrt{2\beta_n}\,\BetaBall)\longrightarrow\mathcal G_q. \end{equation}
\item If $\beta_n\to\beta\in(0,+\infty)$, then
\begin{equation}\label{eq:fixed-beta-limit} \Pyr(\BetaBall)\longrightarrow\mathcal H_\beta. \end{equation}
\item If $\beta_n\to0$, then, without any assumption on $\beta_n\log n$,
\begin{equation}\label{eq:small-beta-limit} \Pyr(2\beta_n\BetaBall)\longrightarrow\mathcal S_{\rhd}. \end{equation}
\end{enumerate}
\end{theorem}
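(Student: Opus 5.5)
Each clause will be checked through the measurement criterion \eqref{eq:measurement-convergence-criterion}. Since pyramids are compact under weak convergence, it suffices to prove two inclusions in each regime. The \emph{inner} one says that every generator of the target pyramid ($G_q^k$, $H_\beta^k$, or $T_m^{\rhd}$) is a Box limit of spaces $Y_n\preceq X_n$; by Box closedness this gives \eqref{eq:weak-pyramid-inner}. The \emph{outer} one says that any Box limit of $Y_n\preceq X_n$ along a subsequence has all its bounded measurements in the target, so the membership criterion \eqref{eq:measurement-membership-criterion} gives \eqref{eq:weak-pyramid-outer}. Here $X_n$ denotes the rescaled ball. In all three regimes I will work from the Gamma representation of \Cref{thm:gamma-master}, using the coordinates $(\mathbf R_n,\Theta_n)$ with $\sinh^2\mathbf R_n=U_n/V_n$ and the master formula \eqref{eq:master-Funk}. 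By \eqref{eq:affine-lip}, every one-sided observable is $\phi$ plus a Klein $1$-Lipschitz function, so measurements split into a symmetric Klein part and the explicit potential $\log\cosh\mathbf R_n$.

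\textbf{Regime (i).} The variables $U_n\approx n/2$ and $V_n\approx\beta_n$ concentrate, with fluctuations of order $\sqrt n$ and $\sqrt{\beta_n}$. I would linearize $\mathbf R_n$ around its typical value $\rho_n$. Multiplied by $\sqrt{2\beta_n}$, the radial fluctuation becomes the coordinate $u\sim N(0,1/2)$. The increment of $\log\cosh\mathbf R$ is then $\tanh\rho_n\,(\sigma-\rho)$, and a computation shows $\tanh\rho_n=\sqrt{U_n/(U_n+V_n)}\to q$, which gives the term $q(v-u)$. For the angular part, the first $k$ coordinates of $\sqrt n\,\Theta_n$ are asymptotically $\gamma^k$ by Poincaré's limit. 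With the factor $\sinh\rho_n$, the quantity $\sqrt{2\beta_n}\sinh\rho_n|\theta-\eta|$ turns the Klein term \eqref{eq:polar-Klein} into $\sqrt{(u-v)^2+|z-z'|^2}$ at the relevant scale. The inner inclusion comes from projecting onto finitely many angular coordinates and the radius. These projection maps satisfy the domination inequality only up to additive error, and the approximate one-sided extension from \Cref{app:transfer-tools} repairs this. The outer inclusion comes from Lévy-type concentration of Klein-Lipschitz functions on the high-dimensional sphere. Those functions depend asymptotically only on finitely many Gaussian directions, so every measurement is approximated by one from some $G_q^k$. This is the truncated-kernel comparison.

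\textbf{Regime (ii).} Here $V_n\to\Gamma(\beta)$ in law, while $U_n\sim n/2$. Set $y=\sqrt{2V_n}$, which converges to $\chi_{2\beta}$. Then $\cosh\mathbf R_n\approx\sqrt{n}/y$ up to constants, so the increment $\log\cosh\sigma-\log\cosh\rho$ tends to $\log(y/y')$, and the dimensional constant $\log\sqrt n$ cancels exactly. In the Klein term, the identity $\cosh(\rho-\sigma)+\tfrac12\sinh\rho\sinh\sigma|\theta-\eta|^2$ becomes $(y^2+y'^2+|z-z'|^2)/(2yy')$ once the angles are rescaled by $\sqrt n$. This is the upper half-space (horocone) model, so \eqref{eq:horocone-distance} emerges. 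The two inclusions then follow the same pattern as in (i), with the horospherical transfer \Cref{thm:horospherical-transfer} supplying the comparison. The measures match via the convergence $\sqrt{2V_n}\Rightarrow\chi_{2\beta}$.

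\textbf{Regime (iii).} As $\beta_n\to0$ the variable $V_n$ becomes tiny. I would set $s=-\beta_n\log V_n$ up to centering; since $V_n^{\beta_n}\to\mathrm{Unif}(0,1)$, the variable $s$ converges to $\mathrm{Exp}(1)$. The potential is $\phi\approx\frac12\log(U_n/V_n)$, so $2\beta_n\phi\approx s$ plus $\beta_n\log n$. The $\beta_n\log n$ term cancels in the increment whatever its size, which is the reason no rate condition is needed. The Klein part between distinct rays is about $\rho+\sigma$, and after scaling this gives $2\beta_n(\rho+\sigma)$ plus the increment. This yields $2t$ for distinct rays and $2(t-s)_+$ for the same ray, because there the Klein part is $|\rho-\sigma|$. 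Note that $s$ is essentially a one-point mass near $0$ plus its distribution; the radial measure concentrates on small $s$ only through the exponential law. The angular part is discretized into finitely many rays with weights $p_i$: a partition of the sphere gives the star $T_m^{\rhd}$ as an approximate factor, which is the inner inclusion. For the outer inclusion, a one-sided Lipschitz function at scale $2\beta_n$ must be constant on each small-$s$ region up to finite-profile error. The high-mass domination principle then shows that any measurement is carried by finitely many rays.

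\textbf{Main obstacle.} In all three regimes the hardest step is the outer inclusion, since it must control arbitrary one-sided observables rather than the explicit projections. I expect regime (iii) to be the worst, because the reverse distance degenerates there and angular concentration has to be replaced by a finite-profile argument.
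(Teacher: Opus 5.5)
Regime (iii) has a genuine gap, located exactly where the theorem makes its claim. Use your own normalization $2\beta_n\phi\approx s+\beta_n\log n$ together with a Klein part $\approx\rho+\sigma$ between distinct rays. This gives
\[
 2\beta_n\dK+2\beta_n\bigl(\phi(y)-\phi(x)\bigr)\approx(s+\beta_n\log n)+(t+\beta_n\log n)+(t-s)=2t+2\beta_n\log n,
\]
not $2t$. This is not only a bookkeeping slip. For typical, nearly orthogonal directions, \eqref{eq:polar-Klein} gives $\cosh\dK\approx\cosh\rho\cosh\sigma$. So typical cross-ray distances at scale $2\beta_n$ really are $2t+2\beta_n\log n+o(1)$, and they diverge when $\beta_n\log n\to\infty$.

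The cancellation in the potential increment that you cite is trivial and is not what removes the rate condition. The value $2t$ comes only from pairs at angular separation of order $n^{-1/2}$. Write $y=\sqrt{n(1-|x|^2)}$ and $r=\sqrt n|\theta-\eta|$. Then $\frac12\sinh\rho\sinh\sigma|\theta-\eta|^2\approx r^2/(2yy')$, so the two radial copies of $\beta_n\log n$ cancel against the angular factor $|\theta-\eta|^2=r^2/n$. Consequently $2\beta_n\Omega_{y,y'}(r)\to2t$ for $r$ in compact subsets of $(0,+\infty)$, with $y=e^{-s/(2\beta_n)}$ and $y'=e^{-t/(2\beta_n)}$. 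The horospherical coordinates you already used in (ii) perform this cancellation automatically, and they remain available in (iii) because $2\beta_n$ is bounded.

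For your lower inclusion, this means the angular blocks must be separated at scaled chord distance at least some $c>0$, as in \Cref{thm:spherical-blocks-nearby}(a). Monotonicity of the kernel in $r$ then gives the cross bound $2t-o(1)$. A bare partition fails near block boundaries, where the source distance drops to about $2(t-s)_+<2t$.

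Your sketch does not supply the outer inclusion in (iii). The high-mass domination principle is a lower-inclusion tool: it places a given space in every limit. Near-constancy near the vertex says nothing about cross-branch constraints at moderate heights. What is needed is the following chain.
\begin{enumerate}[label=\textup{(\arabic*)}]
\item The profiles $g_{n,\theta}(s)=F_n(\sqrt n\theta,e^{-s/(2\beta_n)})$ are coordinatewise nondecreasing and $2$-Lipschitz by the same-ray kernel, so they lie in a compact family.
\item A weak limit $\varpi$ of their laws therefore exists.
\item Distinct $g,h\in\supp\varpi$ must satisfy $d_N^+(g(s),h(t))\le2t$.
\item Finite quantization of $\varpi$ then gives a measurement of a finite star.
\end{enumerate}
Step (3) is obtained from angular concentration, not in place of it. By L\'evy's isoperimetric inequality, any two sets of measure at least $\alpha$ on $S^{n-1}$ contain points at scaled chord distance at most $C_\alpha$, and at bounded $r$ the cross kernel tends to $2t$. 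With your kernel, the generic pairs between two positive-measure sets would only give $2t+2\beta_n\log n$. That bound is vacuous when $\beta_n\log n\to\infty$.

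A lesser issue concerns (i) and (ii). Your stated reason for the outer inclusion is not right as written, since Lipschitz functions on $\sqrt nS^{n-1}$ do not asymptotically depend on finitely many coordinates. It can be repaired in either of two ways.
\begin{enumerate}[label=\textup{(\alph*)}]
\item Follow the paper's profile compactifications: marked-product transport in (i), and the horocone profile path metric with joint continuity in base and height law in (ii).
\item Exhibit the comparison spaces as near-factors of $G_q^n$ and $H_\beta^n$. Use radial projection $\R^n\to\sqrt nS^{n-1}$ and a quantile coupling of the radial or height laws, and remove the additive errors with \Cref{lem:one-sided-extension}.
\end{enumerate}
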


\section{General transfer of radial laws and endpoint potentials}\label{sec:general-transfer}

When the height law varies, convergence of the Gaussian base alone does not control the upper inclusion for the horocone,
while passage to the symmetric product loses the signed increment of the endpoint potential.  The following results transfer
both pieces of information to the pyramid limit through bounded measurements on compact intervals.

\subsection{Arbitrary radial laws and Funk horocones}

Write the finite-dimensional standard Gaussian base as
\begin{equation}\label{eq:Gaussian-base-space} \mathbb R_\gamma^k\coloneqq(\R^k,|\cdot|,\gamma^k). \end{equation}
For a Borel probability measure $\eta$ on $(0,+\infty)$, put
\begin{equation}\label{eq:general-height-generator}
 H_\eta^k\coloneqq\bigl(\R^k\times\supp\eta,D^{\mathrm{hor}},\gamma^k\otimes\eta\bigr)
\end{equation}
and define
\begin{equation}\label{eq:general-horocone-pyramid}
 \mathcal H_\eta\coloneqq\overline{\bigcup_{k\ge0}\Pyr(H_\eta^k)}^{\,\Box}.
\end{equation}
The distance in \Cref{eq:horocone-distance} does not depend on the height measure, so
\begin{equation}\label{eq:Hbeta-height-specialization} \mathcal H_\beta=\mathcal H_{\chi_{2\beta}}. \end{equation}
Indeed, the finite-dimensional generators on the two sides agree.

For an ordinary mm-space $Z$ and a Borel probability measure $\eta$ on $(0,+\infty)$, write
$\mathsf H(Z,\eta)$ for the Funk horocone defined in \Cref{eq:general-horocone-space}.  For an ordinary pyramid
$\mathcal P$, define
\begin{equation}\label{eq:joint-horocone-pyramid}
 \mathcal H(\mathcal P,\eta)
 \coloneqq\overline{\bigcup_{Z\in\mathcal P}
 \Pyr(\mathsf H(Z,\eta))}^{\,\Box}.
\end{equation}
This closure is an asymmetric pyramid.  Each principal pyramid in the union is nonempty and downward closed.  If
$Z_1,Z_2\in\mathcal P$, choose $W\in\mathcal P$ with $Z_1,Z_2\preceq W$.  Functoriality in
\Cref{thm:general-horocone} gives
$\mathsf H(Z_1,\eta),\mathsf H(Z_2,\eta)\preceq\mathsf H(W,\eta)$.  Thus the union before closure is directed, and
\Cref{thm:directed-union-closure} applies.

Let $\mu_n$ be a rotationally invariant probability measure on $\B^n$.  Write a random variable with law $\mu_n$ as
$X_n=R_n\Theta_n$, where $R_n\in[0,1)$ and $\Theta_n\sim\sigma_{n-1}$ are independent.  Associate with this radial law the height
\begin{equation}\label{eq:general-radial-height} \mathsf Y_n\coloneqq\sqrt{n(1-R_n^2)}. \end{equation}

\begin{theorem}[Horocone limit for general radial laws]\label{thm:radial-law-transfer}
Let $\eta$ be a probability measure with full support on $(0,+\infty)$, and suppose that
\begin{equation}\label{eq:height-convergence-assumption}
 (\mathsf Y_n)_*\mathbb P\Rightarrow\eta\qquad(n\to\infty).
\end{equation}
Then
\begin{equation}\label{eq:radial-law-pyramid-limit}
 \Pyr(\B^n,\dF,\mu_n)\longrightarrow\mathcal H_\eta\qquad(n\to\infty)
\end{equation}
weakly as asymmetric pyramids.
\end{theorem}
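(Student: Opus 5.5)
Via the measurement criterion \eqref{eq:measurement-convergence-criterion}, it suffices to prove two things for each $N\ge1$ and $R>0$. The \emph{lower inclusion} says that every $\nu\in\mathcal M(\mathcal H_\eta;N,R)$ is a $d_{\mathrm P}$-limit of measurements of $(\B^n,\dF,\mu_n)$. The \emph{upper inclusion} says that every limit of measurements $\nu_n\in\mathcal M(\B^n,\dF,\mu_n;N,R)$ lies in $\mathcal M(\mathcal H_\eta;N,R)$.

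The geometric input is a horospherical blow-up near the boundary point. Write $X_n=R_n\Theta_n$ and let $\Theta_n=(\xi_1,\dots,\xi_n)$. Set $z=\sqrt n(\xi_1,\dots,\xi_k)\in\R^k$ and keep the height $\mathsf Y_n=\sqrt{n(1-R_n^2)}$. By Poincar\'e--Borel, $z$ is asymptotically $\gamma^k$ and independent of $\mathsf Y_n$, so $(z,\mathsf Y_n)_*\mathbb P\Rightarrow\gamma^k\otimes\eta$ by \eqref{eq:height-convergence-assumption}.

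In the master formula \eqref{eq:master-Funk}, we have $\cosh^2\rho=1/(1-|x|^2)=n/\mathsf Y^2$. On the event where both heights lie in a compact subinterval of $(0,\infty)$, $\rho$ and $\sigma$ are large with $\rho-\sigma\to\log(y'/y)$. The potential increment $\log\cosh\sigma-\log\cosh\rho$ then becomes $\log(y/y')$ exactly. The angular term $\frac12\sinh\rho\sinh\sigma|\theta-\eta|^2\approx \frac{n}{2yy'}|\theta-\eta|^2$ is where the care is needed. The full distance $n|\theta-\eta|^2$ concentrates at $2n$ for independent points and is not just $|z-z'|^2$. So I would not compare the whole distance pointwise. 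Instead I would work with one-sided observables through \eqref{eq:affine-lip}: $\Lipplus(\dF)=\phi+\operatorname{Lip}_1(\dK)$, with $\phi(x)=\log\sqrt n-\log\mathsf Y$. After subtracting the common constant $\log\sqrt n$ (allowed since measurements are translation invariant up to shifting $R$), the potential part converges exactly. The Klein part reduces to the classical hyperbolic ball with a rotation-invariant measure in large dimension. There the known picture is that $1$-Lipschitz functions see only a Gaussian $k$-dimensional horospherical shadow, in the upper half-space model with coordinate $(z,y)$. In that model, $\arcosh\frac{|z-z'|^2+y^2+y'^2}{2yy'}$ is exactly the hyperbolic distance, which gives $D^{\mathrm{hor}}$.

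\textbf{Lower inclusion.} I would take $\nu=F_*(\gamma^k\otimes\eta)$ with $F$ satisfying \eqref{eq:bounded-measurement-condition} for $D^{\mathrm{hor}}$. First I would truncate the height to $[a,b]$, losing mass $\eps$. I would then use the approximate one-sided extension tool from \Cref{app:transfer-tools} to transport $F$ to the ball via the map $x\mapsto(z,\mathsf Y)$, checking that this map is $(1+o(1))$-contracting from $\dF$ to $D^{\mathrm{hor}}$ on the truncated region. The check uses the inequality $n|\theta-\eta|^2\ge|z-z'|^2$ together with monotonicity of $\arcosh$. The additive error is absorbed by \eqref{eq:coordinate-model-box}.

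\textbf{Upper inclusion.} I expect this to be the main obstacle, since the height law varies with $n$ and the Gaussian base alone is not enough. Given $F_n=\phi+g_n$-type coordinates with $g_n$ $1$-Lipschitz for $\dK$ and bounded in $[-R,R]^N$, I would restrict to the high-mass set where $\mathsf Y_n\in[a,b]$. On that set the Klein ball restricted to a horospherical band behaves like a product of a thin band with a sphere, and the sphere is of Lévy concentration type. Applying the classical hyperbolic/Gaussian observable limit there, each $g_n$ is approximately a function of $(z,y)$ for some $k$ (after rotation), and this function is $1$-Lipschitz for the hyperbolic distance up to $o(1)$. The high-mass domination principle of \Cref{app:transfer-tools} then places the limit measurement in $\mathcal M(\Pyr(H_\eta^k);N,R+\eps)$. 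The total error is $\eps$ plus the mass $\eta((0,a)\cup(b,\infty))$, and letting $a\to0$, $b\to\infty$ and using compactness and closedness of $\mathcal M(\mathcal H_\eta;N,R)$ concludes the argument.
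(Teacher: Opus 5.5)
Your lower inclusion is sound. Because $r\mapsto\Omega_{s,t}(r)$ is increasing and $|P_k(\sqrt n\theta)-P_k(\sqrt n\omega)|\le\sqrt n|\theta-\omega|$, \Cref{lem:general-horospherical-approximation} makes $\tanh\rho\,\theta\mapsto(P_k(\sqrt n\theta),\sqrt n/\cosh\rho)$ contracting up to an additive $\delta(L_n)\to0$ on a set of mass tending to one. The restricted pushforwards converge to $\gamma^k\otimes\eta$ by radial--angular independence. \Cref{lem:high-mass-domination} then puts $H_\eta^k$ in every subsequential limit; the additive error is absorbed by \Cref{lem:one-sided-extension} inside that lemma, not by \eqref{eq:coordinate-model-box}.

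The upper inclusion, however, has a genuine gap. Your claim that each $g_n$ is approximately a function of $(z,y)$, with $z$ a fixed number $k$ of rotated coordinates, is false as a statement about functions. L\'evy concentration and $\Pyr(\sqrt nS^{n-1})\to\mathcal G_0$ control only \emph{laws} of measurements. For example, $z\mapsto n^{-1/2}\sum_{i=1}^n|z_i|$ is $1$-Lipschitz on $\sqrt nS^{n-1}$, with order-one fluctuations that no boundedly many coordinates in any orthonormal frame capture. Read at the level of laws, the claim is still insufficient. A measurement of the horocone is a whole family $y\mapsto F_n(\cdot,y)$ coupled by the cross inequalities $d_N^+(F_n(z,s),F_n(z',t))\le\Omega_{s,t}(|z-z'|)$. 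These are not $\ell^\infty$-Lipschitz conditions on a vector of height slices. Applying the spherical limit slice by slice, or on a finite grid of heights, therefore discards exactly the coupling that defines $D^{\mathrm{hor}}$. Moreover, \Cref{lem:high-mass-domination} points the wrong way for this half: it places a given space in every limit pyramid and cannot show that a limiting measurement lies in $\mathcal M(\mathcal H_\eta;N,R)$. Also, $g_n=F_n-\phi$ is not bounded in $[-R,R]^N$; after your shift it equals $F_n+\log\mathsf Y_n$, which is bounded only on height windows.

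The missing idea is to make the entire vertical profile a single point of a compact metric space. The paper first compares the Funk ball with the horocone $\mathsf H(\sqrt nS^{n-1},\eta_n)$ over the \emph{full} chordal sphere, with error $\delta(L_n)$ from \Cref{lem:general-horospherical-approximation}. No projection is made, so your concern that $n|\theta-\omega|^2$ concentrates at $2n$ never arises. Then, on a height window $J$, each base point $z$ is sent to its profile $g_{n,z}=F_n(z,\cdot)|_{J^*}$ in the space $(\mathscr K,d_{\mathscr K})$ of \Cref{thm:profile-path-metric}. Its path metric is built from the least $r$ for which both cross inequalities hold.

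This profile map is $1$-Lipschitz, so its pushforward is an ordinary factor of the sphere and converges along subsequences to a base $Z_J\in\mathcal G_0$. The evaluation-cone inequality \eqref{eq:evaluation-cone-inequality} makes $(g,y)\mapsto g(y)$ one-sided $1$-Lipschitz on $\mathsf H(Z_J,\eta)\in\mathcal H(\mathcal G_0,\eta)=\mathcal H_\eta$. Submeasure passage under $\eta_n\Rightarrow\eta$, followed by exhaustion of $J$, gives the upper inclusion; this is the content of \Cref{thm:joint-horocone-pyramid-continuity}. The limiting base is a general member of $\mathcal G_0$ rather than $\gamma^k$ for a fixed $k$, which is why the identification $\mathcal H(\mathcal G_0,\eta)=\mathcal H_\eta$ is also needed.
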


\begin{theorem}[Continuity of horocone-pyramid formation]\label{thm:height-law-pyramid-continuity}
Let $\eta_j$ and $\eta$ be probability measures with full support on $(0,+\infty)$, and suppose that
$\eta_j\Rightarrow\eta$ as $j\to\infty$.  Then
\begin{equation}\label{eq:height-law-pyramid-continuity}
 \mathcal H_{\eta_j}\longrightarrow\mathcal H_\eta\qquad(j\to\infty)
\end{equation}
weakly as asymmetric pyramids.
\end{theorem}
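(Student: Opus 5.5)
We prove \Cref{thm:height-law-pyramid-continuity} through the measurement criterion \eqref{eq:measurement-convergence-criterion}. For fixed $N\ge1$ and $R>0$ we show
\[
(d_{\mathrm P})_H\bigl(\mathcal M(\mathcal H_{\eta_j};N,R),\mathcal M(\mathcal H_\eta;N,R)\bigr)\to0 .
\]
Since $\mathcal H_\eta$ is the Box closure of the directed union of the $\Pyr(H_\eta^k)$, the stability \eqref{eq:measurement-box-stability} and the compactness of measurement sets identify $\mathcal M(\mathcal H_\eta;N,R)$ with the $d_{\mathrm P}$-closure of $\bigcup_k\mathcal M(H_\eta^k;N,R)$. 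Everything therefore reduces to comparing the generators $H_{\eta_j}^k$ with $H_\eta^k$ for a fixed $k$. The inner and outer halves of Painlev\'e--Kuratowski convergence are handled separately.

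\emph{Coupling of heights.} By Skorokhod representation we realize $Y_j\sim\eta_j$ and $Y\sim\eta$ on one probability space with $Y_j\to Y$ almost surely. We take the Gaussian coordinate $Z\sim\gamma^k$ to be common to both. On the event
\[
E_{j,\delta}=\{\,|\log Y_j-\log Y|<\delta,\ Y\in[a,b]\,\},
\]
where $[a,b]$ is chosen so that it has $\eta$-mass at least $1-\varepsilon$, the distance $D^{\mathrm{hor}}$ changes by at most $C\delta$ when heights are replaced. To see this, use the change of variables $s=\log y$. Then
\[
D^{\mathrm{hor}}((z,y),(z',y'))=\dK^{\mathrm{hor}}+(s-s'),
\]
where $\dK^{\mathrm{hor}}$ is the upper-half-space hyperbolic distance. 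This distance is invariant under the dilations $(z,y)\mapsto(\lambda z,\lambda y)$ and is $1$-Lipschitz in the vertical coordinate. The dilation does not fix $z$, so we do not use it directly. Instead we use the triangle inequality along the vertical segment from $(z,y)$ to $(z,y')$, whose hyperbolic length is exactly $|s-s'|$. This gives a distortion of at most $2\delta$ for the hyperbolic term and $2\delta$ for the potential term. The estimate therefore holds uniformly in $z$, so no compactness in the $z$ variable is needed.

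\emph{Transfer to measurements.} Take a measurement $F\colon H_\eta^k\to[-R,R]^N$. It is $1$-Lipschitz from $D^{\mathrm{hor}}$ to $d_N^+$, i.e.\ each coordinate lies in $\Lipplus$. We must produce a measurement on $H_{\eta_j}^k$ whose law is $d_{\mathrm P}$-close to $F_*(\gamma^k\otimes\eta)$.

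The first candidate is the transported map $(z,y_j)\mapsto F(z,y)$. It is not defined pointwise, because $y$ is only coupled to $y_j$, so we do not use it. Instead we use the approximate one-sided extension from \Cref{app:transfer-tools}. We restrict $F$ to the set of heights in $[a,b]$. We then pass to $H_{\eta_j}^k$ via the relation $\{((z,y),(z,y'))\colon |\log y-\log y'|<\delta\}$, along which the distortion is at most $4\delta$. Finally, the extension gives a $\Lipplus$ map on all of $H_{\eta_j}^k$, truncated to $[-R,R]$, which agrees with the transported values up to $O(\delta)$ on a set of mass at least $1-2\varepsilon$. Equivalently, we can argue through the Box distance directly: the coupling $(\mathrm{id}\times\text{Skorokhod})$ and the closed set $S$ defined by the event $E_{j,\delta}$ give
\[
\Box(H_\eta^k,H_{\eta_j}^k)\lesssim \max\{\varepsilon+\mathbb P(E_{j,\delta}^c),\,\delta\},
\]
using the observable form \eqref{eq:qm-box-distance} together with the extension tool. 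This bound tends to zero.

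\emph{Conclusion and the main obstacle.} The last display, with \eqref{eq:measurement-box-stability}, proves the symmetric estimate
\[
(d_{\mathrm P})_H\bigl(\mathcal M(H_{\eta_j}^k;N,R),\mathcal M(H_\eta^k;N,R)\bigr)\to0
\]
for each fixed $k$. Two further steps then finish the proof.

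For the \emph{inner} condition \eqref{eq:weak-pyramid-inner}, note that every element of $\mathcal M(\mathcal H_\eta;N,R)$ is approximated by an element of some $\mathcal M(H_\eta^k;N,R)$, and hence by an element of $\mathcal M(H_{\eta_j}^k;N,R)$.

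For the \emph{outer} condition \eqref{eq:weak-pyramid-outer}, the difficulty is that $k$ is not bounded along $j$. A measurement on $H_{\eta_j}^{k_j}$ with $k_j\to\infty$ must still be approximated in $\mathcal H_\eta$. This is why the estimate has to be uniform in $k$, and the Box bound above indeed does not depend on $k$. The reason is that the height coupling and the distortion bound $4\delta$ never touch the Gaussian factor, which is carried along identically.

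I expect the main obstacle to be exactly this uniformity, combined with the lack of tightness near $y\to0$, where $D^{\mathrm{hor}}$ blows up. The choice of the window $[a,b]$ handles the latter: we discard the small-$\eta$-mass regions, which is legitimate because the Box distance tolerates an error of $1-\pi(S)$.
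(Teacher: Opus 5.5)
Your proof is correct, but it follows a different route from the paper's.

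\textbf{The paper's route.} It first proves $\mathcal H(\mathcal G_0,\zeta)=\mathcal H_\zeta$. It then reads the statement off as the constant-base case of \Cref{thm:joint-horocone-pyramid-continuity}. There, the lower inclusion applies the Box continuity of \Cref{thm:general-horocone} to one limiting base at a time. The upper inclusion compactifies vertical profiles with the path metric of \Cref{thm:profile-path-metric} and passes to the limit through \Cref{lem:compact-profile-passage}.

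\textbf{Your route.} You bypass all of this with one observation: the vertical segment from $(z,y)$ to $(z,y')$ has hyperbolic length exactly $|\log y-\log y'|$. Take a Skorokhod coupling of the heights with a shared Gaussian coordinate. Then the relation $\{z=z',\ |\log y-\log y'|\le\delta\}$ has directed distortion at most $4\delta$, independently of $z$, of the heights, and of $k$. The infimum construction $g(w)=\inf_{(x,x')\in S}\{f(x)+D^{\mathrm{hor}}(x',w)\}$ turns this distortion bound into the observable bound in \eqref{eq:qm-box-distance}. Hence $\sup_k\Box(H_\eta^k,H_{\eta_j}^k)\to0$. Since $\mathcal M(\mathcal H_\eta;N,R)$ is the closure of the increasing union of the generator measurement sets, the same bound controls $(d_{\mathrm P})_H$ between the pyramid measurement sets. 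This gives the inner and outer conditions at once.

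\textbf{Comparison.} Your route gives an elementary, quantitative estimate, controlled by the Prokhorov distance between the laws of $\log Y_j$ and $\log Y$. Because $h_Z((z,s),(z,t))=|\log(s/t)|$ over any metric base, the estimate is also uniform over all bases. It could therefore replace the compact-base argument in the varying-height part of \Cref{thm:general-horocone}. The paper's route buys economy: the joint theorem is needed anyway for \Cref{thm:radial-law-transfer}. There the base pyramids themselves vary, and your height-coupling estimate says nothing about a change of base.

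\textbf{Two minor remarks.}
\begin{itemize}
\item The window $[a,b]$ and the concern about $y\to0$ are unnecessary. The $4\delta$ bound holds at all heights, and $\log Y_j\to\log Y$ almost surely.
\item The first description in your transfer paragraph, which passes a single measurement through the relation, is muddled. The Box-relation formulation you give afterwards is the clean one.
\end{itemize}
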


\subsection{Marked Gaussian products and endpoint potentials}

For an ordinary mm-space $Z=(Z,d_Z,\nu)$, a probability measure $\vartheta$ on $\R$, and $q\in[0,1]$, put
\begin{equation}\label{eq:marked-directed-space}
 \mathsf G_q(Z,\vartheta)\coloneqq(\R\times Z,D_{q,Z},\vartheta\otimes\nu),
\end{equation}
where
\begin{equation}\label{eq:Dq-general}
 D_{q,Z}((u,z),(v,z'))\coloneqq\sqrt{(u-v)^2+d_Z(z,z')^2}+q(v-u).
\end{equation}
For an ordinary pyramid $\mathcal P$, define the marked-product pyramid by
\begin{equation}\label{eq:marked-pyramid-operation}
 \mathsf G_q(\mathcal P,\vartheta)
 \coloneqq\overline{\bigcup_{Z\in\mathcal P}\Pyr(\mathsf G_q(Z,\vartheta))}^{\,\Box}.
\end{equation}
Since $\mathcal G_0$ is also the ordinary Gaussian pyramid, write
\begin{equation}\label{eq:G-q-mark-law}
 \mathcal G_{q,\vartheta}\coloneqq\mathsf G_q(\mathcal G_0,\vartheta).
\end{equation}
In particular,
\begin{equation}\label{eq:Gq-mark-specialization} \mathcal G_q=\mathcal G_{q,\gamma_{1/2}}. \end{equation}
Indeed, their finite-dimensional generators and Box closures agree.

\begin{proposition}[Transport of endpoint potentials]\label{prop:endpoint-potential-transport}
Suppose that ordinary pyramids converge weakly as $\mathcal P_n\to\mathcal P$, probability measures on $\R$ converge weakly
as $\vartheta_n\Rightarrow\vartheta$, and $q_n\to q\in[0,1]$.  Then
\begin{equation}\label{eq:directed-marked-product-limit}
 \mathsf G_{q_n}(\mathcal P_n,\vartheta_n)
 \longrightarrow\mathsf G_q(\mathcal P,\vartheta)\qquad(n\to\infty)
\end{equation}
weakly as asymmetric pyramids.
\end{proposition}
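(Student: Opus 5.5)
We plan to verify the measurement criterion \eqref{eq:measurement-convergence-criterion}: for every $N\ge1$ and $R>0$,
\[
(d_{\mathrm P})_H\bigl(\mathcal M(\mathsf G_{q_n}(\mathcal P_n,\vartheta_n);N,R),\mathcal M(\mathsf G_q(\mathcal P,\vartheta);N,R)\bigr)\to0 .
\]
Both targets are Box closures of directed unions, so their measurement sets are the closures of the unions of the measurement sets of the generators $\mathsf G_{q}(Z,\vartheta)$ (this is how \Cref{thm:directed-union-closure} and \eqref{eq:measurement-box-stability} interact). The argument therefore reduces to two one-sided statements. The \emph{inner} one: every $Z\in\mathcal P$ admits $Z_n\in\mathcal P_n$ with $\mathsf G_{q_n}(Z_n,\vartheta_n)$ Box-close to $\mathsf G_q(Z,\vartheta)$, up to an error that vanishes. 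The \emph{outer} one: any Box limit of spaces dominated by $\mathsf G_{q_n}(Z_n,\vartheta_n)$, with $Z_n\in\mathcal P_n$, lies in $\mathsf G_q(\mathcal P,\vartheta)$.

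\textbf{Continuity of the marked product.} The key lemma is that $(Z,\vartheta,q)\mapsto\mathsf G_q(Z,\vartheta)$ is Box-continuous. We first change $q$ with $Z$ and $\vartheta$ fixed. The identity map changes $D_{q,Z}$ by $|q-q'|\,|v-u|$, which is at most $2|q-q'|T$ on the set $\{|u|\le T\}$. Tightness of $\vartheta$ lets us discard the complement at small mass cost, so the Box distance is small.

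Next we change $\vartheta$. We couple $\vartheta_n$ with $\vartheta$ through the quantile coupling $u\mapsto u'$, taken together with the identity on $Z$. On a set of large mass we have $|u-u'|\le\eps$, and the distortion of $D_{q,Z}$ there is at most $(1+q)2\eps$, because $D_{q,Z}$ is jointly $(1+q)$-Lipschitz in $(u,v)$.

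Finally we change $Z$. Given a coupling and a closed set $S$ realizing a small ordinary box distance $\Box(Z,Z')<\eps$, we take the product of the coupling with the diagonal coupling of $\vartheta$. The distortion of $d_Z$ on $S$ is small, so the map $(a,b)\mapsto\sqrt{a^2+b^2}$, which is $1$-Lipschitz in $b$, transfers it to $D_{q,Z}$. One caveat: the ordinary box distance controls distance distortion only after passing to parametrizations, so I would phrase this step through the parametrization form of $\Box$ for mm-spaces. I would then check that \eqref{eq:qm-box-distance} is bounded by a constant times the directed distortion, which is the same argument as in \eqref{eq:coordinate-model-box}.

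\textbf{Inner inclusion.} For $Z\in\mathcal P$ we use weak convergence $\mathcal P_n\to\mathcal P$ to pick $Z_n\in\mathcal P_n$ with $Z_n\to Z$ in Box distance. The continuity lemma then gives $\mathsf G_{q_n}(Z_n,\vartheta_n)\to\mathsf G_q(Z,\vartheta)$. Members of $\Pyr(\mathsf G_q(Z,\vartheta))$ are approximated by pulling measurements back along \eqref{eq:measurement-box-stability}.

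\textbf{Outer inclusion (the main obstacle).} Here a measure $\nu_k\in\mathcal M(\mathsf G_{q_{n_k}}(Z_k,\vartheta_{n_k});N,R)$ converges to some $\nu$, and we must show $\nu$ is a measurement of $\mathsf G_q(\mathcal P,\vartheta)$. The difficulty is that the $Z_k$ need not converge, since $\mathcal P_n$ converges only weakly. I would first reduce to $Z_k$ that are finite-dimensional in the bounded-measurement sense. Measurements are bounded by $R$, and the observables on $\R\times Z$ are $1$-Lipschitz for $D^{\mathrm s}_{q,Z}\le(1+q)\sqrt{\cdot}+\dots$, so $\nu_k$ depends on $Z_k$ only through finitely many $R$-bounded $1$-Lipschitz functions up to small error. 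I would approximate $Z_k$ from below by a finite-dimensional $\ell^\infty$-coordinate model $\mathsf B_M(\mu_k)\preceq Z_k$, with $\mu_k\in\mathcal M(\mathcal P_{n_k};M,R')$. Some care is needed here: the Euclidean combination $\sqrt{(u-v)^2+d_Z^2}$ must still dominate the measurement after $Z$ is replaced by such a model, which I would arrange by including enough distance-function coordinates, with $R'$ large and $M$ large so that the loss is at most $\eps$.

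The precompactness of $\mathcal M(\cdot;M,R')$ then gives a subsequence $\mu_k\to\mu\in\mathcal M(\mathcal P;M,R')$ by \eqref{eq:measurement-convergence-criterion}. The continuity lemma gives $\mathsf G_{q_{n_k}}(\mathsf B_M(\mu_k),\vartheta_{n_k})\to\mathsf G_q(\mathsf B_M(\mu),\vartheta)$. The limit $\nu$ is therefore within $O(\eps)$ of $\mathcal M(\mathsf G_q(\mathcal P,\vartheta);N,R)$, and letting $\eps\to0$ uses the closedness of that set. The step I expect to need the most work is this finite-dimensional reduction under the nonlinear Euclidean combination, which I would attempt first via the high-mass domination principle of \Cref{app:transfer-tools}.
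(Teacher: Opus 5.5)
Your inner inclusion is sound and is essentially the paper's: choose $W_n\in\mathcal P_n$ with $W_n\to Z$ and build a high-mass relation from a mark coupling, a base relation and $q_n\to q$. The gap is in the outer inclusion, at exactly the step you flagged, and your proposed fix does not close it.

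\textbf{Why the reduction fails.} For the compactness of $\mathcal M(\mathcal P_{n_k};M,R')$ to be usable, $M$ must be fixed independently of $k$. The model map $\Phi_k\colon Z_k\to(\R^M,\ell^\infty)$ must also satisfy
\[ F(v,z')-F(u,z)\le\sqrt{(u-v)^2+\|\Phi_k(z)-\Phi_k(z')\|_\infty^2}+q(v-u)+\varepsilon, \]
where the base distance is the model distance, which is only $\le d_{Z_k}(z,z')$.
\begin{itemize}
\item The natural coordinates are the slices $z\mapsto F(u_i,z)$ over a grid of marks, and they fail because the Euclidean combination is not additive.
\item Concretely, take $q=0$, two base points at distance $d\ge3$, and on the window $[-1,1]$ put $F(u,z)=u$ and $F(u,z')=u+\delta$ with $0<\delta\le1$, clipped outside the window.
\item This $F$ is a genuine measurement, since $(t+\delta)^2\le t^2+d^2$ for $t\le2$.
\item Every slice distance equals $\delta$, yet $F(1,z')-F(0,z)=1+\delta>\sqrt{1+\delta^2}$.
\item Adding truncated distance functions $d_{Z_k}(\cdot,z_i)\wedge R'$ to an $\varepsilon$-net does repair the inequality. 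However, the net must cover a set of mass close to one, and its size is unbounded in $k$ (for instance for $Z_k=\sqrt kS^{k-1}$, the case the paper actually needs).
\end{itemize}
So no fixed $M$ results. The high-mass domination lemma does not rescue this either: it is a lower-inclusion tool that requires a candidate limit space in advance.

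\textbf{The missing idea.} Map every $Z_k$ into one fixed compact metric space built from the measurement itself.
\begin{itemize}
\item On a window $J=[-M,M]$, set $g_z(u)\coloneqq F(u,z)-q_nu\mathbf 1_N$. These profiles lie in the fixed compact class of $1$-Lipschitz maps $J\to[-R-M,R-M]^N$, and they satisfy $\|g_z(u)-g_{z'}(v)\|_\infty\le\sqrt{(u-v)^2+d_{Z_k}(z,z')^2}$.
\item Equip this class with the path metric generated by the least $r$ for which $\|g(u)-h(v)\|_\infty\le\sqrt{(u-v)^2+r^2}$ holds.
\item The profile map is then $1$-Lipschitz, so its image is an exact factor of $Z_k$ and lies in $\mathcal P_{n_k}$.
\item A straight-segment chain argument shows that $(u,g)\mapsto\operatorname{clip}(g(u)+qu\mathbf 1_N)$ is one-sided $1$-Lipschitz on the marked product over the profile space.
\item Because this space does not depend on $k$, the profile measures converge along a subsequence. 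The limit base lies in $\mathcal P$ by the outer condition.
\item The evaluation pushforward is a submeasure of $\nu$ of mass $\vartheta(J)$, and exhausting $J$ finishes the argument.
\end{itemize}
Once this profile space is available, your coordinate-model route would also work, taking $M$ to be the size of an $\varepsilon$-net of the profile space. The profile metric and its evaluation inequality are the ingredient your proposal is missing.
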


\subsection{Tangent limits of high horocones}

\begin{theorem}[High-horocone tangent theorem]\label{thm:horocone-tangent}
Let $h_j\to\infty$, let $\eta_j$ be a probability measure on $(0,+\infty)$, take $Y_j\sim\eta_j$, and put
$\zeta_j\coloneqq h_j-Y_j$.  Suppose that, for a probability measure $\vartheta$ on $\R$,
\begin{equation}\label{eq:tangent-height-assumptions}
 (\zeta_j)_*\mathbb P\Rightarrow\vartheta\qquad(j\to\infty),
 \qquad
 \frac{\zeta_j}{h_j}\longrightarrow0\qquad(j\to\infty)
 \quad\text{in probability}.
\end{equation}
Then
\begin{equation}\label{eq:horocone-tangent-limit}
 h_j\mathcal H_{\eta_j}\longrightarrow\mathcal G_{1,\vartheta}\qquad(j\to\infty)
\end{equation}
weakly as asymmetric pyramids.
\end{theorem}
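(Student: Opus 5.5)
We prove \Cref{thm:horocone-tangent} by reducing the rescaled horocone, near height $h_j$, to a marked Gaussian product with $q=1$. After that, \Cref{prop:endpoint-potential-transport} handles the passage to the limit.

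\emph{Local computation.} In $h_jH^k_{\eta_j}$, write $y=h_j-\zeta$ and $y'=h_j-\zeta'$, and rescale the base by $z=h_j w$. Here $w$ is the original base coordinate, so the rescaled base $h_j\mathbb R^k_\gamma$ has coordinates $z$. From \eqref{eq:horocone-distance},
\[
h_jD^{\mathrm{hor}}=h_j\arcosh\Bigl(1+\frac{|z-z'|^2+(\zeta-\zeta')^2}{2yy'}\Bigr)+h_j\log\frac{h_j-\zeta}{h_j-\zeta'} .
\]
When $|\zeta|,|\zeta'|,|z-z'|$ are $o(h_j)$, we have $yy'=h_j^2(1+o(1))$. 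Using $\arcosh(1+t^2/2)=t+O(t^3)$, the first term is
\[
\sqrt{|z-z'|^2+(\zeta-\zeta')^2}\,(1+o(1)).
\]
The log term is $(\zeta'-\zeta)+o(1)$ times the relevant scale. With $u=\zeta$ and $v=\zeta'$, this gives exactly $D_{1,Z}((u,z),(v,z'))$ from \eqref{eq:Dq-general}, with $Z=\R^k$ Euclidean. The Gaussian base law, however, becomes $\gamma^k$ pushed forward by the map $w\mapsto h_jw$. So the base should instead be kept as the space $h_j\mathbb R^k_\gamma$. As $h_j\to\infty$ this family is not a fixed space. I therefore plan to use the pyramid-level statement: the ordinary pyramids $\mathcal P_j:=\overline{\bigcup_k\Pyr(h_j\mathbb R^k_\gamma)}$ all equal the Gaussian pyramid $\mathcal G_0$. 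This holds because scaling the Gaussian pyramid by $h_j$ gives the same pyramid, since by the Lévy-type concentration $\mathcal G_0$ is scale-invariant (taking $\R^{k'}\to\R^k$ projections of normalized sums).

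\emph{Comparison step.} We construct $\mathsf G_1(h_j\mathbb R^k_\gamma,(\zeta_j)_*\mathbb P)$ and $h_jH^k_{\eta_j}$ on the same underlying set, via the identity map $(z,\zeta)\leftrightarrow(z,h_j-\zeta)$. We then show that the Box distance between them tends to $0$ uniformly in $k$, after restricting to bounded measurements. The domain is restricted to the set where $|\zeta|\le\eps h_j$, which has probability tending to $1$ by the hypothesis $\zeta_j/h_j\to0$ in probability. On that set, distances bounded by $R$ agree up to $o(1)+O(\eps)R$, by the local computation, since $|z-z'|$ is controlled by the distance itself. Distances exceeding $R$ on one side exceed roughly $R$ on the other, so they are irrelevant for measurements into $[-R,R]^N$. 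Via \eqref{eq:measurement-box-stability} and the approximate one-sided extension and high-mass domination tools of \Cref{app:transfer-tools}, this yields
\[
(d_{\mathrm P})_H\bigl(\mathcal M(h_j\mathcal H_{\eta_j};N,R),\ \mathcal M(\mathsf G_1(\mathcal G_0,(\zeta_j)_*\mathbb P);N,R)\bigr)\to0 .
\]
\Cref{prop:endpoint-potential-transport}, applied with $q_n=1$, $\mathcal P_n=\mathcal G_0$ and $\vartheta_n=(\zeta_j)_*\mathbb P\Rightarrow\vartheta$, gives $\mathsf G_1(\mathcal G_0,(\zeta_j)_*\mathbb P)\to\mathcal G_{1,\vartheta}$. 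Combining the two via criterion \eqref{eq:measurement-convergence-criterion} then concludes.

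\emph{Main obstacle.} The hardest step is the uniform-in-$k$ error control near the cutoff, in particular the one-sided extension of measurements off the high-mass set. The directed distance can be very negative-looking in the log term; it is nonnegative overall, but the two pieces can be individually large. I would first try to bound the defect of a measurement $F$ on the good set by $o(1)$ and $\eps R$. I would then extend $F$ by the approximate one-sided extension lemma, with error absorbed into the Prokhorov distance through the mass of the bad set. Finally, letting $\eps\to0$ after $j\to\infty$ completes the argument.
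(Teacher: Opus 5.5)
\textbf{The base rescaling is a genuine error.} The overall plan matches the paper's. You compare each rescaled generator with a marked product through a truncated-kernel estimate that is uniform in the base, pass this to factors and Box closures, and finish with \Cref{prop:endpoint-potential-transport} for $q=1$ and the constant base pyramid $\mathcal G_0$. But the step where you replace the base by $h_j\mathbb R^k_\gamma$ is wrong, and the claim you use to repair it is false: $\mathcal G_0$ is not scale invariant. For $h>1$, the clipped coordinate $w\mapsto\max\{-L,\min\{L,hw_1\}\}$ is a bounded measurement of $h\mathbb R^1_\gamma$ whose variance tends to $h^2$ as $L\to\infty$. By contrast, \Cref{thm:Gq-classification} with $q=0$ (Gaussian Poincar\'e) gives $\mathbf V^{\to}(\mathcal G_0)=1$. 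Normalized block sums only give $\mathbb R^k_\gamma\preceq\mathbb R^{k'}_\gamma$; reaching $h\mathbb R^k_\gamma$ costs Lipschitz constant $h$. So $h_j\mathcal G_0\supsetneq\mathcal G_0$. A comparison with $\mathsf G_1(h_j\mathbb R^k_\gamma,\cdot)$ would also fail directly, since the identity map in the original coordinate distorts base distances by the factor $h_j$.

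The point is that no rescaling occurs at all. The horizontal hyperbolic metric at height $y$ is $|dw|/y$, so at height $y\approx h_j$, multiplying by $h_j$ restores exactly the original base metric:
\[
2h_j\arsinh\frac{\sqrt{|w-w'|^2+(\xi-\zeta)^2}}{2\sqrt{(h_j-\xi)(h_j-\zeta)}}\longrightarrow\sqrt{|w-w'|^2+(\xi-\zeta)^2}
\]
in the unscaled coordinate $w$. Your own displayed formula, which is written in the original coordinate, already shows this. The paper uses the measure-preserving map $(z,y)\mapsto(h_j-y,z)$ from $h_j\mathsf H(Z,\eta_j)$ to $\mathsf G_1(Z,\vartheta_j)$, with the same base $Z$ and the same base measure. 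The target is therefore $\mathsf G_1(\mathcal G_0,\vartheta_j)$ directly, and no invariance statement is needed.

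\textbf{Your window choice leaves the error control open.} On $|\zeta|\le\eps h_j$, the term $h_j\log\frac{h_j-\xi}{h_j-\zeta}$ differs from $\zeta-\xi$ by order $\eps^2h_j$. The symmetric term can be of order $\eps h_j$ while the directed distance is nearly zero, for example moving upward with a small horizontal displacement. So a factor $(1+o(1))$ on the symmetric term plus $o(1)$ on the potential does not yield the additive bound $o(1)+O(\eps)R$ you assert. Getting it would require an additional cancellation argument between the two pieces, which you do not supply.

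The paper avoids this entirely. Since $\vartheta_j\Rightarrow\vartheta$, the marks are tight, so it restricts to $|\xi|,|\zeta|\le M$ with $M$ fixed; the hypothesis $\zeta_j/h_j\to0$ is then automatic.
\begin{itemize}
\item On that window the potential term is bounded by a constant $B_M$.
\item Whichever truncated kernel lies below $2R$ forces $r=d_Z(z,z')$ into a compact interval depending only on $R$ and $M$.
\item Local uniform convergence of both pieces then gives the truncated-kernel estimate uniformly in $Z$.
\end{itemize}
One lets $j\to\infty$ first and $M\to\infty$ afterwards, and \Cref{lem:bounded-kernel-transfer} applies generatorwise.
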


When the base pyramid and the height law vary simultaneously, continuity for a fixed Gaussian base does not suffice to
pass directly to the profile limit.  The following joint continuity theorem resolves this obstruction.

\begin{theorem}[Joint continuity of horocone-pyramid formation]
\label{thm:joint-horocone-pyramid-continuity}
Suppose that ordinary pyramids converge weakly as $\mathcal P_n\to\mathcal P$ and arbitrary Borel probability measures on
$(0,+\infty)$ converge weakly as $\eta_n\Rightarrow\eta$, both as $n\to\infty$.  Then
\begin{equation}\label{eq:joint-horocone-pyramid-continuity}
 \mathcal H(\mathcal P_n,\eta_n)
 \longrightarrow\mathcal H(\mathcal P,\eta)
 \qquad(n\to\infty)
\end{equation}
weakly as asymmetric pyramids.
\end{theorem}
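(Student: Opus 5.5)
We prove \Cref{thm:joint-horocone-pyramid-continuity} through the measurement criterion \eqref{eq:measurement-convergence-criterion}. Because every sequence of pyramids has a weakly convergent subsequence, it suffices to show that every subsequential weak limit $\mathcal Q$ of $\mathcal H(\mathcal P_n,\eta_n)$ equals $\mathcal H(\mathcal P,\eta)$. We establish the two inclusions separately.

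\emph{Lower inclusion $\mathcal H(\mathcal P,\eta)\subset\mathcal Q$.} Since $\mathcal Q$ is Box closed, it is enough to show that $\mathsf H(Z,\eta)\in\mathcal Q$ for every $Z\in\mathcal P$; downward closedness then handles the rest of each principal pyramid. Fix such a $Z$. By \eqref{eq:weak-pyramid-inner} applied to ordinary pyramids, choose $Z_n\in\mathcal P_n$ with $\Box(Z_n,Z)\to0$. Then $\mathsf H(Z_n,\eta_n)\in\mathcal H(\mathcal P_n,\eta_n)$, and the required step is
\[
\Box\bigl(\mathsf H(Z_n,\eta_n),\mathsf H(Z,\eta)\bigr)\to0 .
\]
We prove this in three steps.
\begin{enumerate}[label=(\arabic*)]
\item Couple $\eta_n$ with $\eta$ by a Skorokhod/Prokhorov coupling, so that paired heights are close outside small mass.
\item Restrict to a compact height window $[\delta,\delta^{-1}]$ of mass at least $1-\varepsilon$. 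On $\{y,y'\in[\delta,\delta^{-1}]\}$ the formula \eqref{eq:horocone-distance} is uniformly continuous in $(d_Z,y,y')$, jointly.
\item Combine this with an $\varepsilon$-isometry coupling of $Z_n$ and $Z$ outside small mass. The product coupling then has small directed distortion on a closed set of large mass, which bounds the Box distance \eqref{eq:qm-box-distance}.
\end{enumerate}
One subtlety arises because the base is only close in ordinary Box distance, i.e.\ up to $1$-Lipschitz observables rather than pointwise. We therefore use the parametrization form of Box distance (almost isometric maps off small sets), which is equivalent for ordinary mm-spaces. The potential term $\log(y/y')$ depends only on heights, so it transfers exactly.

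\emph{Upper inclusion $\mathcal Q\subset\mathcal H(\mathcal P,\eta)$.} By \eqref{eq:measurement-membership-criterion} it suffices to control bounded measurements. Take a measurement $\nu_n\in\mathcal M(\mathsf H(Z_n,\eta_n);N,R)$ with $Z_n\in\mathcal P_n$ and $\nu_n\to\nu$. We must show $\nu\in\mathcal M(\mathcal H(\mathcal P,\eta);N,R)$.

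A bounded measurement only sees distances up to scale $R$. The key claim is that $F_n$ is, up to small error, a measurement of a truncated product $\mathsf H(Z_n',\eta_n)$, where $Z_n'$ is a finite-dimensional image of $Z_n$ in $\ell^\infty$ dominated by $Z_n$. To obtain $Z_n'$:
\begin{enumerate}[label=(\arabic*)]
\item Discretize heights into finitely many levels $y_1<\dots<y_L$ in the compact window, with small error in the conditional measures.
\item For each level $y_\ell$, write $F_n(\cdot,y_\ell)$ as a map on $Z_n$. By \eqref{eq:horocone-distance}, the horocone distance at a fixed height pair is an increasing function of $d_Z$, so these slices are Lipschitz for a modified metric.
\item Collect the slices as finitely many $1$-Lipschitz functions on $Z_n$, i.e.\ an ordinary measurement $\mathcal M(Z_n;NL,R')$ with appropriately rescaled coordinates.
\end{enumerate}
Pass to the limit in these ordinary measurements using $\mathcal P_n\to\mathcal P$, via the ordinary measurement criterion. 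This yields $Z\in\mathcal P$ and a limiting map. We then rebuild a measurement on $\mathsf H(Z,\eta)$ using the approximate one-sided extension from \Cref{app:transfer-tools}, and the high-mass domination principle absorbs the discarded small masses. The errors go to zero as the discretization is refined, and compactness of $\mathcal M(\mathcal H(\mathcal P,\eta);N,R)$ gives $\nu$ in it.

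The main obstacle is the reconstruction step in the upper inclusion. A measurement on the horocone is constrained by the directed distance between \emph{different} heights, and this distance couples base distance and heights nonlinearly. The set of slice tuples is therefore not a product of independent ordinary measurements; the constraint is encoded by the family of conditions
\[
F(z',y')_j-F(z,y)_j\le D^{\mathrm{hor}}\bigl((z,y),(z',y')\bigr).
\]
To handle this we would try the following. Since $D^{\mathrm{hor}}$ depends on $z,z'$ only through $d_Z(z,z')$, monotonically, we replace $Z_n$ by the ordinary $\ell^\infty$-measurement of all slices. This is an ordinary space dominated by $Z_n$ and carrying the pushforward measure, so it lies in $\mathcal P_n$. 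The Funk horocone over it dominates the measured object, by the functoriality in \Cref{thm:general-horocone} applied in reverse via the Kuratowski-type embedding. Its limit in $\mathcal P$ then supplies the required $Z$.
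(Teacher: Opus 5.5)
Your lower inclusion is essentially the paper's. You choose $Z_n\in\mathcal P_n$ with $Z_n\to Z$, obtain $\mathsf H(Z_n,\eta_n)\to\mathsf H(Z,\eta)$ from a windowed height coupling (the paper cites the varying-height clause of \Cref{thm:general-horocone}), and conclude with the outer condition and Box closedness.

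\textbf{The gap is in the upper inclusion}, exactly at the step you call the main obstacle. The $\ell^\infty$-image $W$ of the normalized slices $y\,F_n(\cdot,y)$ is dominated by $Z_n$ and lies in a compact cube. However, $F_n$ does not descend to a one-sided $1$-Lipschitz map on $\mathsf H(W,\eta_n)$. Slices only record same-height differences. For $s\ne t$, the admissible cross-height increment $\Omega_{s,t}(r)$ exceeds $\Omega_{s,t}(0)$ only by $O(r^2)$, so no encoding comparable to the uniform distance of profiles can see the cross-height constraint.

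The defect is of order one. Take a two-point base $\{z_1,z_2\}$ at distance $1$, $\eta=\frac12(\delta_{1/100}+\delta_1)$, $s=1/100$, $t=1$, and set $F(z_1,\cdot)\equiv0$, $F(z_2,\cdot)\equiv\Omega_{s,t}(1)$. On the four-point horocone the four nontrivial directed inequalities hold, with $(z_1,s)\to(z_2,t)$ the binding one, so $F$ is a bounded one-sided observable. Yet the normalized slices differ by about $\log 2$ in $\ell^\infty$, and
\[
F(z_2,t)-F(z_1,s)=\Omega_{s,t}(1)\approx\log 2>0.39\approx\Omega_{s,t}(\log 2).
\]
An additive error of about $0.3$ is not absorbed by \Cref{lem:one-sided-extension} or \Cref{lem:high-mass-domination}, and refining the height levels does not reduce it.

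Your appeal to functoriality also points the wrong way. A $1$-Lipschitz map $Z_n\to W$ gives $\mathsf H(W,\eta_n)\preceq\mathsf H(Z_n,\eta_n)$; it does not factor $F_n$ through $\mathsf H(W,\eta_n)$. A Kuratowski embedding would keep the metric, but then you lose the compactness needed to extract a limiting base.

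\textbf{The missing idea} is to build the cross-height constraint into the base metric itself.
\begin{itemize}
\item The paper takes profiles $g_{n,z}=F_n(z,\cdot)$ on an enlarged window $J^*\supset J$.
\item It lets $r_0(g,h)$ be the least $r$ with $d_N^+(g(s),h(t))\le\Omega_{s,t}(r)$ and $d_N^+(h(s),g(t))\le\Omega_{s,t}(r)$ for all $s,t\in J^*$.
\item It then uses the chain metric $d_{\mathscr K}$, capped at $H$ where $\Omega_{s,t}(H)\ge2R$.
\end{itemize}
\Cref{thm:profile-path-metric} shows two things. First, $(\mathscr K,d_{\mathscr K})$ is compact with the uniform topology; note that $d_{\mathscr K}$ is controlled by $C\sqrt{\lVert g-h\rVert_\infty}$ rather than linearly, which is exactly the quadratic effect above. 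Second, by subdividing a hyperbolic geodesic whose heights stay in $J^*$ so that the potential increments telescope, it proves $d_N^+(g(s),h(t))\le\Omega_{s,t}(d_{\mathscr K}(g,h))$ for $s,t\in J$.

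The rest then follows:
\begin{itemize}
\item Since $r_0(g_{n,z},g_{n,z'})\le d_{Z_n}(z,z')$, the profile space is an exact factor of $Z_n$ and lies in $\mathcal P_n$.
\item The common compact ambient space yields a limit base $Z_J\in\mathcal P$.
\item Evaluation is $1$-Lipschitz on $\mathsf H(Z_J,\eta)$ over $J$.
\item \Cref{lem:compact-profile-passage}, with error $2\eta((0,+\infty)\setminus J)$, finishes the argument.
\end{itemize}
Your proposal contains none of $r_0$, the chain construction, or the geodesic evaluation inequality, and these are what make the upper inclusion work.
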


Proofs of
\Cref{thm:radial-law-transfer,thm:height-law-pyramid-continuity,thm:joint-horocone-pyramid-continuity}
are given in \Cref{app:general-transfer-proofs}.  Proofs of
\Cref{prop:endpoint-potential-transport,thm:horocone-tangent} are given in
\Cref{app:marked-product-proofs}.

\section{The large-beta directed Gaussian phase}\label{sec:large-beta}

The large-beta clause of \Cref{thm:main} retains Gaussian radial fluctuations as a directed mark on a transformed angular sphere.  A truncated-kernel comparison and marked-Gaussian absorption then identify the limit as $\mathcal G_q$.

Assume throughout this section that $\beta_n\to+\infty$.  Put
\begin{equation}\label{eq:large-beta-reference} c_n\coloneqq\sqrt{2\beta_n}, \qquad \bar\rho_n\coloneqq\arsinh\sqrt{\frac{n}{2\beta_n}}. \end{equation}
Then $q_n=\tanh\bar\rho_n$ by \eqref{eq:qn}.

\begin{theorem}[Universal radial central limit theorem]\label{thm:radial-clt}
For the radius in \eqref{eq:rho-gamma},
\begin{equation}\label{eq:radial-clt} c_n(\mathbf R_n-\bar\rho_n)\overset{\mathrm{d}}{\longrightarrow} N(0,1/2). \end{equation}
The limiting radial variable is independent of every fixed finite collection of the angular variables $\sqrt n\,\Theta_{n,j}$.
\end{theorem}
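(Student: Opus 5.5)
The plan is a delta-method argument on the logarithmic scale, organized so that the estimates are uniform in the ratio $n/\beta_n$. Put
\[ w_n\coloneqq\log U_n-\log V_n,\qquad \bar w_n\coloneqq\log\frac n2-\log\beta_n, \]
so that by \eqref{eq:rho-gamma} we have $\mathbf R_n=g(w_n)$ with $g(w)\coloneqq\arsinh(e^{w/2})$, and $\bar\rho_n=g(\bar w_n)$ by \eqref{eq:large-beta-reference}.

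A direct computation gives
\[ g'(w)=\tfrac12\tanh g(w), \qquad g''(w)=\tfrac14\tanh g(w)\,\bigl(1-\tanh^2 g(w)\bigr). \]
Hence $0\le g'\le\frac12$, $g'(\bar w_n)=q_n/2$ by \eqref{eq:qn}, and $|g''|$ is bounded by a constant times $g'$. By Taylor's formula,
\[ c_n(\mathbf R_n-\bar\rho_n)=\frac{c_nq_n}{2}(w_n-\bar w_n)+E_n, \]
where $E_n$ is the second-order remainder.

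The leading term comes from two independent Gamma central limit theorems. Since $n\to\infty$ and $\beta_n\to\infty$,
\[ A_n\coloneqq\sqrt{n/2}\,\bigl(\log U_n-\log(n/2)\bigr)\Rightarrow N(0,1),\qquad B_n\coloneqq\sqrt{\beta_n}\,(\log V_n-\log\beta_n)\Rightarrow N(0,1). \]
These follow from the standard CLT for Gamma variables and the delta method for $\log$. The leading term is then $a_nA_n-b_nB_n$, with
\[ a_n^2=\frac{c_n^2q_n^2}{2n}=\frac{\beta_n}{n+2\beta_n},\qquad b_n^2=\frac{c_n^2q_n^2}{4\beta_n}=\frac{n}{2(n+2\beta_n)}, \]
so that $a_n^2+b_n^2=\frac12$ exactly. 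The coefficients need not converge. The characteristic function of $a_nA_n-b_nB_n$ is the product $\varphi_{A_n}(a_nt)\,\varphi_{B_n}(-b_nt)$, because $A_n$ and $B_n$ are independent. Pointwise convergence of characteristic functions is locally uniform, so this product is $e^{-a_n^2t^2/2}e^{-b_n^2t^2/2}+o(1)=e^{-t^2/4}+o(1)$. This gives $N(0,1/2)$ for the leading term.

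The main obstacle is the remainder $E_n$, which must vanish uniformly across the regimes $n\ll\beta_n$, $n\asymp\beta_n$ and $n\gg\beta_n$. Taylor's formula gives
\[ |E_n|\lesssim c_n\sup_{\xi}|g''(\xi)|\,(w_n-\bar w_n)^2, \]
with the supremum over $\xi$ between $w_n$ and $\bar w_n$. A crude bound $|g''|\le\frac18$ is not enough when $q_n\to0$. I would therefore use $|g''(\xi)|\le\frac12 g'(\xi)$ together with the bounded ratio $g'(\xi)/g'(\bar w_n)\le e^{|\xi-\bar w_n|/2}$. This ratio bound holds because $\log\tanh g$ is $\frac12$-Lipschitz in $w$. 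Combined with $(w_n-\bar w_n)^2=O_{\mathbb P}(2/n+1/\beta_n)$, this yields
\[ |E_n|=O_{\mathbb P}\Bigl(c_nq_n\bigl(\tfrac1n+\tfrac1{\beta_n}\bigr)\Bigr). \]
Since $c_n^2q_n^2=2\beta_nn/(n+2\beta_n)\le 2\min\{n,\beta_n\}$ while $\frac1n+\frac1{\beta_n}\le 2/\min\{n,\beta_n\}$, the remainder is $O_{\mathbb P}\bigl(\min\{n,\beta_n\}^{-1/2}\bigr)\to0$. Slutsky's lemma then gives \eqref{eq:radial-clt}.

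For the independence statement, $\Theta_n$ is independent of $(U_n,V_n)$, so $\mathbf R_n$ is independent of $\Theta_n$ for each $n$. By the Poincaré--Borel lemma, any fixed finite collection $(\sqrt n\,\Theta_{n,j})_{j\le k}$ converges to $N(0,I_k)$. Joint characteristic functions factor, so the joint limit is the product law $N(0,1/2)\otimes N(0,I_k)$.
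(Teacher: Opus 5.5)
Your proposal is correct and follows essentially the same route as the paper (Lemma~\ref{lem:radial-linearization} and the proof of the theorem): a Taylor expansion of $\arsinh(e^{s/2})$ in the log-ratio, whose linear term is the same combination of two independent log-Gamma fluctuations with coefficients satisfying $a_n^2+b_n^2=\tfrac12$, together with a second-order remainder controlled uniformly in $\beta_n/n$ through the explicit formula for the second derivative. The differences are cosmetic. You bound the remainder using $g''\le\tfrac12 g'$ and obtain $O_{\mathbb P}(\min\{n,\beta_n\}^{-1/2})$, whereas the paper keeps the factor $q_n(1-q_n^2)$ and obtains $O_{\mathbb P}(n^{-1/2})$. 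You also handle the non-convergent coefficients through locally uniform convergence of characteristic functions, whereas the paper passes to subsequential coefficient limits.
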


\begin{proof}
\Cref{lem:radial-linearization} replaces the radial fluctuation by the linear combination in \eqref{eq:radial-linear-part}, with an error tending to zero in probability uniformly over the ratio $\beta_n/n$.  The two logarithmic Gamma fluctuations converge jointly to independent standard Gaussian variables, and the squares of their deterministic coefficients sum to $1/2$.  Every subsequential coefficient limit therefore gives $N(0,1/2)$, which proves \eqref{eq:radial-clt}.  The angular variable is independent of $(U_n,V_n)$ for every $n$, and its first finitely many coordinates converge jointly to independent standard Gaussian variables.  This completes the proof.
\end{proof}

For $a_n\to+\infty$, equip $\sqrt nS^{n-1}$ with
\begin{equation}\label{eq:transformed-sphere-metric} \delta_n(z,z')\coloneqq2a_n\arsinh\frac{|z-z'|}{2a_n}. \end{equation}
\Cref{thm:transformed-sphere-gaussian} proves that these transformed angular spheres, as well as the chord spheres, converge to the Gaussian pyramid.  The projection and radial-lifting estimates are collected in \Cref{app:profile-estimates}.

\begin{corollary}[Marked Gaussian absorption]\label{thm:marked-gaussian-absorption}
Let $a_n\to+\infty$, let probability measures $\nu_n\longrightarrow\gamma_{1/2}$ weakly on $\R$, and let $q_n\to q\in[0,1]$.  On $\R\times\sqrt nS^{n-1}$ put the product measure $\nu_n\otimes\sigma_{n-1}$ and the directed distance
\begin{equation}\label{eq:marked-comparison-distance} D_n((u,z),(v,z')) \coloneqq\sqrt{(u-v)^2+\delta_n(z,z')^2}+q_n(v-u), \end{equation}
where $\delta_n$ is defined by \eqref{eq:transformed-sphere-metric}.  Then
\[ \Pyr(\R\times\sqrt nS^{n-1},D_n, \nu_n\otimes\sigma_{n-1}) \longrightarrow\mathcal G_q. \]
\end{corollary}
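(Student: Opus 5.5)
The plan is to recognize the space as the marked product $\mathsf G_{q_n}(Z_n,\nu_n)$ of \eqref{eq:marked-directed-space}, with base the ordinary mm-space
\[ Z_n\coloneqq(\sqrt nS^{n-1},\delta_n,\sigma_{n-1}), \]
and then apply \Cref{prop:endpoint-potential-transport}.

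First, compare the distances. The distance \eqref{eq:marked-comparison-distance} is literally $D_{q_n,Z_n}$ from \eqref{eq:Dq-general} with $d_Z=\delta_n$. The map $t\mapsto 2a\arsinh(t/2a)$ is increasing, concave and subadditive and vanishes at $0$, so $\delta_n$ is a metric inducing the usual topology. Hence $Z_n$ is a genuine mm-space and the triple in the statement equals $\mathsf G_{q_n}(Z_n,\nu_n)$.

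Second, pass from spaces to pyramids. I would check that
\[ \Pyr(\mathsf G_{q_n}(Z_n,\nu_n))=\mathsf G_{q_n}(\Pyr(Z_n),\nu_n). \]
The inclusion $\supseteq$ holds because any $Z\preceq Z_n$ via $p$ gives $\mathrm{id}\times p$ as an exact domination $\mathsf G_{q_n}(Z,\nu_n)\preceq\mathsf G_{q_n}(Z_n,\nu_n)$. This uses that $D_{q,Z}$ is monotone in $d_Z$ while the signed term $q(v-u)$ is untouched. The left side is Box closed and contains $\mathsf G_{q_n}(Z_n,\nu_n)$ itself, which gives $\subseteq$ after taking the closure in \eqref{eq:marked-pyramid-operation}.

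Third, identify the limits. By \Cref{thm:transformed-sphere-gaussian}, $\Pyr(Z_n)\to\mathcal G_0$, the ordinary Gaussian pyramid, as ordinary pyramids. By hypothesis $\nu_n\Rightarrow\gamma_{1/2}$ and $q_n\to q$. \Cref{prop:endpoint-potential-transport} then gives
\[ \mathsf G_{q_n}(\Pyr(Z_n),\nu_n)\to\mathsf G_q(\mathcal G_0,\gamma_{1/2})=\mathcal G_{q,\gamma_{1/2}}, \]
and this equals $\mathcal G_q$ by \eqref{eq:Gq-mark-specialization}.

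The main obstacle is the pyramid identity in the second step, specifically whether the marked-product operation applied to a principal pyramid is again principal. Exact factors of $\mathsf G_{q_n}(Z_n,\nu_n)$ need not be of product form, but this does not matter because the closure in the definition already absorbs them. If the identity causes trouble, I would instead apply \Cref{prop:endpoint-potential-transport} with the sequence of pyramids $\Pyr(Z_n)$ and use the two-sided sandwich
\[ \Pyr(\mathsf G_{q_n}(Z_n,\nu_n))\subseteq\mathsf G_{q_n}(\Pyr(Z_n),\nu_n)\subseteq\Pyr(\mathsf G_{q_n}(Z_n,\nu_n)). \]
Here the first inclusion holds because the space lies in the union defining the right-hand pyramid, and the second because that union consists of factors of the space and $\Pyr(\cdot)$ is closed.
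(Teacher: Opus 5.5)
Your proposal is correct and follows the paper's proof step for step. You identify the space as $\mathsf G_{q_n}(Z_n,\nu_n)$ over $Z_n=(\sqrt nS^{n-1},\delta_n,\sigma_{n-1})$, obtain $\mathsf G_{q_n}(\Pyr(Z_n),\nu_n)=\Pyr(\mathsf G_{q_n}(Z_n,\nu_n))$ from $\mathrm{id}\times p$ and $Z_n\in\Pyr(Z_n)$, and conclude with \Cref{thm:transformed-sphere-gaussian}, \Cref{prop:endpoint-potential-transport}, and \eqref{eq:Gq-mark-specialization}. There is one wording slip in your second step: Box-closedness of $\Pyr(\mathsf G_{q_n}(Z_n,\nu_n))$ is what gives $\supseteq$, while $\subseteq$ comes from $Z_n\in\Pyr(Z_n)$, as your sandwich paragraph correctly says.
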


\begin{proof}
Put $Z_n\coloneqq(\sqrt nS^{n-1},\delta_n,\sigma_{n-1})$ and
$\mathcal P_n\coloneqq\Pyr(Z_n)$.  By \Cref{thm:transformed-sphere-gaussian},
$\mathcal P_n\to\mathcal G_0$.  For every $W\preceq Z_n$, the product of its domination map with the identity on $\R$
realizes
$\mathsf G_{q_n}(W,\nu_n)\preceq\mathsf G_{q_n}(Z_n,\nu_n)$.  Conversely, $Z_n\in\mathcal P_n$, so
\[
 \mathsf G_{q_n}(\mathcal P_n,\nu_n)
 =\Pyr\bigl(\mathsf G_{q_n}(Z_n,\nu_n)\bigr)
 =\Pyr(\R\times\sqrt nS^{n-1},D_n,\nu_n\otimes\sigma_{n-1}).
\]
Apply \Cref{prop:endpoint-potential-transport} to
$\mathcal P_n\to\mathcal G_0$, $\nu_n\Rightarrow\gamma_{1/2}$, and $q_n\to q$.  The right-hand side converges to
$\mathsf G_q(\mathcal G_0,\gamma_{1/2})$, which equals $\mathcal G_q$ by
\Cref{eq:Gq-mark-specialization}.  This completes the proof.
\end{proof}

\Cref{lem:large-beta-kernel-comparison} gives measure-preserving comparison maps on radial windows whose omitted mass and truncated-kernel error both tend to zero.  Therefore, \Cref{lem:bounded-kernel-transfer} gives the same subsequential pyramid limits for the Funk balls and comparison spaces.  \Cref{thm:radial-clt,thm:marked-gaussian-absorption} identify their limit as $\mathcal G_q$.
This proves the large-beta clause of \Cref{thm:main}.

\section{The fixed-beta Gaussian--chi Funk horocone phase}\label{sec:fixed-beta}

For the fixed-beta clause of \Cref{thm:main}, the entire chi-distributed height remains in the limit as $\beta_n$ approaches a positive constant.  Horospherical comparison produces a Funk horocone, and the upper inclusion compactifies radial profiles with a path metric before evaluating them over Gaussian-pyramid bases.

\Cref{thm:general-horocone} implies that a horocone with height law $\chi_{2\beta}$ over a Gaussian-pyramid member belongs to $\mathcal H_\beta$, and that joint box convergence of the base and height law passes to the horocones.  This supplies the continuity and closure inputs for the two inclusions in \Cref{thm:fixed-beta-convergence}.

\subsection{Horospherical comparison}

For $x=\tanh\rho\,\theta$, define
\begin{equation}\label{eq:horospherical-map} z\coloneqq\sqrt n\,\theta, \qquad y\coloneqq\frac{\sqrt n}{\cosh\rho} =\sqrt{n(1-|x|^2)}. \end{equation}
For $s,t>0$ and $r\ge0$, put
\begin{equation}\label{eq:upper-half-Funk} \Omega_{s,t}(r) \coloneqq\arcosh\frac{r^2+s^2+t^2}{2st}+\log\frac{s}{t}. \end{equation}

\begin{theorem}\label{thm:horospherical-transfer}
There is a function $\eps(R)\downarrow0$ such that, whenever $\rho,\sigma\ge R$,
\begin{equation}\label{eq:horospherical-error} \left|\dF((\rho,\theta),(\sigma,\eta)) -\Omega_{\sqrt n/\cosh\rho,\sqrt n/\cosh\sigma} (\sqrt n|\theta-\eta|)\right| \le\eps(R). \end{equation}
Let
\begin{equation}\label{eq:height-law} Y_n^2\coloneqq\frac{nV_n}{U_n+V_n} \end{equation}
and define
\begin{equation}\label{eq:upper-half-comparison-space} \widehat X_n \coloneqq\left(\sqrt nS^{n-1}\times(0,+\infty),D^{\mathrm{hor}}, \sigma_{n-1}\otimes (Y_n)_*\mathbb P\right). \end{equation}
If $\beta_n$ is bounded above and $b_n>0$ is bounded above, then
\begin{equation}\label{eq:horospherical-box-transfer} \Box(b_n\BetaBall,b_n\widehat X_n) \longrightarrow0. \end{equation}
\end{theorem}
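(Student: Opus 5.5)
The plan is to prove the pointwise estimate \eqref{eq:horospherical-error} by an exact comparison of the two $\arcosh$ arguments. Then \eqref{eq:horospherical-box-transfer} follows from the observable formula \eqref{eq:qm-box-distance}, using the map \eqref{eq:horospherical-map} as the coupling and discarding a small radial window near the origin.

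\emph{Step 1: exact rewriting of $\Omega$.} Substitute $s=\sqrt n/\cosh\rho$, $t=\sqrt n/\cosh\sigma$ and $r=\sqrt n|\theta-\eta|$ into \eqref{eq:upper-half-Funk}. The factors of $n$ cancel, and the argument of $\arcosh$ becomes
\[ B\coloneqq\cosh\bigl(\log\cosh\rho-\log\cosh\sigma\bigr)+\tfrac12\cosh\rho\cosh\sigma\,|\theta-\eta|^2 . \]
The logarithmic term $\log(s/t)=\log\cosh\sigma-\log\cosh\rho$ is exactly the endpoint increment in \eqref{eq:master-Funk}, so it cancels.

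By \eqref{eq:master-Funk}, the left side of \eqref{eq:horospherical-error} is therefore $|\arcosh A-\arcosh B|$, where
\[ A=\cosh(\rho-\sigma)+\tfrac12\sinh\rho\sinh\sigma|\theta-\eta|^2 , \]
and both $A,B\ge1$.

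\emph{Step 2: a uniform ratio bound.} For $\rho,\sigma\ge R$ we have $\sinh\rho\sinh\sigma=\cosh\rho\cosh\sigma\,(1+O(e^{-2R}))$. Also $\log\cosh\rho=\rho-\log2+O(e^{-2\rho})$, so the two cosh arguments differ by $O(e^{-2R})$, and $\cosh a/\cosh b\le e^{|a-b|}$. Hence $e^{-c}\le A/B\le e^{c}$ with $c=c(R)=O(e^{-2R})$.

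The delicate point is that $\arcosh$ is not Lipschitz near $1$. I will instead use $\cosh(u+v)\ge\cosh u\cosh v$, which gives $\arcosh(\lambda B)\le\arcosh B+\arcosh\lambda$ for $\lambda\ge1$, $B\ge 1$. Applying this in both directions yields $|\arcosh A-\arcosh B|\le\arcosh(e^{c})=O(\sqrt c)=O(e^{-R})$. This defines $\eps(R)$, which is independent of $n$, $\theta$ and $\eta$.

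\emph{Step 3: Box transfer.} The map $x=\tanh\rho\,\theta\mapsto(\sqrt n\theta,\sqrt{n(1-|x|^2)})$ is a Borel bijection from $\B^n\setminus\{0\}$ onto its image. By \eqref{eq:gamma-realization} and \eqref{eq:height-law} it pushes $\mu_{n,\beta_n}$ to $\sigma_{n-1}\otimes(Y_n)_*\mathbb P$. Let $\pi$ be the graph coupling, and let $S$ be the closed graph over $\{\rho\ge R\}$.

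Then $1-\pi(S)=\mathbb P(U_n/V_n<\sinh^2R)$. Since $U_n/n\to1/2$ in probability and $V_n$ is stochastically bounded when $\beta_n$ is bounded above, this mass tends to $0$ for each fixed $R$. On $S$ the two directed distances, multiplied by $b_n$, differ by at most $b_n\eps(R)$ in both orders of the arguments.

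To control $(d_\infty^S)_H$ of the observable families, take $f\in\Lipplus$ on one side. Form its one-sided McShane extension from $S$ with respect to the other distance,
\[ g(w)\coloneqq\inf_{v\in S}\bigl(f(v)+d'(v,w)\bigr). \]
This function is in $\Lipplus$ for $d'$ and differs from $f$ on $S$ by at most the distortion $b_n\eps(R)$. This is the approximate one-sided extension recorded in \Cref{app:transfer-tools}, which also handles Borel measurability; the symmetric argument treats the other family. Thus $\Box\le\max\{\mathbb P(\mathbf R_n<R),\,2b_n\eps(R)\}$. Letting $n\to\infty$ and then $R\to\infty$, using that $b_n$ is bounded, gives \eqref{eq:horospherical-box-transfer}.

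I expect the main obstacle to be Step 2, namely the non-Lipschitz behavior of $\arcosh$ at $1$ for nearby points, which the multiplicative inequality is meant to resolve. The extension step in Step 3 is routine given the appendix tools.
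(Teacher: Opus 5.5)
Your proposal is correct and follows the paper's proof essentially step for step. Steps 1--2 reproduce the argument of \Cref{lem:general-horospherical-approximation}: the uniform ratio bound $e^{-c}\le A/B\le e^{c}$, followed by $\arcosh(\lambda B)\le\arcosh B+\arcosh\lambda$. Step 3 uses the same measure-preserving horospherical map and discards the same radial window near the origin. The only difference is presentational: the paper chooses $R_n\to\infty$ diagonally and cites the relation characterization of box distance, whereas you take iterated limits and derive that characterization directly from \eqref{eq:qm-box-distance} through the one-sided McShane extension.
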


\begin{proof}
Applying \Cref{lem:general-horospherical-approximation} with $L=R$ proves \eqref{eq:horospherical-error} with $\eps(R)\coloneqq\delta(R)$.

The Gamma representation gives $U_n/n\to1/2$ in probability as $n\to\infty$.  Boundedness of $\mathbb EV_n=\beta_n$ and Markov's inequality give $V_n/n\to0$ in probability as $n\to\infty$.  Therefore, \eqref{eq:rho-gamma} implies that $\mathbf R_n\to+\infty$ in probability.  Choose $R_n\to+\infty$ such that $\mathbb P(\mathbf R_n\ge R_n)\to1$ as $n\to\infty$.  Radial--angular independence and \eqref{eq:height-law} show that the map in \eqref{eq:horospherical-map} is measure preserving from the Funk ball to $\widehat X_n$.  Applying \Cref{lem:general-horospherical-approximation} with $L=R_n$ bounds its distortion on $\{\rho\ge R_n\}$ after scaling by $(\sup_nb_n)\delta(R_n)$.  The exceptional mass and this distortion both tend to zero as $n\to\infty$.  The relation characterization of box distance now gives \eqref{eq:horospherical-box-transfer}.  This completes the proof.
\end{proof}

If $\beta_n\to\beta\in(0,+\infty)$, then \eqref{eq:height-law} gives
\begin{equation}\label{eq:height-fixed-beta} Y_n\overset{\mathrm{d}}{\longrightarrow}\sqrt{2V}, \qquad V\sim\Gamma(\beta,1), \end{equation}
which is the chi distribution $\chi_{2\beta}$.

\begin{theorem}[Fixed-beta lower and upper inclusions]\label{thm:fixed-beta-convergence}
Suppose that $\beta_n\to\beta\in(0,+\infty)$.  Every $H_\beta^k$ belongs to each subsequential pyramid limit of $\widehat X_n$.  Conversely, every weak limit of bounded one-sided measurements of $\widehat X_n$ belongs to $\mathcal M(\mathcal H_\beta;N,R)$ for the corresponding $N$ and $R$.  Consequently,
\[ \Pyr(\widehat X_n)\longrightarrow\mathcal H_\beta. \]
\end{theorem}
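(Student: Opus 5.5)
The plan is to recognise $\widehat X_n$ as a Funk horocone over the base $Z_n\coloneqq(\sqrt nS^{n-1},|\cdot|,\sigma_{n-1})$ with height law $\eta_n\coloneqq(Y_n)_*\mathbb P$. In the notation of \Cref{eq:general-horocone-space} this reads $\widehat X_n=\mathsf H(Z_n,\eta_n)$, where the base distance enters $D^{\mathrm{hor}}$ only through $|z-z'|$. Then $\Pyr(\widehat X_n)$ sits between $\Pyr(\mathsf H(Z_n,\eta_n))$ and $\mathcal H(\Pyr(Z_n),\eta_n)$. By functoriality in \Cref{thm:general-horocone}, every $\mathsf H(W,\eta_n)$ with $W\preceq Z_n$ is dominated by $\mathsf H(Z_n,\eta_n)$. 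Hence the two coincide:
\[
 \mathcal H(\Pyr(Z_n),\eta_n)=\Pyr(\widehat X_n).
\]
The chord spheres satisfy $\Pyr(Z_n)\to\mathcal G_0$ by \Cref{thm:transformed-sphere-gaussian}, and $\eta_n\Rightarrow\chi_{2\beta}$ by \eqref{eq:height-fixed-beta}. Therefore \Cref{thm:joint-horocone-pyramid-continuity} gives
\[
 \Pyr(\widehat X_n)\to\mathcal H(\mathcal G_0,\chi_{2\beta}).
\]

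It remains to identify $\mathcal H(\mathcal G_0,\chi_{2\beta})$ with $\mathcal H_\beta=\mathcal H_{\chi_{2\beta}}$; this also yields the two stated inclusions separately.

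For the lower inclusion, note that $\mathbb R^k_\gamma\in\mathcal G_0$. Hence $H_\beta^k=\mathsf H(\mathbb R^k_\gamma,\chi_{2\beta})$ lies in $\mathcal H(\mathcal G_0,\chi_{2\beta})$, the full support of $\chi_{2\beta}$ making $\supp\eta=(0,\infty)$. Weak convergence then gives $H_\beta^k$ in every subsequential limit via \eqref{eq:weak-pyramid-inner}.

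For the upper inclusion, take $W\in\mathcal G_0$. It is a Box limit of $W_j\preceq\mathbb R^{k_j}_\gamma$. Functoriality gives $\mathsf H(W_j,\chi_{2\beta})\preceq H_\beta^{k_j}$, and the joint Box continuity in \Cref{thm:general-horocone} gives $\mathsf H(W_j,\chi_{2\beta})\to\mathsf H(W,\chi_{2\beta})$. Since $\mathcal H_\beta$ is Box closed, $\mathsf H(W,\chi_{2\beta})\in\mathcal H_\beta$, so $\mathcal H(\mathcal G_0,\chi_{2\beta})\subset\mathcal H_\beta$. The reverse inclusion is the lower-inclusion argument. Thus the two pyramids agree.

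The measurement form of the upper statement then follows from \eqref{eq:measurement-convergence-criterion} and the compactness of $\mathcal M(\mathcal H_\beta;N,R)$. Any weak limit of $\nu_n\in\mathcal M(\widehat X_n;N,R)$ lies within $(d_{\mathrm P})_H$-distance $o(1)$ of $\mathcal M(\mathcal H_\beta;N,R)$, and that set is closed.

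The main obstacle is the Box-continuity input for horocones over a non-compact base with unbounded $D^{\mathrm{hor}}$. The log-height term $\log(y/y')$ blows up near $y\to0$ and $y\to\infty$, so convergence of the base and of the height law must be upgraded to convergence of one-sided observables. I would use the cited \Cref{thm:general-horocone} and \Cref{thm:joint-horocone-pyramid-continuity} as black boxes here. If a direct argument were needed, I would truncate heights to $[\eps,1/\eps]$ and pass to bounded measurements with values in $[-R,R]^N$. On the truncated region, $D^{\mathrm{hor}}$ is uniformly continuous in $|z-z'|$ and in the heights, and the excluded mass is small uniformly in $n$ by tightness of $\eta_n$ on $(0,\infty)$. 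Tightness holds because $Y_n^2\to 2V$ with $V\sim\Gamma(\beta,1)$ having no atom at $0$.
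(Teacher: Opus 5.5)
Your proof is correct, but it takes a more direct route than the paper's. The paper never analyses $\widehat X_n$ on its own. It applies \Cref{thm:radial-law-transfer} to the Funk beta balls, using that $\chi_{2\beta}$ has full support, to get $\Pyr(\BetaBall)\to\mathcal H_{\chi_{2\beta}}=\mathcal H_\beta$. It then moves this limit back to the comparison spaces through $\Box(\BetaBall,\widehat X_n)\to0$ from \Cref{thm:horospherical-transfer}, and reads the two inclusions off from the resulting convergence.

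You stay on the comparison spaces throughout. You identify $\Pyr(\widehat X_n)$ with $\mathcal H(\Pyr(Z_n),\eta_n)$, feed sphere convergence and $\eta_n\Rightarrow\chi_{2\beta}$ into \Cref{thm:joint-horocone-pyramid-continuity}, and reprove the Gaussian-base specialization \eqref{eq:Gaussian-base-joint-specialization}. This is exactly the second half of the paper's proof of \Cref{thm:radial-law-transfer}, so your argument removes a round trip. Inside \Cref{thm:radial-law-transfer} the paper goes from $\widehat X_n$ to the Funk balls via \Cref{lem:general-horospherical-approximation} and \Cref{lem:bounded-kernel-transfer}, and the fixed-beta proof then comes back. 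Your route needs no horospherical approximation at all, and it shows that the statement is purely a fact about horocones. Your identification step is also shorter than the paper's: every member of $\Pyr(Z_n)$ is already dominated by $Z_n$, so functoriality applies without a Box approximation $W_m\to W$.

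The paper's route buys brevity and reaches its actual target directly. Quoting \Cref{thm:radial-law-transfer} as a black box yields the Funk-ball fixed-beta clause of \Cref{thm:main} in the same step. After your argument, you would still have to invoke \Cref{thm:horospherical-transfer} to pass from $\widehat X_n$ to $\BetaBall$.

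Two conventions are shared with the paper. The equality $\widehat X_n=\mathsf H(Z_n,\eta_n)$ holds only after restricting heights to $\supp\eta_n=(0,\sqrt n\,]$. And $\mathcal G_0$ is used as the ordinary Gaussian pyramid whenever it serves as a horocone base.
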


\begin{proof}
The Gamma representation, $U_n/n\to1/2$, and $V_n\Rightarrow V\sim\Gamma(\beta,1)$ give, by Slutsky's theorem,
$Y_n^2\Rightarrow2V$ in \eqref{eq:height-law}, and hence $Y_n\Rightarrow\chi_{2\beta}$.  This height law has full support
on $(0,+\infty)$.  Applying \Cref{thm:radial-law-transfer} to the radial laws of the Funk beta balls and using
\Cref{eq:Hbeta-height-specialization} therefore gives
\[
 \Pyr(\BetaBall)\longrightarrow\mathcal H_{\chi_{2\beta}}=\mathcal H_\beta.
\]
On the other hand, \Cref{thm:horospherical-transfer} with $b_n=1$ gives
$\Box(\BetaBall,\widehat X_n)\to0$.  Thus $\Pyr(\widehat X_n)$ also converges to $\mathcal H_\beta$, and the
two-inclusion characterization of pyramid convergence gives the lower and upper inclusions stated in the theorem.  This
completes the proof.
\end{proof}

The Funk-beta-ball convergence obtained in the proof of \Cref{thm:fixed-beta-convergence} is the fixed-beta clause of
\Cref{thm:main}.

\section{The small-beta critical star phase}\label{sec:small-beta}

Exact cancellation drives the small-beta clause of \Cref{thm:main}: cross-ray distances become independent of the source height in the limit, and the logarithmic height becomes an exponential variable.  Finite angular blocks give the lower inclusion, while spherical proximity and finite quantization give the upper inclusion.

\Cref{tab:poincare-funk-small-beta} isolates the directed effect of the endpoint potential from the symmetric beta-ball phase diagram \cite{yokota-poincare-beta-balls}.

The symbol $\operatorname{Exp}(1)$ denotes the exponential distribution of rate one, with density $e^{-t}$ on $[0,+\infty)$.
In the Poincar\'e column, $\tau$ denotes a finite limit of $\beta_n\log n$.

\begin{table}[ht]
\centering
\small
\setlength{\tabcolsep}{5pt}
\renewcommand{\arraystretch}{1.15}
\begin{tabular}{@{}p{0.25\textwidth}p{0.31\textwidth}p{0.31\textwidth}@{}}
\toprule
\raggedright Feature & \raggedright Poincar\'e (finite product limit) & \raggedright Forward Funk \tabularnewline
\midrule
Geometry & symmetric & directed \\
Small-beta scale & $\beta_n$ & $2\beta_n$ \\
Radial mark & $\tau+\operatorname{Exp}(1)$ & $\operatorname{Exp}(1)$ \\
Cross-branch distance & $s+t-\tau$ & $2t$ \\
$\beta_n\log n$ & survives as $\tau$ & cancels exactly \\
\bottomrule
\end{tabular}
\caption{The small-beta distinction between the symmetric and forward metrics.}
\label{tab:poincare-funk-small-beta}
\end{table}

Define
\begin{equation}\label{eq:small-beta-radial} S_n\coloneqq-2\beta_n\log Y_n =-\beta_n\log\frac{nV_n}{U_n+V_n}. \end{equation}

\begin{theorem}[Small-shape Gamma and exponential radial limits]\label{thm:small-beta-radial}
Suppose that $\beta_n\to0$.  Then
\begin{align}
\beta_n\log^+\frac{V_n}{n}&\longrightarrow0, \label{eq:small-beta-upper-gamma}\\
\beta_n\log\frac{U_n+V_n}{n}&\longrightarrow0 \label{eq:small-beta-gamma-sum}
\end{align}
in probability, and
\begin{equation}\label{eq:Sn-exp} S_n\overset{\mathrm{d}}{\longrightarrow}\Exp(1). \end{equation}
\end{theorem}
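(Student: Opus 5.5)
The plan is to work directly with the independent Gamma variables $U_n\sim\Gamma(n/2,1)$ and $V_n\sim\Gamma(\beta_n,1)$ and to split
\[ S_n=-\beta_n\log V_n-\beta_n\log\frac{n}{U_n+V_n}, \]
so that \eqref{eq:Sn-exp} reduces to two facts: $-\beta_n\log V_n\Rightarrow\Exp(1)$ for any $\beta_n\to0$, and the remaining $n$-dependent logarithm is negligible in the sense of \eqref{eq:small-beta-gamma-sum}. Slutsky's theorem then combines them. The point to stress is that the dimensional terms $\beta_n\log n$ cancel exactly inside $\log\bigl(n/(U_n+V_n)\bigr)$. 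No rate condition on $\beta_n\log n$ ever enters, because we never separate $\log n$ from $\log(U_n+V_n)$.

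The small-shape Gamma limit is a direct distribution function computation. For $t>0$,
\[ \mathbb P(-\beta_n\log V_n>t)=\mathbb P(V_n<e^{-t/\beta_n})=\frac{1}{\Gamma(\beta_n)}\int_0^{e^{-t/\beta_n}}v^{\beta_n-1}e^{-v}\,dv. \]
Since $e^{-v}\in[e^{-1},1]$ on the range $v\le1$, this equals
\[ \frac{e^{-t}}{\beta_n\Gamma(\beta_n)}\,(1+o(1))=\frac{e^{-t}}{\Gamma(1+\beta_n)}\,(1+o(1))\longrightarrow e^{-t}. \]
Similarly, $\mathbb P(V_n<1)\to1$, so the negative part of $-\beta_n\log V_n$ vanishes in probability. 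This gives the $\Exp(1)$ limit.

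For \eqref{eq:small-beta-upper-gamma}, I would use Markov's inequality: $\mathbb E V_n=\beta_n$, so $\mathbb P(V_n>n\varepsilon)\le\beta_n/(n\varepsilon)\to0$. In fact $\mathbb P(V_n>n)\to0$, and on the complementary event $\log^+(V_n/n)=0$.

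For \eqref{eq:small-beta-gamma-sum}, the weak law of large numbers (or Chebyshev, with variance $n/2$) gives $U_n/n\to1/2$ in probability, and $V_n/n\to0$ in probability as above. Hence $(U_n+V_n)/n\to1/2$, so $\log\bigl((U_n+V_n)/n\bigr)$ is tight, and multiplying by $\beta_n\to0$ gives convergence to zero in probability.

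I expect no serious obstacle. The only delicate step is the uniformity in the Gamma tail integral, that is, checking that $\beta_n\Gamma(\beta_n)=\Gamma(1+\beta_n)\to1$ and that the factor $e^{-v}$ contributes $1+o(1)$ because the integration range shrinks to zero. Once this is in place, the identity
\[ S_n=-\beta_n\log V_n+\beta_n\log\frac{U_n+V_n}{n} \]
and Slutsky's theorem give \eqref{eq:Sn-exp}.
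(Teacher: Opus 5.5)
Your proof is correct. Its skeleton matches the paper's: the splitting $S_n=-\beta_n\log V_n+\beta_n\log\frac{U_n+V_n}{n}$, Markov's inequality for $V_n$, the law of large numbers for $U_n$, and Slutsky's theorem at the end.

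The routes differ in two places.

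For the exponential limit, the paper uses the Mellin transform of the Gamma law, $\mathbb E\exp(-it\beta_n\log V_n)=\Gamma((1-it)\beta_n)/\Gamma(\beta_n)\to1/(1-it)$, together with L\'evy's continuity theorem. You instead compute the lower tail of $V_n$ directly. Your route is real-variable and elementary, and it gives explicit two-sided bounds
\[ \frac{e^{-x_n}e^{-t}}{\Gamma(1+\beta_n)}\le\mathbb P(-\beta_n\log V_n>t)\le\frac{e^{-t}}{\Gamma(1+\beta_n)},\qquad x_n\coloneqq e^{-t/\beta_n}. \]
The paper's route is a one-line computation, but it relies on complex Gamma asymptotics and the continuity theorem. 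One correction: the bound $e^{-v}\in[e^{-1},1]$ you first quote only gives a factor of order one. The $1+o(1)$ comes from $e^{-v}\in[e^{-x_n},1]$ with $x_n\to0$, which you do note at the end.

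For \eqref{eq:small-beta-gamma-sum}, the paper writes $\log\frac{U_n+V_n}{n}=\log\frac{U_n}{n}+\log\bigl(1+V_n/U_n\bigr)$. On $\{U_n/n\ge1/4\}$ it bounds the second term by $\log5+\log^+(V_n/n)$, so that \eqref{eq:small-beta-upper-gamma} is the input controlling the sum; there it is proved with the sharper Markov bound $\frac{\beta_n}{n}e^{-\eps/\beta_n}$. You instead use $(U_n+V_n)/n\to1/2$ in probability and the continuous mapping theorem. This is shorter and shows that only $V_n/n\to0$ in probability is needed, so \eqref{eq:small-beta-upper-gamma} becomes a by-product rather than an ingredient.
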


\begin{proof}
For every $\eps>0$, Markov's inequality gives
\[ \mathbb P\left(\beta_n\log^+\frac{V_n}{n}>\eps\right) \le\mathbb P\left(V_n>ne^{\eps/\beta_n}\right) \le\frac{\beta_n}{n}e^{-\eps/\beta_n}, \]
which tends to zero.  Since $U_n/n\to1/2$ in probability,
$\beta_n\log(U_n/n)\to0$.  On the event $U_n/n\ge1/4$,
\[ 0\le\log\left(1+\frac{V_n}{U_n}\right) \le\log5+\log^+\frac{V_n}{n}. \]
Combining this bound with
\[ \log\frac{U_n+V_n}{n} =\log\frac{U_n}{n} +\log\left(1+\frac{V_n}{U_n}\right) \]
proves \eqref{eq:small-beta-gamma-sum}.

For every $t\in\R$,
\[ \mathbb E\exp(-it\beta_n\log V_n) =\frac{\Gamma((1-it)\beta_n)}{\Gamma(\beta_n)} \longrightarrow\frac1{1-it}, \]
because $z\Gamma(z)\to1$ as $z\to0$ with positive real part.  The limiting function is the characteristic function of $\Exp(1)$.  L\'evy's continuity theorem gives $-\beta_n\log V_n\overset{\mathrm{d}}{\longrightarrow}\Exp(1)$, and \eqref{eq:small-beta-gamma-sum} transfers this limit to $S_n$.  This completes the proof.
\end{proof}

\begin{theorem}[Same-ray and cross-ray kernels]\label{thm:small-beta-kernels}
Let $I\Subset(0,+\infty)$ be compact.  For $s,t\in I$, set
$y=e^{-s/(2\beta_n)}$ and $y'=e^{-t/(2\beta_n)}$.  Then
\begin{equation}\label{eq:small-same-ray} 2\beta_n\Omega_{y,y'}(0)=2(t-s)_+. \end{equation}
For every $0<c<C<+\infty$, uniformly for $s,t\in I$ and $c\le r\le C$,
\begin{equation}\label{eq:small-cross} 2\beta_n\Omega_{y,y'}(r)\longrightarrow2t. \end{equation}
\end{theorem}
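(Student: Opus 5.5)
Both claims come from writing $\Omega_{y,y'}(r)$ in logarithmic coordinates, using $\log y=-s/(2\beta_n)$ and $\log y'=-t/(2\beta_n)$. The endpoint term is then exact:
\[
2\beta_n\log\frac{y}{y'}=t-s .
\]
So everything reduces to the asymptotics of $2\beta_n$ times the $\arcosh$ term. Throughout, put $\lambda_n\coloneqq 1/(2\beta_n)\to+\infty$.

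For the same-ray identity \eqref{eq:small-same-ray}, take $r=0$. The argument of the $\arcosh$ is
\[
\frac{y^2+y'^2}{2yy'}=\cosh\bigl(\log(y/y')\bigr),
\]
so the $\arcosh$ equals $|\log y-\log y'|=\lambda_n|t-s|$. Multiplying by $2\beta_n$ gives $|t-s|+(t-s)=2(t-s)_+$ exactly, with no limit needed. (The hypothesis $s,t\in I$ is not used here; it is only needed for uniformity in the cross-ray case.)

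For the cross-ray limit \eqref{eq:small-cross}, fix $s,t\in I\subset[a,A]$ with $a>0$, and $r\in[c,C]$. Since $y,y'\le e^{-a\lambda_n}\to0$ uniformly, the argument of the $\arcosh$ is
\[
\frac{r^2+y^2+y'^2}{2yy'}=\frac{r^2(1+o(1))}{2yy'},
\]
uniformly, and it tends to $+\infty$. I then use $\arcosh w=\log(2w)+O(w^{-2})$ as $w\to\infty$, which gives
\[
\arcosh(\cdot)=\log r^2-\log y-\log y'+o(1)=2\log r+\lambda_n(s+t)+o(1).
\]
Multiplying by $2\beta_n$ yields
\[
s+t+2\beta_n\bigl(2\log r+o(1)\bigr)\longrightarrow s+t ,
\]
uniformly, because $|\log r|\le\max\{|\log c|,|\log C|\}$ is bounded and $\beta_n\to0$. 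Adding the potential term $t-s$ gives $2t$.

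The only point needing care is the uniformity of the $o(1)$ terms. This is where the compactness of $I$ away from $0$ and the bounds $c\le r\le C$ enter. I would control it by the explicit two-sided bounds $\log w\le\arcosh w\le\log(2w)$ for $w\ge1$, applied to $w=(r^2+y^2+y'^2)/(2yy'\,)$. Together with the elementary estimate
\[
\log\frac{r^2}{2yy'}\le\log w\le\log\frac{C^2+2}{2yy'},
\]
these sandwich $2\beta_n\arcosh(w)$ between $s+t$ plus error terms of order $\beta_n$, which are bounded independently of $s,t\in I$ and $r\in[c,C]$. That settles the uniform convergence.
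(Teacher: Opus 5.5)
Your proposal is correct and follows essentially the same route as the paper. Both arguments use the exact identity $\arcosh\bigl(\cosh\log(y/y')\bigr)=|\log(y/y')|$ for the same-ray kernel; for the cross-ray kernel both compare $\arcosh w$ with $\log(2w)$, use $2\beta_n\log(2w)=s+t+2\beta_n\log(r^2+y^2+y'^2)$ with $c^2\le r^2+y^2+y'^2\le C^2+2$, and add the exact endpoint increment $t-s$. Your sandwich $\log w\le\arcosh w\le\log(2w)$ gets uniformity a bit more economically than the paper's step $w\to+\infty$ (since $2\beta_n\log 2\to0$, it does not even need $\min I>0$), but the structure is the same.
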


\begin{proof}
At $r=0$, the symmetric term is $|\log(y/y')|$, and therefore
\[ 2\beta_n\Omega_{y,y'}(0)=|t-s|+(t-s)=2(t-s)_+. \]
This proves \eqref{eq:small-same-ray}.  For the cross-ray assertion, the lower bounds on $s,t$, and $r$ make
\[ \frac{r^2+y^2+y'^2}{2yy'}\longrightarrow+\infty \]
uniformly.  More explicitly,
\[ \frac{r^2+y^2+y'^2}{2yy'} \ge\frac{c^2}{2}\exp\left(\frac{s+t}{2\beta_n}\right) \ge\frac{c^2}{2}\exp\left(\frac{\min I}{\beta_n}\right), \qquad c^2\le r^2+y^2+y'^2\le C^2+2. \]
The first bound makes the inverse-hyperbolic-cosine remainder below vanish uniformly, while the second pair makes $\log(r^2+y^2+y'^2)$ uniformly bounded.  The identity
\[ \arcosh w=\log(2w)+ \log\frac{1+\sqrt{1-w^{-2}}}{2} \]
shows that the difference between the inverse hyperbolic cosine and the logarithm tends uniformly to zero.  Moreover,
\[ 2\beta_n\log\frac{r^2+y^2+y'^2}{yy'} =s+t+2\beta_n\log(r^2+y^2+y'^2) \longrightarrow s+t \]
uniformly on the stated set.  Adding the endpoint increment
$2\beta_n\log(y/y')=t-s$ proves \eqref{eq:small-cross}.
	This completes the proof.
\end{proof}

On a three-branch generator, let $O$ be the common vertex and take $x=(s,1)$ and $y=(t,3)$ with $0<s<t$.  \Cref{fig:critical-directed-star} displays their cross-branch asymmetry and the same-branch kernel in a separate inset.

\begin{figure}[H]
\centering
\begin{tikzpicture}[
  x=0.95cm,
  y=0.95cm,
  >=Latex,
  ray/.style={draw=black!65, line width=0.9pt, -{Latex[length=2.2mm]}},
  directed/.style={draw=black, line width=1pt, -{Latex[length=2mm]}},
  reverse/.style={draw=black!55, line width=0.9pt, -{Latex[length=2mm]}},
  marked point/.style={circle, fill=black, inner sep=1.6pt}
]
  \coordinate (O) at (0,0);
  \coordinate (E1) at (4.2,2.1);
  \coordinate (E2) at (4.6,0);
  \coordinate (E3) at (4.2,-2.1);
  \coordinate (X) at (1.8,0.9);
  \coordinate (Y) at (2.8,-1.4);

  \draw[ray] (O) -- (E1);
  \draw[ray] (O) -- (E2);
  \draw[ray] (O) -- (E3);

  \node[marked point, label={[xshift=-2pt,yshift=2pt]above left:$O$}] at (O) {};
  \node[marked point, label=above left:{$x=(s,1)$}] at (X) {};
  \node[marked point, label=below left:{$y=(t,3)$}] at (Y) {};

  \node[anchor=west] at (E1) {$1\ (p_1)$};
  \node[anchor=west] at (E2) {$2\ (p_2)$};
  \node[anchor=west] at (E3) {$3\ (p_3)$};

  \draw[directed] (X) .. controls (4.0,0.7) and (4.0,-0.7) ..
    node[pos=0.65, left, fill=white, inner sep=1.5pt] {$2t$} (Y);
  \draw[reverse] (Y) .. controls (4.7,-0.7) and (4.7,0.7) ..
    node[pos=0.65, right, fill=white, inner sep=1.5pt] {$2s$} (X);

  \coordinate (L) at (6.3,0);
  \coordinate (E) at (11.3,0);
  \coordinate (A) at (7.5,0);
  \coordinate (B) at (9.7,0);
  \draw[ray] (L) -- (E);
  \node[marked point, label=below:{$(s,i)$}] at (A) {};
  \node[marked point, label=below:{$(t,i)$}] at (B) {};
  \draw[directed] ([yshift=5pt]A) -- node[midway, above, fill=white, inner sep=1pt]
    {$2(t-s)$} ([yshift=5pt]B);
  \draw[reverse] ([yshift=-5pt]B) -- node[midway, below, fill=white, inner sep=1pt]
    {$0$} ([yshift=-5pt]A);
  \path (A) -- node[midway, above=14pt] {$0<s<t$} (B);
\end{tikzpicture}
\caption{A three-branch generator $T_3^{\rhd}(p_1,p_2,p_3)$ of the critical star pyramid.  Branch $i$ carries the radial measure $p_i e^{-s}\,ds$.  The left diagram shows that the cross-branch distances from $x$ to $y$ and from $y$ to $x$ are $2t$ and $2s$, respectively.  For $0<s<t$, the right inset shows distances $2(t-s)$ outward and $0$ inward on one branch.}
\label{fig:critical-directed-star}
\end{figure}

\Cref{thm:spherical-blocks-nearby} supplies the two angular inputs used below: separated blocks with prescribed limiting masses and bounded scaled distance between two sets of uniformly positive measure.

\begin{theorem}\label{thm:finite-star-lower}
For every $m\ge1$ and every positive weight vector $(p_1,\ldots,p_m)$, the critical star $T_m^{\rhd}(p_1,\ldots,p_m)$ belongs to every subsequential pyramid limit of $2\beta_n\widehat X_n$.
\end{theorem}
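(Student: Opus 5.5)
We show that $T_m^{\rhd}(p_1,\ldots,p_m)$ is a Box limit of spaces dominated by $2\beta_n\widehat X_n$. By downward closedness and Box closedness of subsequential pyramid limits, this places $T_m^{\rhd}$ in every such limit. Since $T_m^{\rhd}$ is the Box limit of its truncations (the mass beyond $s>L$ is $e^{-L}$), it suffices to construct, for each fixed compact $I=[a,L]\subset(0,+\infty)$ and $\eps>0$, exact dominations of approximants of $T_m^{\rhd}$ by $2\beta_n\widehat X_n$ up to Box error $O(\eps)$ for all large $n$.

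\textbf{Construction.} Use \Cref{thm:spherical-blocks-nearby} to choose disjoint Borel blocks $A_1^n,\ldots,A_m^n\subset\sqrt nS^{n-1}$ that cover the sphere up to vanishing mass and have $\sigma_{n-1}(A_i^n)\to p_i$. We need the Euclidean gap $\inf\{|z-z'|:z\in A_i^n,z'\in A_j^n\}\ge c>0$ for $i\ne j$, together with a bound $C$ on all distances. The upper bound is automatic up to a small exceptional set: Gaussian concentration gives $|z-z'|\le 2\sqrt n$ always, and after passing to scaled distance this is harmless since $r$ enters only logarithmically. Define $p_n\colon \sqrt nS^{n-1}\times(0,\infty)\to[0,\infty)\times\{1,\dots,m\}$ by $(z,y)\mapsto(S(y),i)$ for $z\in A_i^n$, with $S(y)=(-2\beta_n\log y)_+$. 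By \Cref{thm:small-beta-radial}, $S_n=-2\beta_n\log Y_n\Rightarrow\Exp(1)$, and by independence the pushforward of $\sigma_{n-1}\otimes(Y_n)_*\mathbb P$ converges weakly to $e^{-s}ds\otimes\sum p_i\delta_i$.

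\textbf{Distortion.} On the window where both heights lie in $I$ (i.e.\ $y=e^{-s/(2\beta_n)}$ with $s\in I$), consider the scaled directed distance $2\beta_nD^{\mathrm{hor}}=2\beta_n\Omega_{y,y'}(|z-z'|)$. Across distinct blocks, \Cref{thm:small-beta-kernels} \eqref{eq:small-cross} gives uniform convergence to $2t$, using $c\le r\le C$; the logarithmic term $2\beta_n\log(r^2+\dots)$ still vanishes when $r\le 2\sqrt n$ only if $\beta_n\log n\to0$. This is exactly where care is needed, so instead we apply \Cref{thm:spherical-blocks-nearby} to restrict to pairs at bounded scaled distance. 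Within one block, monotonicity of $\Omega_{y,y'}$ in $r$ gives the lower bound $2(t-s)_+$ from \eqref{eq:small-same-ray}. The matching upper bound is the obstacle: same-block points can be far apart, with $|z-z'|$ of order $\sqrt n$, and then the distance is $\approx 2t$, not $2(t-s)_+$.

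\textbf{Repair via measurements.} Rather than seek a pointwise-isometric relation, we work through \eqref{eq:measurement-membership-criterion} and \eqref{eq:coordinate-model-membership}, and realize each bounded measurement of $T_m^{\rhd}$ approximately as a measurement of $2\beta_n\widehat X_n$. A measurement $F$ of $T_m^{\rhd}$ is one-sided $1$-Lipschitz. On each branch it satisfies $F(t,i)-F(s,i)\le 2(t-s)_+$, so $F$ is nonincreasing-compatible inward. Across branches it satisfies $F(t,j)-F(s,i)\le2t$. Define $G(z,y)=F(S(y),i)$ on $A_i^n$ with truncation. Checking $G(z',y')-G(z,y)\le 2\beta_nD^{\mathrm{hor}}+o(1)$ needs only lower bounds on the distance:
\begin{itemize}
\item same block: $2\beta_nD^{\mathrm{hor}}\ge2(t-s)_+$ for every $r\ge0$, by monotonicity;
\item different blocks: $r\ge c$ and $2\beta_nD^{\mathrm{hor}}\ge 2\beta_n\Omega_{y,y'}(c)\to2t$.
\end{itemize}
Lower bounds do not suffer the large-$r$ problem, so no condition on $\beta_n\log n$ arises. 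Heights outside $I$ carry mass at most $\eps$ after the radial limit. We handle them by clamping $S$ to $I$, which preserves both inequalities, or by absorbing them into the Prokhorov error. A small additive slack can be removed using the approximate one-sided extension of \Cref{app:transfer-tools}. Then $F_*\mu_{T}$ is within $O(\eps)$ of $G_*$ of the measure of $\widehat X_n$.

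\textbf{Conclusion.} This puts every bounded measurement of $T_m^{\rhd}$ in the limit of $\mathcal M(2\beta_n\widehat X_n;N,R)$. By \eqref{eq:measurement-convergence-criterion} and \eqref{eq:measurement-membership-criterion}, $T_m^{\rhd}$ lies in every subsequential limit. The main obstacle is the separation step: producing blocks with uniformly positive pairwise gap $c$ and the prescribed masses. For this we rely on \Cref{thm:spherical-blocks-nearby}, for instance via caps or slabs in a fixed coordinate, so that the gap is in absolute rather than $\sqrt n$-scaled terms.
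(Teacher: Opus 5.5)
Your final argument, the measurement repair, is correct and is essentially the paper's proof. The map $(z,y)\mapsto(S,i)$ on the separated blocks of \Cref{thm:spherical-blocks-nearby}(a) needs only lower bounds on $2\beta_nD^{\mathrm{hor}}$ (monotonicity with \eqref{eq:small-same-ray} inside a block, the cutoff $c$ with \eqref{eq:small-cross} across blocks), and composing with measurements and extending by \Cref{lem:one-sided-extension} is just \Cref{lem:high-mass-domination} written out, with your $O(\eps)$-then-closedness scheme in place of the diagonal selection \Cref{lem:diagonal-high-mass-selection}. Two harmless corrections: for $m\ge2$ a fixed gap $c>0$ only allows block masses $\eps$-close to $p_i$, not $\sigma_{n-1}(A_i^n)\to p_i$ with vanishing omitted mass, and clamping $S$ from below at $\min I>0$ does not preserve the cross-block inequality exactly but costs up to $2\min I$.
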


\begin{proof}
Fix a compact interval $I=[\delta,M]\Subset(0,+\infty)$ and a positive mass accuracy.  Use \Cref{thm:spherical-blocks-nearby}(a) to choose angular blocks with the prescribed approximate masses and a fixed positive lower bound for their scaled chord separation.  On
\[ A_n\coloneqq\{(\sqrt n\theta,Y_n)\colon \theta\in\textstyle\bigcup_iB_{n,i},\ S_n\in I\}, \]
define
\[ P_n(\sqrt n\theta,Y_n)\coloneqq(S_n,i) \qquad(\theta\in B_{n,i}). \]
For two points in the same block, monotonicity of $r\mapsto\Omega_{y,y'}(r)$ and \eqref{eq:small-same-ray} give
\[ d_{\rhd}(P_n(\sqrt n\theta,Y_n),P_n(\sqrt n\eta,Y_n')) \le2\beta_nD^{\mathrm{hor}}((\sqrt n\theta,Y_n),(\sqrt n\eta,Y_n')). \]
For distinct blocks, evaluate the source distance at the fixed lower separation cutoff.  Monotonicity and \eqref{eq:small-cross} give the same inequality with an additive error tending to zero, uniformly for $S_n,S_n'\in I$.

The radial and angular variables are independent.  By \Cref{thm:small-beta-radial}, the restricted pushforward measures tend, as the block-mass accuracy tends to zero, to
\[ (e^{-s}\,ds|_I)\otimes\sum_{i=1}^mp_i\delta_i. \]
As the intervals exhaust $(0,+\infty)$ and the block-mass accuracy tends to zero, the omitted mass and additive error vanish while these restricted measures approach the full star measure.  The diagonal selection in \Cref{lem:diagonal-high-mass-selection}, followed by \Cref{lem:high-mass-domination}, proves the assertion.
	This completes the proof.
\end{proof}

\Cref{thm:finite-profile-quantization} collects the compactness, cross-profile comparison, and finite quantization estimates needed for the upper inclusion.

\begin{theorem}[Finite-star family is a pyramid]\label{thm:finite-star-pyramid}
The set $\mathcal S_{\rhd}$ in \eqref{eq:critical-star-pyramid} is an asymmetric pyramid.
\end{theorem}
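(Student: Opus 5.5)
The plan follows the proof of \Cref{thm:gaussian-horocone-pyramids}: show that the generating family $\{T_m^{\rhd}(p_1,\ldots,p_m)\}$ is directed under exact domination, then invoke the directed-union closure principle \Cref{thm:directed-union-closure}. Before that, I check that each $T_m^{\rhd}(p)$ is a qm-space. For the triangle inequality of $d_{\rhd}$, I would split cases by branch labels.
\begin{itemize}
\item All three points on one branch: the inequality reduces to $(t-s)_+\le(u-s)_++(t-u)_+$.
\item A cross step on the left, $i\ne j$: the left side is at most $2t\le 2(t-u)_++2u\le d_{\rhd}(x,z)+d_{\rhd}(z,y)$ whenever $z=(u,k)$ has $d_{\rhd}(x,z)\ge 2u$. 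This holds when $k\ne i$. When $k=i$, the point $z$ lies on $i\ne j$, so $d_{\rhd}(z,y)=2t$ already.
\item Same branch on the left, with the middle point on another branch: $2(t-s)_+\le 2t$.
\end{itemize}
The identified vertex $(0,i)$ is consistent because every formula gives $2t$ from it and $0$ to it. The symmetrization is $2|t-s|$ on a branch and $2\max\{s,t\}$ across branches. This is bi-Lipschitz to the usual star metric, hence complete and separable. The measure has full support.

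For directedness, take two weight vectors $p=(p_1,\ldots,p_m)$ and $p'=(p'_1,\ldots,p'_{m'})$. Form the common refinement: intervals of length $p_i$ and $p'_j$ tiling $[0,1]$ produce nonempty cells $C_{ij}$ with masses $r_{ij}>0$. Consider $T^{\rhd}(r)$ with one branch per nonempty cell. The map $(s,(i,j))\mapsto(s,i)$ pushes the measure forward to that of $T_m^{\rhd}(p)$. It is $1$-Lipschitz for $d_{\rhd}$ as follows.
\begin{itemize}
\item Same source branch: the image branch is the same, and the distance is unchanged.
\item Different source branches: the source distance is $2t$. The image distance is either $2t$ or $2(t-s)_+\le 2t$.
\end{itemize}
The vertex is preserved. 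The same holds for the projection to $p'$. So $T^{\rhd}(r)$ dominates both generators. The union $\bigcup\Pyr(T_m^{\rhd}(p))$ is therefore a directed union of principal pyramids, and \Cref{thm:directed-union-closure} shows its Box closure is a pyramid.

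The only delicate point is the triangle inequality across branches, together with the vertex identification. The remaining steps reuse the argument of \Cref{thm:gaussian-horocone-pyramids} verbatim.
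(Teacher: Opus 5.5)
Your proposal is correct and follows essentially the same route as the paper: check that each finite star is a qm-space, show that two generators are dominated by a refinement star via the projections $(s,(i,j))\mapsto(s,i)$, and invoke \Cref{thm:directed-union-closure}. The differences are minor. You verify the triangle inequality by direct case analysis, whereas the paper writes $d_{\rhd}=d_{\mathrm{tree}}+t-s$ and lets the potential increment telescope. You refine the weights by the interval-tiling coupling, whereas the paper uses the product weights $p_iq_j$; both couplings give positive branch weights with the correct marginals, so either works.
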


\begin{proof}
Each finite star is a qm-space.  Indeed, let
\[
 d_{\mathrm{tree}}((s,i),(t,j))
 \coloneqq\begin{cases}|s-t|,&i=j,\\s+t,&i\ne j.
 \end{cases}
\]
Then
\begin{equation}\label{eq:star-tree-potential} d_{\rhd}((s,i),(t,j)) =d_{\mathrm{tree}}((s,i),(t,j))+t-s. \end{equation}
The branch coordinate is $1$-Lipschitz for $d_{\mathrm{tree}}$, and the max symmetrization is
\begin{equation}\label{eq:star-symmetrization}
 d_{\rhd}^{\mathrm s}((s,i),(t,j))
 =\begin{cases}
 2|s-t|,&i=j,\\
 2\max\{s,t\},&i\ne j.
 \end{cases}
\end{equation}
It is complete and separable after the common-vertex quotient, and the star measure has full support.

The generators are directed under domination.  Given stars with weights $(p_i)_{i=1}^m$ and $(q_j)_{j=1}^\ell$, form the star with branch labels $(i,j)$ and weights $p_iq_j$.  The maps
\[ (s,(i,j))\longmapsto(s,i), \qquad (s,(i,j))\longmapsto(s,j) \]
are measure preserving.  If a retained label agrees while the full labels differ, the source distance is $2t$ and the target distance is $2(t-s)_+\le2t$.  In all other cases the target distance is either equal to or below the source distance.  The product-refinement star therefore dominates both generators.  Applying \Cref{thm:directed-union-closure} to this directed family proves that \eqref{eq:critical-star-pyramid} is a pyramid.  This completes the proof.
\end{proof}

The lower inclusion follows from \Cref{thm:finite-star-lower} and the generating definition \eqref{eq:critical-star-pyramid}.

For the upper inclusion, take bounded measurements whose pushforward measures converge to $\mu$ and fix a compact radial interval $I\Subset(0,+\infty)$.  Their radial profiles form a compact family of coordinatewise nondecreasing $2$-Lipschitz maps on $I$.  Pass to a weak limit $\varpi$ of the angular profile measures.  The same-ray kernel gives the vertical inequality on $\supp\varpi$, while positive-measure spherical proximity and the cross-ray kernel give the cross-profile inequality.  Radial--angular independence identifies the evaluation pushforward over $\varpi\otimes(e^{-s}\,ds|_I)$ as a submeasure of $\mu$.  \Cref{thm:finite-profile-quantization} replaces this compact profile family by a finite critical-star measurement.  Its error is the sum of the profile quantization error and twice the exponential mass outside $I$, so it vanishes as the quantization is refined and $I$ exhausts $(0,+\infty)$.  This proves the upper inclusion.

Since \Cref{thm:finite-star-pyramid} shows that the target is a pyramid, the bounded-measurement convergence criterion identifies the comparison-space limit.  The factor $2\beta_n$ is bounded, so \Cref{thm:horospherical-transfer} transfers this limit to the Funk balls.  Thus the finite-star lower inclusion, finite profile quantization, and the pyramid property of the finite-star family prove the small-beta clause of \Cref{thm:main}.

\begin{remark}[Cancellation and criticality]\label{rem:cancellation-criticality}
For two angular blocks at scaled chord distance of constant order, the two radial factors in the Klein cosh expression contribute two copies of the term $\beta_n\log n$ after multiplication by $2\beta_n$, while $|\theta-\eta|$ contributes their negative sum.  The cancellation is exact in the height $Y_n=\sqrt{n(1-|\mathbf X_n|^2)}$, leaving only $-\beta_n\log V_n$ in the limiting radial mark.  In every finite dimension, \eqref{eq:dual-norm} is strictly below one and distinct points have positive Funk distance.  Its supremum over the ball equals one, however, and the limiting Gaussian endpoint, horocones, and critical stars can have distinct points at zero distance in one orientation.  The one-sided vanishing is therefore a limit manifestation of the globally critical exact potential, rather than a finite-dimensional degeneracy.
\end{remark}

\section{Identification and separation}\label{sec:identification-separation}

Variance identifies the directed Gaussian parameter, while the positive tail exponent identifies the horocone parameter.  The same invariants distinguish the directed star from both families.

\begin{definition}[Maximal bounded-measurement variance]\label{def:variance-invariant}
For an asymmetric pyramid $\mathcal P$, define
\[ \mathbf{V}^{\to}(\mathcal P) \coloneqq\sup_{R>0}\sup_{\nu\in\mathcal M(\mathcal P;1,R)}\Var(\nu). \]
\end{definition}
The symmetric theory extending classical invariants such as observable variance from pm-spaces to pyramids is developed in \cite{esaki-kazukawa-mitsuishi2024invariants}.  Here we use the directed/asymmetric counterpart.

\begin{theorem}[Identification of the directed Gaussian family]\label{thm:Gq-classification}
For every $q\in[0,1]$,
\begin{equation}\label{eq:Gq-variance} \mathbf{V}^{\to}(\mathcal G_q)=1+q^2. \end{equation}
Consequently, $\mathcal G_q=\mathcal G_{q'}$ if and only if $q=q'$.  If $\beta_n\to+\infty$, then
\[ \Pyr(\sqrt{2\beta_n}\,\BetaBall) \]
converges if and only if $q_n$ converges.
\end{theorem}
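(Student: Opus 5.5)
We first compute $\mathbf V^{\to}(\mathcal G_q)$ by proving matching upper and lower bounds, and then derive the two consequences.

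\emph{Lower bound.} Use the measurement $f(u,z)=(1+q)u$ on $G_q^0$, truncated to $[-R,R]$. It satisfies
\[
f(v)-f(u)=(1+q)(v-u)\le|u-v|+q(v-u)=D_q(u,v),
\]
and the same inequality survives monotone truncation. Under $\gamma_{1/2}$ its variance is $(1+q)^2/2$. This already settles the invariant when $q=0$ and $q=1$, but it does not produce $1+q^2$ in general. So we combine the radial coordinate with one base coordinate. Take $f(u,z)=au+bz_1$ on $G_q^1$. The condition $f\in\Lipplus$ holds exactly when $a\,\delta u+b\,\delta z\le\sqrt{\delta u^2+\delta z^2}+q\,\delta u$ for all increments, that is, when $(a-q)^2+b^2\le1$. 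The variance is $a^2/2+b^2$. Maximizing over the disk with $b^2=1-(a-q)^2$ gives
\[
\tfrac{a^2}{2}+1-(a-q)^2,
\]
whose derivative $a-2(a-q)$ vanishes at $a=2q$. The value there is $2q^2+1-q^2=1+q^2$. Since the boundary choice $a=1+q$ gives only $(1+q)^2/2\le1+q^2$, the interior point is the maximizer. Truncating at level $R$ and letting $R\to\infty$ shows the supremum is at least $1+q^2$. Truncation preserves $\Lipplus$ because post-composition with a nondecreasing $1$-Lipschitz map preserves it.

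\emph{Upper bound.} This is the main obstacle. Every $\nu\in\mathcal M(\mathcal G_q;1,R)$ is a Prokhorov limit of measurements of some $G_q^k$, because the union in \eqref{eq:Gq-pyramid} is dense and \eqref{eq:measurement-box-stability} holds. Bounded support makes the variance continuous under these limits, so it suffices to bound $\Var(f_*\gamma)$ for $f\in\Lipplus(G_q^k)$. By the same computation as above, such an $f$ satisfies, almost everywhere, the gradient condition $(\partial_uf-q)^2+|\nabla_zf|^2\le1$. Write $g=f-qu$. Then $g$ is $1$-Lipschitz for the Euclidean metric. Since $\gamma_{1/2}\otimes\gamma^k$ has covariance $\mathrm{diag}(1/2,I)$, the Gaussian Poincaré inequality applies. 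A covariance-type decomposition, $\Var(f)=\Var(g)+2q\operatorname{Cov}(g,u)+q^2/2$, is one route, but it needs care. A cleaner plan is to apply Poincaré directly to $f$:
\[
\Var(f)\le\mathbb E\bigl[\tfrac12(\partial_uf)^2+|\nabla_zf|^2\bigr].
\]
Pointwise, with $a=\partial_uf$, the integrand is $a^2/2+|\nabla_zf|^2\le a^2/2+1-(a-q)^2\le1+q^2$, by the same one-variable maximization. This matches the lower bound, giving \eqref{eq:Gq-variance}. A possible subtlety is that $f$ is only semi-Lipschitz. However, $f-qu$ is Lipschitz, so $f$ is Lipschitz and differentiable almost everywhere, and Poincaré applies.

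\emph{Consequences.} The map $q\mapsto1+q^2$ is injective on $[0,1]$, so $\mathcal G_q=\mathcal G_{q'}$ forces $q=q'$. For the convergence criterion, suppose first that $q_n$ converges. Then the large-beta clause of \Cref{thm:main} gives convergence. Conversely, suppose the pyramids converge but $q_n$ does not. By compactness of $[0,1]$ there are two subsequences with $q_n\to q\neq q'$. By \Cref{thm:main}, along these subsequences the pyramids converge to $\mathcal G_q$ and $\mathcal G_{q'}$ respectively. Limits of pyramids are unique, since sequential Painlevé--Kuratowski limits are unique, so $\mathcal G_q=\mathcal G_{q'}$. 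This contradicts the first consequence.
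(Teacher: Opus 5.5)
Your proposal is correct and follows essentially the paper's proof: the same Gaussian Poincar\'e bound closed by the same pointwise one-variable maximization (your $a=\partial_u f$ is the paper's $q+\partial_u g$), the same extremal observable $2qu+\sqrt{1-q^2}\,z_1$ with monotone clipping, and the same compactness-plus-injectivity argument for the convergence criterion. One harmless slip: the single-coordinate observable $(1+q)u$ has variance $(1+q)^2/2$, which equals $1+q^2$ only at $q=1$ and not at $q=0$, where it gives $1/2$; your general optimization over $au+bz_1$ makes that side remark unnecessary.
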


\begin{proof}
By the exact-potential identity, every one-sided observable on $G_q^k$ has the form
\[ f(u,z)=qu+g(u,z) \]
with $g$ Euclidean $1$-Lipschitz.  Gaussian Poincar\'e and Rademacher's theorem give
\[ \Var(f) \le\mathbb E\left[\frac12(q+\partial_ug)^2+|\nabla_zg|^2\right]. \]
At almost every point, $(\partial_ug)^2+|\nabla_zg|^2\le1$, and hence
\[ \frac12(q+\partial_ug)^2+|\nabla_zg|^2 \le1+q^2-\frac12(\partial_ug-q)^2 \le1+q^2. \]
Exact factors and bounded measurement limits preserve this bound.

For the reverse inequality, take
\[ g(u,z)=qu+\sqrt{1-q^2}\,z_1. \]
It is Euclidean $1$-Lipschitz, and the corresponding one-sided observable
\[ f(u,z)=2qu+\sqrt{1-q^2}\,z_1 \]
has variance $1+q^2$.  Monotone clipping preserves the one-sided Lipschitz property, and the clipped variances tend to this value.  This proves \eqref{eq:Gq-variance} and distinguishes the parameters.

If $q_n$ converges, the large-beta clause of \Cref{thm:main} gives convergence.  Conversely, the same clause shows that each cluster point $q$ of $(q_n)$ gives the subsequential limit $\mathcal G_q$.  Convergence of the full pyramid sequence makes all these limits equal, and \eqref{eq:Gq-variance} makes all cluster points equal.  Compactness of $[0,1]$ proves convergence of $q_n$.  This completes the proof.
\end{proof}

\begin{definition}[Positive tail exponent]\label{def:tail-exponent}
For a probability measure $\nu$ on $\R$, let $\operatorname{Med}(\nu)$ be its closed median interval.  For an asymmetric pyramid $\mathcal P$ and $\lambda>0$, put
\begin{align}
 \mathsf E_\lambda(\mathcal P)
 &\coloneqq\sup_{R>0}\sup_{\nu\in\mathcal M(\mathcal P;1,R)}
 \inf_{m\in\operatorname{Med}(\nu)}
 \int_\R e^{\lambda(t-m)_+}\,d\nu(t),
 \label{eq:tail-functional}\\
 \Lambda^+(\mathcal P)
 &\coloneqq\sup\{\lambda>0\colon\mathsf E_\lambda(\mathcal P)<+\infty\}.
 \label{eq:tail-exponent}
\end{align}
\end{definition}

\begin{theorem}[Identification of the fixed-beta family]\label{thm:Hbeta-classification}
For every $\beta>0$,
\begin{equation}\label{eq:Hbeta-tail-exponent} \Lambda^+(\mathcal H_\beta)=\beta. \end{equation}
Consequently, $\mathcal H_\beta=\mathcal H_{\beta'}$ if and only if $\beta=\beta'$.  If $(\beta_n)$ stays in a compact subinterval of $(0,+\infty)$, then $\Pyr(\BetaBall)$ converges if and only if $\beta_n$ converges.
\end{theorem}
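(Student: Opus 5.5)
The plan is to prove the two inequalities $\Lambda^+(\mathcal H_\beta)\ge\beta$ and $\Lambda^+(\mathcal H_\beta)\le\beta$. The identification and convergence statements then follow exactly as in \Cref{thm:Gq-classification}.

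\emph{Upper bound for the exponent.} I take the one-sided observable $f(z,y)=-\log y$ on $H_\beta^0=((0,+\infty),D^{\mathrm{hor}},\chi_{2\beta})$. For $z=z'$ the distance equals $|\log(y/y')|+\log(y/y')=2(\log y-\log y')_+$, which dominates $f(y')-f(y)=\log y-\log y'$. Indeed, in $D^{\mathrm{hor}}$ the term $\arcosh\ge|\log(y/y')|$ in general, so $f$ is in $\Lipplus$ on every $H_\beta^k$. The law of $f$ has lower tail $\mathbb P(-\log Y>t)=\mathbb P(Y<e^{-t})\asymp e^{-2\beta t}$. Clipping to $[-R,R]$ gives bounded measurements, monotonically clipped as in \Cref{thm:Gq-classification}. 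For these, $\int e^{\lambda(t-m)_+}d\nu$ tends to infinity as $R\to\infty$ whenever $\lambda\ge2\beta$.

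This yields only $\Lambda^+\le2\beta$, so the normalization has to be adjusted. The better choice is $f=-2\log y$, which is still one-sided $1$-Lipschitz. For $y\ge y'$ the increment $2\log(y/y')$ equals the distance at $z=z'$, and for $y<y'$ the increment is negative. With this choice the tail is $\mathbb P(f>t)\asymp e^{-\beta t}$, so $\mathsf E_\lambda=+\infty$ for every $\lambda>\beta$, and hence $\Lambda^+\le\beta$.

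\emph{Lower bound for the exponent.} I must show $\mathsf E_\lambda(\mathcal H_\beta)<\infty$ for every $\lambda<\beta$. By \eqref{eq:measurement-box-stability}, by monotonicity under domination, and by the closure in \eqref{eq:Hbeta-pyramid}, it suffices to bound the tail functional uniformly over one-sided observables $f$ on all generators $H_\beta^k$. The bound must be uniform in $k$, and it must be closed under Prokhorov limits; lower semicontinuity of $\nu\mapsto\inf_m\int e^{\lambda(t-m)_+}d\nu$ handles the latter, using a uniform tail bound together with convergence of medians.

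The key estimate is a concentration inequality for the upper tail. By \eqref{eq:affine-lip}-type reasoning, $D^{\mathrm{hor}}$ equals the hyperbolic distance on the upper half-space plus the increment $-\log y'+\log y$. Hence $f=-\log y+g$ with $g$ $1$-Lipschitz for the hyperbolic metric $\dK$. I will compare $f$ with the potential $-2\log y$ plus a bounded-tail error. Along the horosphere directions the hyperbolic metric is at most $|z-z'|/\min(y,y')$, while in the vertical direction the Lipschitz constraint gives $f(z,y)\le f(z,y_0)+2(\log y_0-\log y)_+$. I will fix a reference height $y_0$ in the bulk of $\chi_{2\beta}$ and use Gaussian concentration for $z\mapsto f(z,y_0)$. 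The Euclidean Lipschitz constant of this map is at most $1/y_0$, uniformly in $k$, so its tails are Gaussian around a median. Adding the vertical bound gives $(f-m)_+\le C+|G|/y_0+2(\log y_0-\log y)_+$. The last term has exponential moments of order $\lambda$ exactly when $2\lambda<2\beta$, and the first two terms contribute bounded moments. The median shift is controlled because the law of $f$ is dominated on a set of fixed positive mass.

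\emph{Consequences and main obstacle.} Distinct $\beta$ give distinct $\Lambda^+$, so $\mathcal H_\beta\ne\mathcal H_{\beta'}$. For $\beta_n$ confined to a compact interval, each cluster point $\beta$ yields the subsequential limit $\mathcal H_\beta$ by the fixed-beta clause of \Cref{thm:main}, and compactness finishes the argument exactly as in \Cref{thm:Gq-classification}. I expect the main obstacle to be the lower bound: obtaining a $k$-uniform tail estimate that is also stable under Box closure, and in particular controlling the median of $f$ independently of $f$ and $k$.
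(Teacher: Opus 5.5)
Your proposal is correct and follows essentially the paper's route. For $\Lambda^+\le\beta$ you use the same vertical observable $-2\log y$ with monotone clippings. For $\Lambda^+\ge\beta$ you use the same ingredients: a fixed-height slice ($y_0=1$ in the paper) that is Euclidean Lipschitz uniformly in $k$, Gaussian concentration for it, the vertical bound $f\le f(\cdot,y_0)+2(\log y_0-\log y)_+$, and independence to split the exponential moment, which is finite exactly when $\lambda<\beta$.

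The one step you leave vague is the median control, and as you phrase it it would not suffice. The paper handles it with the reverse vertical inequality $f(z,y)\ge f(z,y_0)-2(\log y-\log y_0)_+$, which you never state, combined with independence. This gives a set of mass strictly greater than $1/2$ on which $f$ is bounded below, and that is what forces every median above $\mathbb E h-2-C_\beta$. A set of merely "fixed positive mass" would not pin down the median.
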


\begin{proof}
It is enough to prove a uniform upper bound on every finite generator.  Let $f$ be a bounded one-sided observable on $H_\beta^k$, and let $Z\sim\gamma^k$ and $Y\sim\chi_{2\beta}$ be independent.  Put
\[ \xi(y)\coloneqq-2\log y, \qquad h(z)\coloneqq f(z,1). \]
At height one the directed metric is symmetric and equals $2\arsinh(|z-z'|/2)$, so $h$ is Euclidean $1$-Lipschitz.  The vertical directed and symmetrized inequalities give
\begin{equation}\label{eq:vertical-tail-control} f(z,y)\le h(z)+\xi(y)_+, \qquad |f(z,y)-h(z)|\le|\xi(y)|. \end{equation}
Gaussian Poincar\'e gives $\Var(h(Z))\le1$.  Therefore
$\mathbb P(h(Z)\ge\mathbb Eh(Z)-2)\ge3/4$.  Choose $C_\beta$ with $\mathbb P(|\xi(Y)|\le C_\beta)>3/4$.  Independence and the second inequality in \eqref{eq:vertical-tail-control} show that every median $m_f$ of $f(Z,Y)$ satisfies
\[ m_f\ge\mathbb Eh(Z)-2-C_\beta. \]
The first inequality then gives
\[ (f-m_f)_+ \le(h-\mathbb Eh)_++\xi(Y)_++2+C_\beta. \]
For every $\lambda>0$, the Gaussian exponential estimate gives
\[ \mathbb E e^{\lambda(h-\mathbb Eh)_+} \le1+e^{\lambda^2/2}. \]
Writing $Y^2=2V$ with $V\sim\Gamma(\beta,1)$, we have
\[ \mathbb E e^{\lambda\xi(Y)_+}<+\infty \quad\Longleftrightarrow\quad\lambda<\beta. \]
Indeed, near zero the Gamma density is a constant multiple of $v^{\beta-1}$ and $e^{\lambda\xi(\sqrt{2v})}=(2v)^{-\lambda}$.  This proves $\mathsf E_\lambda(\mathcal H_\beta)<+\infty$ for $\lambda<\beta$.

For the reverse inequality, $\xi(y)=-2\log y$ is one-sided $1$-Lipschitz for the vertical horocone distance.  Its monotone clippings are bounded one-sided observables, and their centered upper exponential moments diverge with the clipping level whenever $\lambda\ge\beta$.  This proves \eqref{eq:Hbeta-tail-exponent}.

The fixed-beta clause of \Cref{thm:main} shows that each cluster point $\beta$ of a bounded positive parameter sequence gives the subsequential Funk-ball limit $\mathcal H_\beta$.  If the full sequence converges, all such targets coincide.  Equation \eqref{eq:Hbeta-tail-exponent} makes all cluster points equal, and compactness of the parameter interval proves convergence of $\beta_n$.  The converse is again the fixed-beta clause of \Cref{thm:main}; its comparison-space inclusions are supplied by \Cref{thm:fixed-beta-convergence}.  This completes the proof.
\end{proof}

\begin{proposition}[Pairwise separation of the phase pyramids]
\label{prop:phase-pyramid-separation}
For every $q\in[0,1]$ and $\beta>0$, the pyramids
$\mathcal G_q$, $\mathcal H_\beta$, and $\mathcal S_{\rhd}$ are pairwise
distinct.  More precisely,
\begin{align}
 \Lambda^+(\mathcal G_q)&=+\infty,
 &\Lambda^+(\mathcal S_{\rhd})&=\frac12,
 \label{eq:phase-separation-tail}\\
 \mathbf{V}^{\to}(\mathcal S_{\rhd})&=4,
 &\mathbf{V}^{\to}(\mathcal H_{1/2})&\ge\frac{\pi^2}{2}.
 \label{eq:phase-separation-variance}
\end{align}
\end{proposition}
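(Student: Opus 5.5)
The plan is to evaluate the two invariants of \Cref{def:variance-invariant,def:tail-exponent} on the generators and pass to the Box closures. Separation then follows from the values. By \eqref{eq:phase-separation-tail} and \eqref{eq:Hbeta-tail-exponent}, $\Lambda^+(\mathcal G_q)=+\infty$ differs from the finite values $\Lambda^+(\mathcal H_\beta)=\beta$ and $\Lambda^+(\mathcal S_{\rhd})=1/2$. The tail exponent separates $\mathcal H_\beta$ from $\mathcal S_{\rhd}$ except when $\beta=1/2$. In that case \eqref{eq:phase-separation-variance} gives $\mathbf V^{\to}(\mathcal S_{\rhd})=4<\pi^2/2\le\mathbf V^{\to}(\mathcal H_{1/2})$.

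To pass from generators to closures, I will use \eqref{eq:measurement-box-stability} and \eqref{eq:measurement-convergence-criterion}. They show that $\mathcal M(\mathcal P;1,R)$ is the Prokhorov closure of the union of the generator measurement sets. Both upper bounds below will be proved as weakly closed conditions on $\nu$: a variance bound on $[-R,R]$, and a uniform tail inequality $\nu((m+r,\infty))\le\psi(r)$ holding for every median $m$. Such conditions persist under Prokhorov limits, up to the usual care with medians, which can be handled by slightly enlarging $r$. For the lower bounds I will exhibit explicit unbounded observables and clip them monotonically, as in the proofs of \Cref{thm:Gq-classification,thm:Hbeta-classification}.

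\emph{Gaussian tail.} By \eqref{eq:affine-lip}, each one-sided observable on $G_q^k$ has the form $f=qu+g$ with $g$ Euclidean $1$-Lipschitz. Hence $f$ is $2$-Lipschitz, and Gaussian concentration for $\gamma_{1/2}\otimes\gamma^k$ gives $\mathbb P(|f-\operatorname{med}f|>r)\le 2e^{-r^2/8}$ uniformly in $k$ and $f$. This tail bound is closed, so $\mathsf E_\lambda(\mathcal G_q)<\infty$ for every $\lambda$, and therefore $\Lambda^+(\mathcal G_q)=+\infty$.

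\emph{Star tail.} On $T_m^{\rhd}$, write $O$ for the common vertex. Since $d_{\rhd}((s,i),O)=0$, every one-sided observable satisfies $f\ge f(O)$. Since $d_{\rhd}(O,(s,i))=2s$, it also satisfies $f(s,i)\le f(O)+2s$. Thus every median $m$ is at least $f(O)$, and $(f-m)_+\le 2s$ with $s\sim\Exp(1)$. This gives the uniform bound $\mathsf E_\lambda\le 1/(1-2\lambda)$ for $\lambda<1/2$. In the other direction, $f(s,i)=2s$ is a one-sided observable by \eqref{eq:critical-star-distance}. Its clippings have centered exponential moments that diverge when $\lambda\ge 1/2$, so $\Lambda^+(\mathcal S_{\rhd})=1/2$.

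\emph{Horocone variance.} For $\beta=1/2$ we have $Y\sim\chi_1$, so $Y^2=2V$ with $V\sim\Gamma(1/2,1)$. The observable $\xi=-2\log y$, shown to be one-sided $1$-Lipschitz in the proof of \Cref{thm:Hbeta-classification}, has $\Var(\xi)=\Var(\log V)=\psi'(1/2)=\pi^2/2$. Clipping then gives $\mathbf V^{\to}(\mathcal H_{1/2})\ge\pi^2/2$.

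\emph{Star variance.} This is the main obstacle. The lower bound $4$ comes from clipping $f=2s$, since $\Var(2s)=4$. For the upper bound, the reduction above shows that each branch restriction has the form $f(s,i)=f(O)+\varphi_i(2s)$, where $\varphi_i$ is nondecreasing and $1$-Lipschitz with $\varphi_i(0)=0$. The cross-branch constraint is then automatic, since $\varphi_j(2t)-\varphi_i(2s)\le 2t$. A single branch is easy: $\tfrac12\mathbb E(\varphi(2S)-\varphi(2S'))^2\le\tfrac12\mathbb E(2S-2S')^2=4$. For the mixture over branches I plan a tail argument. For each branch, reaching level $b$ from level $a<b$ requires $s$ to grow by at least $(b-a)/2$. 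The memoryless property of $\Exp(1)$ therefore gives $G_i(b)\le e^{-(b-a)/2}G_i(a)$ for $G_i(b)\coloneqq\mathbb P(\varphi_i(2S)>b)$. This log-concavity-type bound survives mixing, so the tail $G$ of $W\coloneqq f-f(O)$ satisfies $G(b)\le e^{-(b-a)/2}G(a)$, with $G(0^-)=1$. I then write
\[ \Var(W)=2\iint_{a<b}(1-G(a))\,G(b)\,da\,db \]
and maximize the right-hand side over all such $G$. The expected extremizer is $G(b)=e^{-b/2}$, which is the law of $2S$ and gives the value $4$. Proving that this relative-tail constraint indeed caps the double integral at $4$ is the step I expect to need the most care. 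A first attempt would be a variational or rearrangement argument on $\log G$, which is constrained to have slope at most $-1/2$. The bound $\mathbf V^{\to}(\mathcal S_{\rhd})\le4$ would then pass to the closure by the variance closedness noted at the start.
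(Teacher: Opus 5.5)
\textbf{There is a genuine gap: the upper bound $\mathbf V^{\to}(\mathcal S_{\rhd})\le4$ is never proved.} Most of your plan matches the paper's proof otherwise. You use the same invariants and the same test observables ($2s$ on $T_1^{\rhd}(1)$, $-2\log y$ on $H^0_{1/2}$) with monotone clipping. You use the same bounds $f\ge f(O)$ and $f\le f(O)+2s$ for the star tail, and you pass to the Box closure through closed conditions on bounded measurements. In that passage only one median is needed, since $\mathsf E_\lambda$ takes an infimum over medians and limits of medians are medians. Two small differences are harmless. For the Gaussian tail, the observable identity you need on $G_q^k$ is \eqref{eq:Dq-observables}, not \eqref{eq:affine-lip}. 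You separate $\mathcal G_q$ from $\mathcal S_{\rhd}$ by the tail exponent rather than by variance; either works.

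You leave the star variance bound, which you correctly identify as the main obstacle, as an unsolved variational problem on $\log G$. Without it, neither $\mathbf V^{\to}(\mathcal S_{\rhd})=4$ nor the separation of $\mathcal H_{1/2}$ from $\mathcal S_{\rhd}$ is established.

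The missing idea is to avoid bounding $\Var(W)$ through $\tfrac12\mathbb E(W-W')^2$ or your double integral. Both are quadratic in the branch mixture, which is why the single-branch argument does not extend. Instead use the \emph{linear} inequality $\mathbb EW^2\le4\,\mathbb EW$. It holds branch by branch, so it survives averaging over branches, and then $4x-x^2\le4$ finishes. The paper obtains it by integration by parts. With $f_i(s)=f(s,i)-f(o)$ bounded, nondecreasing, $2$-Lipschitz and $f_i(0)=0$,
\[ \int_0^\infty f_i^2e^{-s}\,ds=\int_0^\infty2f_if_i'e^{-s}\,ds\le4\int_0^\infty f_ie^{-s}\,ds. \]
Averaging gives $\Var(f)=\Var(W)\le4\overline f-\overline f^{\,2}\le4$ with $\overline f=\mathbb EW$.

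Your own relative-tail constraint also closes the gap immediately. From $G(c)\le e^{-(c-b)/2}G(b)$ for $c>b\ge0$ you get $\int_b^\infty G\le2G(b)$. Fubini then gives
\[ \mathbb EW^2=2\int_0^\infty\!\!\int_b^\infty G(c)\,dc\,db\le4\int_0^\infty G(b)\,db=4\,\mathbb EW, \]
which yields the same conclusion without any rearrangement or variational argument.
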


\begin{proof}
Let $f$ be a bounded one-sided observable on $G_q^k$.  Applying the
one-sided inequality in both orders and using \eqref{eq:Dq} gives
\[ |f(u,z)-f(v,z')| \le\sqrt{(u-v)^2+|z-z'|^2}+q|u-v| \le2\sqrt{(u-v)^2+|z-z'|^2}. \]
After writing the radial Gaussian coordinate as $U=W/\sqrt2$, the observable
is therefore a $2$-Lipschitz function of a standard Gaussian vector.  The
Gaussian exponential estimate gives
\begin{equation}\label{eq:Gq-centered-exponential} \mathbb E\exp\bigl(\lambda(f-\mathbb Ef)\bigr) \le e^{2\lambda^2} \qquad(\lambda>0). \end{equation}
By \eqref{eq:Gq-variance}, $\Var(f)\le2$.  Every median $m_f$ of $f$ satisfies
$m_f\ge\mathbb Ef-2\sqrt2$ by Chebyshev's inequality.  It follows that
\[ \mathbb E e^{\lambda(f-m_f)_+} \le e^{2\sqrt2\lambda} \left(1+e^{2\lambda^2}\right). \]
The bound is independent of $k$, the output range, and the observable.
Exact factors and bounded-measurement limits preserve it, which proves the
first identity in \eqref{eq:phase-separation-tail}.

We next compute both invariants of the star pyramid.  Let $o$ be the common
vertex of $T_m^{\rhd}(p_1,\ldots,p_m)$ and put
\[ f_i(s)\coloneqq f(s,i)-f(o) \]
for a bounded one-sided observable $f$.  Formula
\eqref{eq:critical-star-distance} gives, for $0\le s\le t$,
\begin{equation}\label{eq:star-observable-profile} 0\le f_i(t)-f_i(s)\le2(t-s), \qquad f_i(0)=0. \end{equation}
In particular, $0\le f_i(s)\le2s$.  Since each profile is absolutely
continuous and bounded, integration by parts and
\eqref{eq:star-observable-profile} give
\[ \int_0^\infty f_i(s)^2e^{-s}\,ds =\int_0^\infty2f_i(s)f_i'(s)e^{-s}\,ds \le4\int_0^\infty f_i(s)e^{-s}\,ds. \]
Set
\[ \overline f\coloneqq\sum_{i=1}^mp_i\int_0^\infty f_i(s)e^{-s}\,ds. \]
Then $0\le\overline f\le2$, and averaging the preceding inequality over
the branches yields
\[ \Var(f)\le4\overline f-\overline f^{\,2}\le4. \]
The clipped observables $\min\{2s,L\}$ on $T_1^{\rhd}(1)$ converge in
$L^2$ to $2s$, whose variance under $e^{-s}\,ds$ is $4$.  Therefore
$\mathbf{V}^{\to}(\mathcal S_{\rhd})=4$.

For a median $m_f$ of the same star observable, the inequality
$f(s,i)\ge f(o)$ implies $m_f\ge f(o)$.  Thus, for $\lambda<1/2$,
\[ \mathbb E e^{\lambda(f-m_f)_+} \le\int_0^\infty e^{2\lambda s}e^{-s}\,ds =\frac1{1-2\lambda}. \]
For the reverse bound, take $f_L(s)\coloneqq\min\{2s,L\}$ on
$T_1^{\rhd}(1)$.  If $L>2\log2$, then its unique median is $2\log2$, and
\[ \int_{\log2}^{L/2} e^{\lambda(2s-2\log2)}e^{-s}\,ds \longrightarrow+\infty \]
as $L\to+\infty$ whenever $\lambda\ge1/2$.  This proves the second identity
in \eqref{eq:phase-separation-tail}.  The preceding uniform estimates pass
to the box closure.  Indeed, on each fixed output interval, variance is
continuous under weak convergence, and limits of convergent sequences of
medians are medians of the limiting measure.

It remains to separate the only case in which the tail exponents of a
horocone and the star agree.  On $H_{1/2}^0$, the vertical observable
$y\mapsto-2\log y$ is one-sided $1$-Lipschitz.  Its monotone clippings
converge in $L^2$ to the original observable.  If $Y\sim\chi_1$ and
$Y^2=2V$ with $V\sim\Gamma(1/2,1)$, then
\[ \Var(-2\log Y)=\Var(\log V) =\sum_{\ell=0}^\infty\frac1{(\ell+1/2)^2} =\frac{\pi^2}{2}. \]
Here the middle identity follows by differentiating the Gamma integral
twice and using the logarithmic second derivative of Euler's product for
$\Gamma$.  The last identity follows by separating the odd terms in the
Basel sum.
This proves the second inequality in \eqref{eq:phase-separation-variance}.

Now \eqref{eq:Gq-variance} and \eqref{eq:phase-separation-variance} give
$\mathcal G_q\ne\mathcal S_{\rhd}$.  Equations
\eqref{eq:Hbeta-tail-exponent} and \eqref{eq:phase-separation-tail} give
$\mathcal G_q\ne\mathcal H_\beta$ for every $\beta>0$, and they give
$\mathcal H_\beta\ne\mathcal S_{\rhd}$ when $\beta\ne1/2$.  At
$\beta=1/2$, \eqref{eq:phase-separation-variance} and $\pi^2/2>4$ give the
remaining distinction.  This completes the proof.
\end{proof}

\appendix

\section{Transfer and closure tools}\label{app:transfer-tools}

The general transfer results in \Cref{sec:general-transfer} use
\Cref{eq:pyramid-bounded-measurement,eq:measurement-convergence-criterion,eq:measurement-membership-criterion} and the
measurement-set compactness stated immediately after the first of these equations, together with the one-sided extension,
high-mass approximation, truncated-kernel transfer, and Box-closure tools collected here.  Each tool is formulated once and
is not restated for individual radial laws.

\begin{lemma}\label{lem:one-sided-extension}
Let $(X,d_X)$ be an asymmetric metric space, let $A\subset X$ be nonempty, and let $f\colon A\to[-R,R]^N$ satisfy
\begin{equation}\label{eq:additive-one-sided-condition} d_N^+(f(x),f(y))\le d_X(x,y)+\eps \qquad(x,y\in A). \end{equation}
There is a map $F\colon X\to[-R,R]^N$ satisfying \eqref{eq:bounded-measurement-condition} and
\begin{equation}\label{eq:additive-extension-error} \sup_{x\in A}\lVert F(x)-f(x)\rVert_\infty\le\eps. \end{equation}
Suppose also that $X$ carries a probability measure $\mu_X$, that $A$ is $\mu_X$-measurable, that $f$ is measurable on $(A,\mu_X|_A)$, and that a probability measure $\nu$ on $[-R,R]^N$ satisfies
\begin{equation}\label{eq:extension-submeasure} f_*(\mu_X|_A)\le\nu. \end{equation}
Then $F$ can be chosen so that
\begin{equation}\label{eq:extension-prokhorov-bound} d_{\mathrm P}(F_*\mu_X,\nu)\le\max\{\eps,1-\mu_X(A)\}. \end{equation}
In particular, this distance is at most $\eps$ when $\mu_X(A)\ge1-\eps$.
This is the prescribed-subset extension of \cite[Lemma~3.10]{one-sided-pyramids}.
The underlying infimum construction is the one-sided McShane--Whitney-type extension studied by Romaguera and Sanchis \cite{romaguera2000semi} in the quasi-metric setting.
\end{lemma}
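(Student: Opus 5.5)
The plan is to apply the one-sided McShane--Whitney construction coordinatewise, with the constant shifted by $\eps$, and then clip to the cube. Write $f=(f_1,\dots,f_N)$. Condition \eqref{eq:additive-one-sided-condition} says that for each $j$ and all $x,y\in A$,
\[ f_j(y)-f_j(x)\le d_X(x,y)+\eps. \]
For each $j$ and $y\in X$ we define
\[ \widetilde F_j(y)\coloneqq\inf_{x\in A}\bigl(f_j(x)+\eps+d_X(x,y)\bigr), \qquad F_j\coloneqq\max\{-R,\min\{R,\widetilde F_j\}\}. \]
The infimum is at least $-R+\eps$, because $f_j\ge-R$ and $d_X\ge0$. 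By the triangle inequality $d_X(x,y')\le d_X(x,y)+d_X(y,y')$, so $\widetilde F_j(y')-\widetilde F_j(y)\le d_X(y,y')$. Clipping by a nondecreasing $1$-Lipschitz map of $\R$ preserves this one-sided inequality, and taking the maximum over $j$ of the positive parts gives \eqref{eq:bounded-measurement-condition}.

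For \eqref{eq:additive-extension-error}, fix $y\in A$. Choosing $x=y$ gives $\widetilde F_j(y)\le f_j(y)+\eps$. The hypothesis $f_j(y)\le f_j(x)+d_X(x,y)+\eps$ gives $\widetilde F_j(y)\ge f_j(y)$. Thus $f_j(y)\le\widetilde F_j(y)\le f_j(y)+\eps$. Since $f_j(y)\in[-R,R]$ and clipping to $[-R,R]$ is $1$-Lipschitz and fixes $f_j(y)$, we get $|F_j(y)-f_j(y)|\le\eps$. Measurability of $F$ is not automatic from the infimum over a possibly uncountable $A$. I would argue that $\widetilde F_j$ is upper semicontinuous for the symmetrized topology, being an infimum of functions continuous in $y$. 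That requires $d_X(x,\cdot)$ to be continuous for $d_X^{\mathrm s}$, which holds because $|d_X(x,y)-d_X(x,y')|\le d_X^{\mathrm s}(y,y')$. Hence $F$ is Borel.

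For the measure statement, the first thing I would check is whether the hypothesis must be read so that \eqref{eq:extension-submeasure} compares $f_*(\mu_X|_A)$ with $\nu$ on the mass carried by $A$. The sets $A$ and $X\setminus A$ are handled separately. On $A$ the pointwise bound $\lVert F-f\rVert_\infty\le\eps$ couples $F_*(\mu_X|_A)$ with $f_*(\mu_X|_A)\le\nu$ within $\ell^\infty$-distance $\eps$. The complementary mass $1-\mu_X(A)$, under both $F_*\mu_X$ and $\nu-f_*(\mu_X|_A)$, is then matched arbitrarily, since it has the same total. For any closed $B$ this gives
\[ F_*\mu_X(B)\le\nu(B^{\eps})+1-\mu_X(A), \]
and symmetrically. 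This yields a Prokhorov bound of $\max\{\eps,1-\mu_X(A)\}$ only after a careful choice of convention.

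This is where I expect the main obstacle. A naive Strassen-type coupling argument gives $\eps+1-\mu_X(A)$, not the maximum. To get the maximum, I would use the freedom stated as \enquote{$F$ can be chosen}: off $A$, the value of $F$ can be altered only in ways that preserve the one-sided condition. My first attempt would be to observe that the Prokhorov distance is at most $\delta$ whenever the mismatched mass is at most $\delta$ and the matched pairs are within $\delta$, where $\delta=\max\{\eps,1-\mu_X(A)\}$. Both requirements are met here, since the coupled pairs on $A$ are within $\eps\le\delta$ and the unmatched mass is $1-\mu_X(A)\le\delta$. This gives \eqref{eq:extension-prokhorov-bound}, and with it the final clause when $\mu_X(A)\ge1-\eps$.
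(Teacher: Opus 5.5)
Your proposal is correct and is essentially the paper's proof. The paper uses the same coordinatewise infimum extension; yours is shifted by $\eps$, giving $f_j\le\widetilde F_j\le f_j+\eps$ on $A$ instead of $f_j-\eps\le F_j\le f_j$. Both arguments then clip to the cube, obtain Borel measurability from $d_X^{\mathrm s}$-continuity, and build the coupling from $(F,f)_*(\mu_X|_A)$ together with an arbitrary coupling of the equal-mass remainders. The paper gets measurability more directly by applying the one-sided inequality in both orders, which makes $F$ $1$-Lipschitz for $d_X^{\mathrm s}$.

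Your worry about a bound of $\eps+1-\mu_X(A)$ is unfounded. The coupling characterization gives $\max\{\eps,1-\mu_X(A)\}$ directly, as your displayed inequality $F_*\mu_X(B)\le\nu(B^\eps)+1-\mu_X(A)$ and your final sentence already show. So no modification of $F$ off $A$ is needed.
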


\begin{proof}
Write $f=(f_1,\ldots,f_N)$.  For $1\le j\le N$ and $z\in X$, define
\[ F_j(z)\coloneqq\max\left\{-R,\min\left\{R,\inf_{x\in A}\{f_j(x)+d_X(x,z)\}\right\}\right\}. \]
The triangle inequality gives $F_j(z')-F_j(z)\le d_X(z,z')$.  If $y\in A$, choosing $x=y$ in the infimum gives $F_j(y)\le f_j(y)$, while \eqref{eq:additive-one-sided-condition} gives $F_j(y)\ge f_j(y)-\eps$.  Thus $F=(F_1,\ldots,F_N)$ satisfies \eqref{eq:bounded-measurement-condition} and \eqref{eq:additive-extension-error}.

Under the measure hypotheses, applying the exact inequality in both orders shows that $F$ is continuous with respect to $d_X^{\mathrm s}$ and hence Borel.  Couple $F_*(\mu_X|_A)$ and $f_*(\mu_X|_A)$ by $(F,f)_*(\mu_X|_A)$.  The positive measure $\nu-f_*(\mu_X|_A)$ and $F_*(\mu_X|_{X\setminus A})$ have the same mass and may be coupled arbitrarily.  Adding these two couplings gives a coupling of $F_*\mu_X$ and $\nu$ whose paired points are within $\eps$ on mass at least $\mu_X(A)$.  The coupling characterization of the Prokhorov distance therefore gives
\[ d_{\mathrm P}(F_*\mu_X,\nu)\le\max\{\eps,1-\mu_X(A)\}. \]
This completes the proof.
\end{proof}

\begin{lemma}\label{lem:high-mass-domination}
Let $(X_n,d_n,\mu_n)$ be qm-spaces, denoted simply by $X_n$, and let $(Y,d_Y,\nu)$, denoted simply by $Y$, be a qm-space.  Suppose that there are Borel sets $A_n\subset X_n$, Borel maps $p_n\colon A_n\to Y$, and positive numbers $\eps_n\to0$ such that
\begin{align}
\mu_n(A_n)&\longrightarrow1, \label{eq:approx-domination-mass}\\
(p_n)_*(\mu_n|_{A_n})&\longrightarrow\nu \quad\text{weakly}, \label{eq:approx-domination-measure}\\
d_Y(p_n(x),p_n(y))&\le d_n(x,y)+\eps_n \qquad(x,y\in A_n). \label{eq:approx-domination-distance}
\end{align}
Then $Y$ belongs to every subsequential pyramid limit of $(X_n)$.
This is the qm-space form of \cite[Proposition~3.11]{one-sided-pyramids}.
\end{lemma}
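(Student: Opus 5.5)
The plan is to show that for every $N\ge1$ and $R>0$, every cluster point of $\mathcal M(X_{n(k)};N,R)$ contains $\mathcal M(Y;N,R)$, up to errors that vanish. Then the inner condition \eqref{eq:weak-pyramid-inner} holds for $Y$ in any subsequential limit $\mathcal P$. From this, \eqref{eq:coordinate-model-membership} and \eqref{eq:measurement-membership-criterion} give $Y\in\mathcal P$.

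Fix a subsequence with $\Pyr(X_{n(k)})\to\mathcal P$, and fix $N$, $R$, and a measurement $G_*\nu\in\mathcal M(Y;N,R)$ with $G\colon Y\to[-R,R]^N$ satisfying \eqref{eq:bounded-measurement-condition}. First I pull back along the approximate domination. On $A_n$ the map $f_n\coloneqq G\circ p_n$ satisfies $d_N^+(f_n(x),f_n(y))\le d_Y(p_n(x),p_n(y))\le d_n(x,y)+\eps_n$. Lemma~\ref{lem:one-sided-extension} then extends it to an exact measurement $F_n\colon X_n\to[-R,R]^N$ with $\lVert F_n-f_n\rVert_\infty\le\eps_n$ on $A_n$.

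Next I estimate the Prokhorov distance from $(F_n)_*\mu_n$ to $G_*\nu$. Couple $F_n$ with $f_n$ on $A_n$; paired points are within $\eps_n$, and the omitted mass is $1-\mu_n(A_n)\to0$. It remains to see that $(f_n)_*(\mu_n|_{A_n})=G_*\bigl((p_n)_*(\mu_n|_{A_n})\bigr)$ converges to $G_*\nu$. This is the one delicate point. $G$ is continuous for $d_Y^{\mathrm s}$, because applying \eqref{eq:bounded-measurement-condition} in both orders bounds $\lVert G(y)-G(y')\rVert_\infty$ by $d_Y^{\mathrm s}(y,y')$. The continuous mapping theorem then applies to \eqref{eq:approx-domination-measure}, provided that weak convergence is taken in the $d_Y^{\mathrm s}$ topology; I will note that the submeasures have total mass tending to one. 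Hence $d_{\mathrm P}((F_n)_*\mu_n,G_*\nu)\to0$.

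Since $(F_n)_*\mu_n\in\mathcal M(X_n;N,R)=\mathcal M(\Pyr(X_n);N,R)$, the criterion \eqref{eq:measurement-convergence-criterion} along the subsequence gives $G_*\nu\in\mathcal M(\mathcal P;N,R)$. The latter set is closed by the compactness recorded after \eqref{eq:pyramid-bounded-measurement}. As $G$, $N$, and $R$ are arbitrary, $\mathcal M(Y;N,R)\subset\mathcal M(\mathcal P;N,R)$, and \eqref{eq:measurement-membership-criterion} yields $Y\in\mathcal P$. I expect the only real obstacle to be the measure-convergence step: checking that \eqref{eq:approx-domination-measure} is meant for the symmetrized topology, so that composing with $G$ preserves convergence. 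The rest is a direct use of the extension lemma and the measurement criteria.
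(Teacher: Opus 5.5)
Your proposal is correct and follows the paper's proof essentially step for step. Both arguments pull the measurement back along $p_n$, extend $G\circ p_n$ to an exact measurement $F_n$ by Lemma~\ref{lem:one-sided-extension}, show $d_{\mathrm P}((F_n)_*\mu_n,G_*\nu)\to0$, and conclude with \eqref{eq:measurement-convergence-criterion} and \eqref{eq:measurement-membership-criterion}. The differences are minor: the paper pads the restricted pushforward with $(1-\mu_n(A_n))\delta_0$ so that the measured clause \eqref{eq:extension-prokhorov-bound} applies directly, whereas you couple by hand, and you check explicitly that $G$ is $d_Y^{\mathrm s}$-continuous, which the paper leaves implicit. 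Your opening appeal to the inner condition \eqref{eq:weak-pyramid-inner} is a misnomer, since your actual argument runs entirely through the measurement criteria.
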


\begin{proof}
Pass to a subsequence for which $\Pyr(X_n)$ converges weakly to a pyramid $\mathcal P$.  Fix $N\ge1$, $R>0$, and $h_*\nu\in\mathcal M(Y;N,R)$, where $h\colon Y\to[-R,R]^N$ satisfies \eqref{eq:bounded-measurement-condition} with $d_Y$ in place of $d_X$.  On $A_n$, the map $h\circ p_n$ satisfies
\[ d_N^+(h(p_n(x)),h(p_n(y)))\le d_n(x,y)+\eps_n. \]
Apply \Cref{lem:one-sided-extension} with the probability measure
\[ (h\circ p_n)_*(\mu_n|_{A_n})+(1-\mu_n(A_n))\delta_0. \]
It gives an exact measurement $F_n\colon X_n\to[-R,R]^N$ such that
\[ d_{\mathrm P}\left((F_n)_*\mu_n,(h\circ p_n)_*(\mu_n|_{A_n})+(1-\mu_n(A_n))\delta_0\right) \le\max\{\eps_n,1-\mu_n(A_n)\}\longrightarrow0. \]
The probability measures in the second argument converge weakly to $h_*\nu$ by \eqref{eq:approx-domination-mass} and \eqref{eq:approx-domination-measure}.  Hence $(F_n)_*\mu_n\to h_*\nu$ in $d_{\mathrm P}$.  Since $(F_n)_*\mu_n\in\mathcal M(\Pyr(X_n);N,R)$, \eqref{eq:measurement-convergence-criterion} gives $h_*\nu\in\mathcal M(\mathcal P;N,R)$.  This holds for every $N$, $R$, and bounded measurement of $Y$, so \eqref{eq:measurement-membership-criterion} gives $Y\in\mathcal P$.  The subsequence was arbitrary.
This completes the proof.
\end{proof}

\begin{lemma}\label{lem:bounded-kernel-transfer}
Let $d_n$ and $d_n'$ be asymmetric metrics on the same probability space $(X_n,\mu_n)$.  Suppose that, for every $R>0$, there are Borel sets $A_{n,R}\subset X_n$ and positive numbers $\eps_{n,R}\to0$ such that
\begin{align}
\mu_n(A_{n,R})&\longrightarrow1, \label{eq:bounded-kernel-mass}\\
 \sup_{x,y\in A_{n,R}}
 \left|\min\{d_n(x,y),2R\}-\min\{d_n'(x,y),2R\}\right|
 &\le\eps_{n,R}.
 \label{eq:bounded-kernel-comparison}
\end{align}
Then the two sequences have the same weak limit points of all bounded one-sided measurement sets and hence the same subsequential pyramid limits.
\end{lemma}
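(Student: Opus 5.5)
The plan is to compare bounded measurements of the two spaces directly. We then conclude through the measurement convergence criterion \eqref{eq:measurement-convergence-criterion}. The key observation is that a measurement map into $[-R,R]^N$ only sees the distance up to the diameter $2R$ of the cube. If $F\colon X_n\to[-R,R]^N$ satisfies $d_N^+(F(x),F(y))\le d_n(x,y)$, then $d_N^+(F(x),F(y))\le2R$ as well. Hence
\[
 d_N^+(F(x),F(y))\le\min\{d_n(x,y),2R\}.
\]

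Step one: fix $N$, $R$ and a measurement $F_*\mu_n\in\mathcal M((X_n,d_n);N,R)$. Restrict $F$ to $A_{n,R}$. By the truncation observation and \eqref{eq:bounded-kernel-comparison}, for $x,y\in A_{n,R}$,
\[
 d_N^+(F(x),F(y))\le\min\{d_n'(x,y),2R\}+\eps_{n,R}\le d_n'(x,y)+\eps_{n,R}.
\]
Step two: apply \Cref{lem:one-sided-extension} on $(X_n,d_n')$. Take $A=A_{n,R}$, $f=F|_{A}$, additive error $\eps_{n,R}$, and reference measure $\nu=F_*\mu_n$. The submeasure condition \eqref{eq:extension-submeasure} is immediate, since $f_*(\mu_n|_A)\le F_*\mu_n$. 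The lemma yields an exact $d_n'$-measurement $F'$ with
\[
 d_{\mathrm P}(F'_*\mu_n,F_*\mu_n)\le\max\{\eps_{n,R},1-\mu_n(A_{n,R})\}\eqqcolon\kappa_{n,R}.
\]
By \eqref{eq:bounded-kernel-mass}, $\kappa_{n,R}\to0$. The bound is uniform over all measurements $F$, so
\[
 (d_{\mathrm P})_H\bigl(\mathcal M((X_n,d_n);N,R),\mathcal M((X_n,d_n');N,R)\bigr)\le\kappa_{n,R}
\]
by symmetry of the hypotheses. Measurability of $A_{n,R}$ and $F$ is as required, since $A_{n,R}$ is Borel.

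Step three converts this into the stated conclusion. Along any subsequence, the two sequences of compact measurement sets have vanishing Hausdorff distance. Therefore they have the same weak (Painlevé--Kuratowski) limit points. Now suppose $\Pyr(X_n,d_n)\to\mathcal P$ along a subsequence. Since $\mathcal M(\Pyr(X);N,R)=\mathcal M(X;N,R)$ by monotonicity \eqref{eq:measurement-box-stability}, we get
\[
 (d_{\mathrm P})_H\bigl(\mathcal M((X_n,d_n');N,R),\mathcal M(\mathcal P;N,R)\bigr)\to0
\]
for every $N,R$. Criterion \eqref{eq:measurement-convergence-criterion} then gives $\Pyr(X_n,d_n')\to\mathcal P$ along the same subsequence. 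The reverse direction is symmetric. Alternatively, one can pass to a further subsequence converging by compactness of pyramid space and identify the limits via the same criterion.

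The main obstacle is mild: ensuring that the truncation at $2R$ is harmless. This needs $d_N^+\le2R$ on $[-R,R]^N$, and that the extension lemma applies on an asymmetric metric where $d_n'$ and $d_n$ may be unbounded. It also needs the extended map $F'$ to be Borel for $(d_n')^{\mathrm s}$, which the lemma supplies. A further point is that the qm-space requirements (completeness and full support) hold for both metrics, as implicitly assumed in the hypothesis. I would simply note that these are part of the standing assumption that both $(X_n,d_n,\mu_n)$ and $(X_n,d_n',\mu_n)$ are qm-spaces.
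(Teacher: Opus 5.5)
Your proposal is correct and follows essentially the same route as the paper. You truncate at the cube diameter $2R$, transfer each $d_n$-measurement on $A_{n,R}$ to an exact $d_n'$-measurement with Prokhorov error at most $\max\{\eps_{n,R},1-\mu_n(A_{n,R})\}$ via the one-sided extension lemma, interchange the two metrics, and conclude with the bounded-measurement convergence criterion; your explicit Hausdorff bound and the identity $\mathcal M(\Pyr(X);N,R)=\mathcal M(X;N,R)$ only spell out steps the paper leaves implicit.
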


\begin{proof}
Let $F\colon(X_n,d_n)\to[-R,R]^N$ satisfy \eqref{eq:bounded-measurement-condition}.  For $x,y\in A_{n,R}$, the range bound and \eqref{eq:bounded-kernel-comparison} give
\[ d_N^+(F(x),F(y)) \le\min\{d_n(x,y),2R\} \le d_n'(x,y)+\eps_{n,R}. \]
The extension in \Cref{lem:one-sided-extension} produces an exact $d_n'$-measurement which differs from $F$ by at most $\eps_{n,R}$ on $A_{n,R}$.  The Prokhorov distance of their pushforward measures tends to zero.  Interchanging $d_n$ and $d_n'$ proves equality of all bounded-measurement limit sets.
By \eqref{eq:measurement-convergence-criterion}, equality of these limit sets is equivalent to equality of the subsequential pyramid limits.
	This completes the proof.
\end{proof}

\begin{lemma}\label{lem:common-ambient-box}
Let $(K,d_K)$ be a compact metric space and let probability measures $\nu_n\longrightarrow\nu$ weakly on $K$.  Then
\[ (\supp\nu_n,d_K,\nu_n) \longrightarrow(\supp\nu,d_K,\nu) \]
in box distance.  If the spaces on the left are exact factors of qm-spaces $X_n$ and $\Pyr(X_n)\to\mathcal P$, then $(\supp\nu,d_K,\nu)$ belongs to $\mathcal P$.
\end{lemma}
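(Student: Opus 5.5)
The plan is to reduce box convergence to Prokhorov convergence on the common ambient space $K$ through a single coupling, and then to obtain the membership statement from the sequential closedness in the definition of weak convergence of pyramids.

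\emph{Setup.} Each triple $(\supp\nu_n,d_K,\nu_n)$ is a qm-space, and so is $(\supp\nu,d_K,\nu)$. Indeed, $d_K$ is symmetric, so it coincides with its max symmetrization. A support is closed in the compact space $K$, hence complete and separable. A measure has full support on its own support.

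\emph{Coupling.} Since $K$ is a compact (in particular separable) metric space, weak convergence $\nu_n\to\nu$ is equivalent to $d_{\mathrm P}(\nu_n,\nu)\to0$. Choose $\eps_n\downarrow0$ with $\eps_n>d_{\mathrm P}(\nu_n,\nu)$. By Strassen's coupling characterization of the Prokhorov distance, there is $\pi_n\in\mathcal T(\nu_n,\nu)$ with
\[
 \pi_n\{(x,y)\colon d_K(x,y)>\eps_n\}\le\eps_n .
\]
Every such coupling is concentrated on $\supp\nu_n\times\supp\nu$, so we may regard it as a coupling of the two qm-spaces. Put
\[
 S_n\coloneqq\{(x,y)\in\supp\nu_n\times\supp\nu\colon d_K(x,y)\le\eps_n\}.
\]
This set is closed, and $1-\pi_n(S_n)\le\eps_n$.

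\emph{Hausdorff estimate.} Because $d_K$ is symmetric, $\Lipplus$ on either space is just the class of ordinary $1$-Lipschitz functions.
\begin{enumerate}[label=\textup{(\alph*)}]
\item Take $f$ that is $1$-Lipschitz on $\supp\nu_n$. Its McShane extension $\tilde f(z)=\inf_{x\in\supp\nu_n}\{f(x)+d_K(x,z)\}$ is $1$-Lipschitz on $K$; this is \Cref{lem:one-sided-extension} with $\eps=0$, omitting the clipping. Set $g\coloneqq\tilde f|_{\supp\nu}$.
\item For $(x,y)\in S_n$ we have $\tilde f(x)=f(x)$, so $|g(y)-f(x)|=|\tilde f(y)-\tilde f(x)|\le d_K(x,y)\le\eps_n$.
\item The symmetric argument, extending functions from $\supp\nu$, gives the other half of the Hausdorff distance.
\end{enumerate}
Hence $(d_\infty^{S_n})_H\le\eps_n$. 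The definition \eqref{eq:qm-box-distance} then gives
\[
 \Box\bigl((\supp\nu_n,d_K,\nu_n),(\supp\nu,d_K,\nu)\bigr)\le\max\{\eps_n,2\eps_n\}\to0 .
\]

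\emph{Membership.} Write $A_n\coloneqq(\supp\nu_n,d_K,\nu_n)$ and $A\coloneqq(\supp\nu,d_K,\nu)$. Since $A_n$ is an exact factor of $X_n$, we have $A_n\in\Pyr(X_n)$, and $A_n\to A$ in box distance by the first part. Suppose $A\notin\mathcal P$. Then \eqref{eq:weak-pyramid-outer} gives $\liminf_n\Box(A,\Pyr(X_n))>0$. This contradicts
\[
 \Box(A,\Pyr(X_n))\le\Box(A,A_n)\to0 .
\]
Equivalently, one can use the sequential reformulation stated right after \eqref{eq:weak-pyramid-outer}. So $A\in\mathcal P$.

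The only delicate point is the coupling step: one has to check that the Strassen coupling may be taken on the supports and that the set $S_n$ is closed. Both follow from the compactness of $K$, which is why no further estimate is needed.
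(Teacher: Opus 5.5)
Your proof is correct and follows the paper's argument. A Prokhorov (Strassen) coupling gives a closed near-diagonal set of almost full mass, which bounds the box distance, and the outer condition \eqref{eq:weak-pyramid-outer} then forces membership. The one difference is cosmetic: you check the box bound directly from the observable form \eqref{eq:qm-box-distance} via McShane extensions, whereas the paper simply invokes the $2\eps$ distortion bound for the near-diagonal relation.
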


\begin{proof}
This is the varying-support form of the common-ambient Prokhorov-to-box argument: the measures share a compact ambient metric space rather than a common full-support underlying space, and the final assertion passes the resulting exact factors to the pyramid limit under sequential Painlev\'e--Kuratowski convergence, which is also called weak Hausdorff convergence in mm-space theory.

Weak convergence on a compact metric space admits couplings for which the paired points are within a number tending to zero outside a set of mass tending to zero.  A relation formed by pairs at distance at most $\eps$ has distance distortion at most $2\eps$.  This proves the box convergence.

For the second assertion, every space on the left belongs to $\Pyr(X_n)$.  If the limiting support space did not belong to $\mathcal P$, \eqref{eq:weak-pyramid-outer} would give a positive lower bound for its Box distance from $\Pyr(X_n)$.  The Box approximations just obtained contradict such a bound.
	This completes the proof.
\end{proof}

\begin{theorem}\label{thm:directed-union-closure}
Let $\mathscr A$ be a nonempty family of qm-spaces directed under exact domination.  Then
\begin{equation}\label{eq:directed-union-closure} \overline{\bigcup_{X\in\mathscr A}\Pyr(X)}^{\,\Box} \end{equation}
is a pyramid.
\end{theorem}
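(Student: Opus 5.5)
Write $\mathcal Q$ for the closure in \eqref{eq:directed-union-closure} and $\mathcal U\coloneqq\bigcup_{X\in\mathscr A}\Pyr(X)$ for the union before closure. The plan is to check the pyramid axioms for $\mathcal Q$ one at a time: nonemptiness, Box closedness, downward closedness, and directedness. Nonemptiness is immediate because $\mathscr A\ne\emptyset$ and $X\in\Pyr(X)$. Box closedness holds by construction.

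First we treat the union $\mathcal U$ itself. It is downward closed, since each $\Pyr(X)$ is. It is directed: if $Y_1\preceq X_1$ and $Y_2\preceq X_2$ with $X_i\in\mathscr A$, choose $X\in\mathscr A$ dominating both. Transitivity of $\preceq$ (composition of exact domination maps preserves measures and does not increase directed distance) gives $Y_1,Y_2\in\Pyr(X)\subset\mathcal U$, so $X$ is a common dominator inside $\mathcal U$.

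Next we pass both properties to the closure using bounded measurements rather than explicit maps. For $A\in\mathcal Q$, pick $A_k\in\mathcal U$ with $\Box(A_k,A)\to0$. By \eqref{eq:measurement-box-stability}, every element of $\mathcal M(A;N,R)$ is a $d_{\mathrm P}$-limit of elements of $\mathcal M(A_k;N,R)\subset\mathcal M(\mathcal U;N,R)$. Conversely, each $\nu\in\mathcal M(\mathcal U;N,R)$ gives $\mathsf B_N^+(\nu)\in\mathcal U$ by downward closedness and \eqref{eq:coordinate-model-membership}. By \eqref{eq:coordinate-model-box}, $d_{\mathrm P}$-limits of such $\nu$ give coordinate models in $\mathcal Q$. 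Hence $\mathcal M(\mathcal Q;N,R)=\overline{\mathcal M(\mathcal U;N,R)}$, and $\nu\in\mathcal M(\mathcal Q;N,R)$ if and only if $\mathsf B_N^+(\nu)\in\mathcal Q$.

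Downward closedness: let $B\preceq A\in\mathcal Q$. Choose $A_k\in\mathcal U$ with $\Box(A_k,A)\to0$. Every measurement of $B$ lies in $\mathcal M(A;N,R)$ by \eqref{eq:measurement-box-stability}, and hence in the closure of $\mathcal M(\mathcal U;N,R)$. Representing it as a limit of coordinate models in $\mathcal U$ and using closedness of $\mathcal Q$ shows that each coordinate model of $B$ lies in $\mathcal Q$.

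Directedness: for $A,A'\in\mathcal Q$, take approximants $A_k,A'_k\in\mathcal U$ and common dominators $C_k\in\mathcal U$. The main obstacle is producing a single common dominator in $\mathcal Q$, because the $C_k$ need not converge. I would first try to use the fact that $\mathscr A$ is directed, so the measurement sets $\mathcal M(\mathcal U;N,R)$ form a single compact set for each $(N,R)$. Then $\mathcal Q$ is the weak limit of the increasing net $\Pyr(X)$, $X\in\mathscr A$, and the limit-pyramid machinery behind \eqref{eq:measurement-convergence-criterion} and the compactness statement (every weak limit of pyramids is a pyramid) yields directedness. Concretely, I would choose a sequence $X_j\in\mathscr A$, increasing under $\preceq$, such that $\mathcal M(X_j;N,R)$ becomes $2^{-j}$-dense in $\mathcal M(\mathcal U;N,R)$ for all $N,R\le j$. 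This is possible by compactness and directedness. Then $\Pyr(X_j)\to\mathcal Q$ by \eqref{eq:measurement-convergence-criterion}, and the cited fact that weak limits of pyramids are pyramids \cite[Theorem~5.1]{gds3-ja} gives the common dominator, and with it the downward closedness already checked.

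Finally, for the membership criterion \eqref{eq:measurement-membership-criterion} applied to $B$, I would verify it directly via the chain $\mathcal M(B;N,R)\subset\mathcal M(\mathcal Q;N,R)$, which uses that pyramid limit.
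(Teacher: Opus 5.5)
Your route differs from the paper's and can be made to work, but one step is circular as written. You obtain $\Pyr(X_j)\to\mathcal Q$ from \eqref{eq:measurement-convergence-criterion}, and you finish downward closedness with \eqref{eq:measurement-membership-criterion} applied to $\mathcal Q$. Both facts are stated for pyramids, so using them with target $\mathcal Q$ assumes what is to be proved. On its own, your downward-closedness paragraph only places the coordinate models $\mathsf B_N^+(\nu)$ of $B$ in $\mathcal Q$, not $B$ itself.

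The repair uses your exhausting sequence and only genuine pyramids. By compactness, pass to a subsequence along which $\Pyr(X_j)$ converges weakly to some pyramid $\mathcal P$.
\begin{itemize}
\item The inner condition \eqref{eq:weak-pyramid-inner} makes each member of $\mathcal P$ a Box limit of members of $\Pyr(X_j)\subset\mathcal U$, so $\mathcal P\subset\mathcal Q$.
\item Conversely, \eqref{eq:measurement-convergence-criterion} for the pyramids $\Pyr(X_j)\to\mathcal P$, your $2^{-j}$-density, and closedness of $\mathcal M(\mathcal P;N,R)$ give $\mathcal M(\mathcal U;N,R)\subset\mathcal M(\mathcal P;N,R)$ for integer $R$.
\item Coordinatewise clipping is monotone and $1$-Lipschitz, so it preserves one-sided measurements; this extends the inclusion to all $R>0$.
\item The membership criterion for the pyramid $\mathcal P$ then gives $\mathcal U\subset\mathcal P$, hence $\mathcal Q=\mathcal P$.
\end{itemize}
The same argument applies to every convergent subsequence, so the whole increasing sequence converges to $\mathcal Q$. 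After this, your separate downward-closedness paragraph and the identity $\mathcal M(\mathcal Q;N,R)=\overline{\mathcal M(\mathcal U;N,R)}$ are no longer needed.

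The paper never uses bounded measurements here.
\begin{itemize}
\item For downward closedness of the closure, it takes $X_n\in\mathcal U$ with $X_n\to X$ and chooses generators in $\mathscr A$ above each $X_n$. It then passes to a subsequential weak limit $\mathcal P$ of their principal pyramids.
\item The outer condition \eqref{eq:weak-pyramid-outer} puts $X$ in $\mathcal P$, and downward closedness of $\mathcal P$ puts every $Y\preceq X$ there. The inner condition then returns $Y$ to the closure of $\mathcal U$.
\item Directedness is handled the same way, with one generator above both $X_n$ and $Y_n$.
\end{itemize}
The obstacle you flagged, that the common dominators $C_k$ need not converge, disappears if you take a limit of the pyramids $\Pyr(C_k)$ rather than of the spaces $C_k$. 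That limit contains $X$ and $Y$, hence a common dominator, and it lies inside $\mathcal Q$ because $\Pyr(C_k)\subset\mathcal U$.

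This pairwise argument needs only compactness and the two Painlev\'e--Kuratowski conditions. Your exhaustion needs more machinery: compactness of measurement sets, both measurement criteria, and a countable reduction in $(N,R)$. In return it proves more, namely that $\mathcal Q$ is itself the weak limit of a single $\preceq$-increasing sequence of principal pyramids $\Pyr(X_j)$ with $X_j\in\mathscr A$.
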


\begin{proof}
Let $\mathcal U$ denote the union in \eqref{eq:directed-union-closure}.  It is nonempty, downward closed, and directed.

Suppose $X_n\in\mathcal U$ and $X_n\to X$ in Box distance.  Choose a generator in $\mathscr A$ above each $X_n$ and consider its associated pyramid.  The subsequential compactness statement following \eqref{eq:weak-pyramid-outer}, attributed to \cite[Corollary~5.3]{gds3-ja}, gives a subsequence converging weakly to a pyramid.  The outer condition \eqref{eq:weak-pyramid-outer} places $X$ in this limit.  If $Y\preceq X$, downward closedness of the limiting pyramid gives $Y$ in the same limit, and the inner condition \eqref{eq:weak-pyramid-inner} provides $Y_n\in\mathcal U$ with $Y_n\to Y$.  Thus the closure of $\mathcal U$ is downward closed.

For directedness, take $X,Y$ in the closure and choose $X_n,Y_n\in\mathcal U$ converging to them.  Choose generators of $\mathscr A$ above $X_n$ and $Y_n$, and then use directedness of $\mathscr A$ to choose one generator above both.  A subsequential weak limit of the corresponding associated pyramids is a pyramid.  The outer condition places $X$ and $Y$ in it, so it contains a common dominator $Z$ of $X$ and $Y$.  The inner condition supplies spaces in $\mathcal U$ converging to $Z$, and therefore $Z$ belongs to the closure.  Thus \eqref{eq:directed-union-closure} is directed.  It is Box closed by definition and hence is a pyramid.
	This completes the proof.
\end{proof}

\section{Horocones over metric bases}\label{app:horocones}

Let $(Z,d_Z,\nu)$ be an ordinary mm-space, and let $\eta$ be a Borel probability measure on $(0,+\infty)$.  Define
\begin{align}
 h_Z((z,s),(z',t))
 &\coloneqq\arcosh\left(1+\frac{d_Z(z,z')^2+(s-t)^2}{2st}\right),
 \label{eq:general-horocone-symmetric}\\
D_Z((z,s),(z',t)) &\coloneqq h_Z((z,s),(z',t))+\log\frac{s}{t}. \label{eq:general-horocone-directed}
\end{align}
Write the resulting triple as
\begin{equation}\label{eq:general-horocone-space}
 \mathsf H(Z,\eta)\coloneqq(Z\times\supp\eta,D_Z,\nu\otimes\eta).
\end{equation}
Thus, when the height measure does not have full support on the positive half-line, the product is restricted to the support
of the product measure.

\begin{theorem}[Funk horocone construction and continuity]\label{thm:general-horocone}
The space $\mathsf H(Z,\eta)$ is a qm-space and satisfies
\begin{equation}\label{eq:general-horocone-symmetrization} D_Z^{\mathrm s}((z,s),(z',t)) =h_Z((z,s),(z',t))+\left|\log\frac{s}{t}\right|. \end{equation}
If $f\colon Z\to W$ is measure preserving and $1$-Lipschitz, then $f\times\operatorname{id}$ is a measure-preserving $1$-Lipschitz map between the corresponding horocones with the same height measure.

Suppose that $Z_n\to Z$ in ordinary box distance.  If the height measure is fixed, then the directed horocones converge in asymmetric box distance.  More generally, if $\eta_n\longrightarrow\eta$ weakly on $(0,+\infty)$, then the horocones built from $(Z_n,\eta_n)$ converge to that built from $(Z,\eta)$.  If $Z$ belongs to the box closure of the factors of a family of bases, its horocone with fixed height measure belongs to the box closure of the factors of the horocones over that family.
\end{theorem}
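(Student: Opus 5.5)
The plan is to treat $\mathsf H(Z,\eta)$ as a product of the base with the upper half-line. Its symmetric part $h_Z$ is the hyperbolic-type distance of the warped product. The directed part differs from it by the exact potential $\log s$. I will organize the proof in three steps: the qm-space axioms, functoriality, and the continuity statements.

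\textbf{Step 1: qm-space axioms and symmetrization.} I first check that $h_Z$ is a metric. For a single geodesic base this is the upper half-plane distance. In general I use the formula $\cosh h_Z=1+(d^2+(s-t)^2)/(2st)$. This is the hyperbolic distance between the points $(0,s)$ and $(d_Z(z,z'),t)$ of the half-plane. The triangle inequality then follows from the half-plane triangle inequality applied to planar configurations with side lengths $d_Z(z,z'),d_Z(z',z'')\ge d_Z(z,z'')$. This needs a monotonicity-in-$d$ argument together with the fact that three points of a metric space embed isometrically in the plane, so that a planar comparison triangle exists.

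Since $\log(s/t)$ telescopes, $D_Z$ satisfies the triangle inequality. The vertical identity $h_Z\ge|\log(s/t)|$ gives $D_Z\ge0$ and $\max\{D_Z(a,b),D_Z(b,a)\}=h_Z+|\log(s/t)|$, which is \eqref{eq:general-horocone-symmetrization}. This symmetrization is bi-Lipschitz equivalent to $h_Z$, with factor $2$. On sets where $s$ stays in a compact subinterval of $(0,+\infty)$ it is also equivalent to the product metric of $d_Z$ and $|s-t|$. Completeness follows because a Cauchy sequence has bounded $\log s$, and separability follows from the product structure. Full support holds on $Z\times\supp\eta$.

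\textbf{Step 2: functoriality.} Functoriality is immediate. The expression $h_Z$ is increasing in $d_Z$, the potential term does not involve $z$, and $f\times\operatorname{id}$ pushes $\nu\otimes\eta$ to $\nu_W\otimes\eta$.

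\textbf{Step 3: continuity.} For $Z_n\to Z$ in box distance I use the relation and coupling characterization. There are couplings $\pi_n$ and sets $S_n$ with $\pi_n(S_n)\ge1-\eps_n$ and $|d_{Z_n}-d_Z|\le\eps_n$ on pairs from $S_n$. I couple the heights by a coupling of $\eta_n$ and $\eta$ with $|\log s-\log s'|\le\eps_n$ outside mass $\eps_n$. Such a coupling exists because weak convergence on $(0,+\infty)$ gives a Prokhorov distance in the $\log$-coordinate tending to zero.

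On heights restricted to $[1/L,L]$, both $h_Z$ and $\log(s/t)$ are uniformly continuous in $(d,s,t)$. The product relation therefore has directed distortion at most $\omega_L(\eps_n)$ for a modulus of continuity $\omega_L$. Outside $[1/L,L]$ the mass is $\eta([1/L,L]^c)+o(1)$, which is small uniformly in $n$ by tightness.

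This gives a small distortion only on a truncated region, not a Box estimate directly. To handle this I use the observable form \eqref{eq:qm-box-distance}, or equivalently \Cref{lem:bounded-kernel-transfer} combined with \eqref{eq:measurement-convergence-criterion}, in the following order:
\begin{itemize}
\item[(a)] truncated-kernel distortion on the high-mass set;
\item[(b)] \Cref{lem:one-sided-extension} to extend the one-sided observables;
\item[(c)] conversion to a Box estimate.
\end{itemize}
I expect the unbounded distortion near $s\to0$ and $s\to\infty$, where $\log(s/t)$ blows up, to be the main obstacle. The device in (a)--(c) is how I plan to get around it.

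\textbf{Step 4: the closure statement.} The final closure statement follows from Step 3 with the height measure fixed. Take bases $Z_k$ that are factors of spaces in the family and converge to $Z$. By Step 2 their horocones are factors of the corresponding horocones over the family, and by Step 3 they converge to $\mathsf H(Z,\eta)$ in Box distance. Hence $\mathsf H(Z,\eta)$ lies in the Box closure.
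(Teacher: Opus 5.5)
Your Steps 1, 2 and 4, and the product coupling in Step 3, follow the paper's proof. The shared ingredients are: a hyperbolic lift for the triangle inequality, telescoping of $\log(s/t)$, bounded logarithmic heights for completeness, monotonicity of $h_Z$ in $d_Z$ for functoriality, a product relation on a compact height window followed by exhaustion, and functoriality plus continuity for the closure clause.

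In the triangle inequality your two ingredients are alternatives, not complements. Monotonicity in $d$ with the collinear placement $0$, $d_Z(z,z')$, $d_Z(z,z')+d_Z(z',z'')$ already closes the argument inside the half-plane. A planar comparison triangle instead has to be lifted into upper half-space $\mathbb H^3$, which is what the paper does.

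The real problem is in Step 3: the obstacle you anticipate does not exist, and one of your remedies does not work. Write $h(d,s,t)$ for $h_Z$ at base distance $d$ and heights $s,t$.

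\begin{itemize}
\item On your relation $S$ both heights lie in a slight enlargement of $[1/L,L]$. There $h$ is Lipschitz uniformly for all $d\ge0$: the hyperbolic triangle inequality gives $|h(d,s,t)-h(d',s',t')|\le|\log(s/s')|+L\sqrt{(d-d')^2+(t-t')^2}$. The paper uses the equivalent bound $\partial_r\le1/\sqrt{st}\le1/a$.
\item So your bound $\omega_L(\eps_n)$ controls the full, untruncated directed distances on all of $S$. The behaviour as $s\to0$ or $s\to+\infty$ is charged entirely to the exceptional mass $1-\pi(S)$.
\item The observable form \eqref{eq:qm-box-distance} then gives a Box bound at once. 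For $f\in\Lipplus$ on one side, put $g(y)\coloneqq\inf_{(x',y')\in S}\{f(x')+d(y',y)\}$. This $g$ is one-sided $1$-Lipschitz on the other side and satisfies $f(x)-\omega\le g(y)\le f(x)$ for $(x,y)\in S$. Hence $\Box\le\max\{1-\pi(S),2\omega\}$, after closing $S$, which does not increase distortion.
\end{itemize}

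This is the relation characterization the paper invokes, with no truncation and no bounded-range restriction.

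Your alternative through \Cref{lem:bounded-kernel-transfer} and \eqref{eq:measurement-convergence-criterion} is not equivalent. That lemma compares two metrics on a single probability space, and it yields only equality of subsequential pyramid limits. This is strictly weaker than Box convergence; concentrating spheres converge as pyramids to a point but not in Box distance. It would therefore not support the Box-closure assertion in your Step 4.

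Drop that route and the truncation, and your Step 3 coincides with the paper's argument. The paper treats fixed $\eta$ by the diagonal height coupling, under which the logarithmic increments agree exactly, and varying $\eta_n$ by uniform continuity on a compact window, exactly as you do.
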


\begin{proof}
For any three points of $Z$, their mutual distances are the side lengths of a possibly degenerate Euclidean triangle.  Realize this triangle in $\R^2$ and place the three selected heights above its vertices in the upper-half-space model of hyperbolic $3$-space.  Formula \eqref{eq:general-horocone-symmetric} is the hyperbolic distance between the lifted points.  The hyperbolic triangle inequality proves the triangle inequality for $h_Z$ on an arbitrary metric base.

We have $h_Z((z,s),(z',t))\ge|\log(s/t)|$.  Thus the endpoint potential $(z,s)\mapsto-\log s$ is $1$-Lipschitz for $h_Z$, and adding its endpoint increment gives a nonnegative quasi-pseudometric with symmetrization \eqref{eq:general-horocone-symmetrization}.  If a sequence is Cauchy for this symmetrization, its logarithmic heights are Cauchy and stay in a compact subinterval of $(0,+\infty)$.  The cosh formula then forces the base coordinates to be $d_Z$-Cauchy.  Completeness, separability, and full support follow.  Monotonicity of \eqref{eq:general-horocone-symmetric} in $d_Z$ proves functoriality.

For continuity, first fix $0<a<b<+\infty$ and couple the heights diagonally on $[a,b]$.  The function
\[ r\longmapsto2\arsinh \frac{\sqrt{r^2+(s-t)^2}}{2\sqrt{st}} \]
has derivative at most $1/a$ when $s,t\in[a,b]$.  A base relation of distortion at most $\delta$ therefore gives a directed product relation of distortion at most $\delta/a$, because the logarithmic increments agree exactly.  Its exceptional mass is bounded by the base exceptional mass plus $\eta((0,+\infty)\setminus[a,b])$.  Let the base error tend to zero and then let the interval exhaust the positive half-line.

If the height measures vary, use a Prokhorov coupling on a compact interval whose endpoints are continuity points of $\eta$.  Outside arbitrarily small mass, the paired heights lie in the interval and their difference tends to zero.  On a high-mass compact base relation, both functions in \eqref{eq:general-horocone-directed} are uniformly continuous in the base distance and the two heights.  The product relation has vanishing directed distortion.  Compact exhaustion removes both truncations.  Finally, functoriality gives exact domination before approximation, and the two continuity statements pass to the box closure of the directed base family.  This completes the proof.
\end{proof}

\subsection{Transfer proofs for general radial laws}\label{app:general-transfer-proofs}

\begin{lemma}[Uniform horospherical approximation]
\label{lem:general-horospherical-approximation}
There is a function $\delta$ such that $\delta(L)\downarrow0$ as $L\to\infty$ and, for every $n$, if
$x=\tanh\rho\,\theta$ and $y=\tanh\sigma\,\omega$ satisfy $\rho,\sigma\ge L$, then
\begin{equation}\label{eq:general-horospherical-approximation}
 \left|
 \dF(x,y)-
 \Omega_{\sqrt n/\cosh\rho,\sqrt n/\cosh\sigma}
        (\sqrt n|\theta-\omega|)
 \right|
 \le\delta(L).
\end{equation}
\end{lemma}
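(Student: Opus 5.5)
The plan is to compare the two expressions term by term, using the master formula of \Cref{thm:gamma-master}. The first step is to show that the endpoint-potential parts agree exactly. With $s=\sqrt n/\cosh\rho$ and $t=\sqrt n/\cosh\sigma$ we get $\log(s/t)=\log\cosh\sigma-\log\cosh\rho$. This is exactly the increment $\phi(y)-\phi(x)$ in \eqref{eq:master-Funk}. The difference in \eqref{eq:general-horospherical-approximation} therefore reduces to $|\arcosh A-\arcosh B|$, where, writing $d\coloneqq|\theta-\omega|\in[0,2]$,
\[
 A\coloneqq\cosh(\rho-\sigma)+\tfrac12\sinh\rho\sinh\sigma\,d^2,
 \qquad
 B\coloneqq\frac{r^2+s^2+t^2}{2st}\Big|_{r=\sqrt n d}
 =\tfrac12\cosh\rho\cosh\sigma\,d^2+\tfrac12\Bigl(\frac{\cosh\rho}{\cosh\sigma}+\frac{\cosh\sigma}{\cosh\rho}\Bigr).
\]
The factor $n$ cancels completely in $B$. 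This cancellation is what will make $\delta$ independent of $n$.

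The second step is a multiplicative comparison $A/B\in[(1+\eps(L))^{-1},1+\eps(L)]$, with $\eps(L)\to0$. Both $A$ and $B$ are sums of two nonnegative terms, so it suffices to compare the corresponding terms separately.

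\begin{enumerate}[label=\textup{(\alph*)}]
\item For the angular terms, the ratio is $\tanh\rho\tanh\sigma\in[\tanh^2L,1]$.
\item For the radial terms, $\cosh\rho/\cosh\sigma=e^{\rho-\sigma}(1+e^{-2\rho})/(1+e^{-2\sigma})$, and the reciprocal expression is analogous. The ratio of each of these to the matching exponential in $\cosh(\rho-\sigma)=\frac12(e^{\rho-\sigma}+e^{\sigma-\rho})$ lies in $[(1+e^{-2L})^{-1},1+e^{-2L}]$.
\end{enumerate}

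Hence we may take $1+\eps(L)=\max\{\coth^2L,\,1+e^{-2L}\}$. This quantity is decreasing in $L$ and tends to $1$.

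The third step handles the one real subtlety. The function $\arcosh$ is not Lipschitz in $\log w$ near $w=1$, so a ratio bound alone does not immediately give an additive bound. I would get around this with the elementary inequality $\cosh(a+b)\ge\cosh a\cosh b$ for $a,b\ge0$. It gives $\arcosh(\lambda w)\le\arcosh w+\arcosh\lambda$ whenever $\lambda,w\ge1$. Since $A,B\ge1$, the bound $A\le(1+\eps)B$ gives $\arcosh A\le\arcosh B+\arcosh(1+\eps)$. The symmetric bound is obtained the same way. If, say, $A\le B$, then one of the inequalities is trivial.

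We can therefore set $\delta(L)\coloneqq\arcosh(1+\eps(L))$. This is monotone, tends to $0$, and does not depend on $n$, $\theta$, or $\omega$. The main obstacle is exactly this degeneracy of $\arcosh$ near $1$, and the subadditivity trick above resolves it.
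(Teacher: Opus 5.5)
Your proposal is correct and follows essentially the same route as the paper. Both arguments proceed in three steps. First, the endpoint-potential terms agree exactly. Second, the two $\arcosh$ arguments are compared multiplicatively, term by term. In your version, the direct factorization $\cosh\rho/\cosh\sigma=e^{\rho-\sigma}(1+e^{-2\rho})/(1+e^{-2\sigma})$ does the job of the paper's estimate $|\log\cosh u-(u-\log2)|\le e^{-2L}$. Third, subadditivity $\arcosh(cC)\le\arcosh C+\arcosh c$ turns the ratio bound into the additive bound $\delta(L)=\arcosh(1+\eps(L))$.
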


\begin{proof}
The endpoint-potential terms agree exactly:
\[
 \log\frac{\sqrt n/\cosh\rho}{\sqrt n/\cosh\sigma}
 =\log\cosh\sigma-\log\cosh\rho.
\]
It remains to compare the arguments of $\arcosh$.  Write the Klein argument in \Cref{eq:master-Funk} as
\[
 C_{\mathrm K}
 \coloneqq\cosh(\rho-\sigma)
  +\frac12\sinh\rho\sinh\sigma|\theta-\omega|^2.
\]
The upper-half-space argument in \Cref{eq:upper-half-Funk} is
\[
 C_{\mathrm H}
 \coloneqq\cosh(\log\cosh\rho-\log\cosh\sigma)
  +\frac12\cosh\rho\cosh\sigma|\theta-\omega|^2.
\]
Uniformly for $u\ge L$,
\[
 \begin{aligned}
 \left|\log\cosh u-(u-\log2)\right|
 &=\log(1+e^{-2u})\le e^{-2L},\\
 |\tanh u-1|&=\frac{2}{e^{2u}+1}\le2e^{-2L}.
 \end{aligned}
\]
The first estimate bounds the difference between $\log\cosh\rho-\log\cosh\sigma$ and $\rho-\sigma$ by $2e^{-2L}$.
Since $|v-w|\le c$ implies $e^{-c}\cosh w\le\cosh v\le e^c\cosh w$, the ratio of the first terms of
$C_{\mathrm H}$ and $C_{\mathrm K}$ tends uniformly to one.  The second estimate and
\[
 \sinh\rho\sinh\sigma
 =\cosh\rho\cosh\sigma\tanh\rho\tanh\sigma
\]
give the same conclusion for the second terms.  Adding the two nonnegative terms yields a function
$\varepsilon(L)\downarrow0$ such that
\[
 e^{-\varepsilon(L)}C_{\mathrm K}
 \le C_{\mathrm H}
 \le e^{\varepsilon(L)}C_{\mathrm K}.
\]

For $C\ge1$ and $c\ge1$, the hyperbolic addition formula gives
\[
 \cosh(\arcosh C+\arcosh c)
 =cC+\sqrt{C^2-1}\sqrt{c^2-1}\ge cC.
\]
Monotonicity of $\cosh$ therefore gives
\[
 \arcosh(cC)\le\arcosh C+\arcosh c.
\]
Apply this inequality to $C_{\mathrm H}\le e^{\varepsilon(L)}C_{\mathrm K}$ and
$C_{\mathrm K}\le e^{\varepsilon(L)}C_{\mathrm H}$.  Then
\[
 |\arcosh C_{\mathrm H}-\arcosh C_{\mathrm K}|
 \le\arcosh(e^{\varepsilon(L)}).
\]
Thus \Cref{eq:general-horospherical-approximation} holds with
$\delta(L)\coloneqq\arcosh(e^{\varepsilon(L)})$, and $\delta(L)\downarrow0$ as $L\to\infty$.
This completes the proof.
\end{proof}

\begin{proof}[Proof of \Cref{thm:joint-horocone-pyramid-continuity}]
From an arbitrary subsequence, take a further subsequence along which
$\mathcal H(\mathcal P_n,\eta_n)$ converges weakly to an asymmetric pyramid $\mathcal Q$.  We prove along this
subsequence that
\[
 \mathcal Q=\mathcal H(\mathcal P,\eta).
\]
Sequential compactness of the space of asymmetric pyramids will then give convergence of the full sequence.

We first prove the lower inclusion.  Fix $Z\in\mathcal P$.  The inner condition for ordinary pyramid convergence
provides $Z_n\in\mathcal P_n$ such that
\[
 Z_n\longrightarrow Z\qquad(n\to\infty)
\]
in ordinary Box distance.  Apply \Cref{thm:general-horocone} to this base convergence and to
$\eta_n\Rightarrow\eta$.  Then
\[
 \mathsf H(Z_n,\eta_n)
 \longrightarrow
 \mathsf H(Z,\eta)
 \qquad(n\to\infty)
\]
in asymmetric Box distance.  Since $\mathsf H(Z_n,\eta_n)$ belongs to
$\mathcal H(\mathcal P_n,\eta_n)$ for every $n$, the outer condition for sequential Painlev\'e--Kuratowski
convergence of pyramids gives $\mathsf H(Z,\eta)\in\mathcal Q$.  The space $Z\in\mathcal P$ was arbitrary, while
$\mathcal Q$ is downward closed and asymmetric-Box closed.  Therefore, \Cref{eq:joint-horocone-pyramid} gives
\begin{equation}\label{eq:joint-horocone-lower-inclusion}
 \mathcal H(\mathcal P,\eta)\subset\mathcal Q.
\end{equation}

For the upper inclusion, fix $N\ge1$ and $R>0$, and suppose that
\[
 \nu_n\in
 \mathcal M(\mathcal H(\mathcal P_n,\eta_n);N,R),
 \qquad
 \nu_n\Rightarrow\nu
 \qquad(n\to\infty).
\]
By the two closures in \Cref{eq:pyramid-bounded-measurement,eq:joint-horocone-pyramid}, perturbing each $\nu_n$
by less than $1/n$ in Prokhorov distance allows us to choose an ordinary mm-space $Z_n\in\mathcal P_n$ and a
$1$-Lipschitz map
\[
 F_n\colon\mathsf H(Z_n,\eta_n)\longrightarrow[-R,R]^N
\]
such that
\begin{equation}\label{eq:joint-measurement-realization}
 \nu_n=(F_n)_*(\mu_{Z_n}\otimes\eta_n).
\end{equation}
The perturbed measures still converge weakly to $\nu$, and we retain the notation $\nu_n$ for them.  Applying the
coordinatewise formula in \Cref{lem:one-sided-extension} with zero error extends $F_n$ from
$Z_n\times\supp\eta_n$ to $Z_n\times(0,+\infty)$ without changing its pushforward measure.

Choose nested compact intervals
\[
 J_\ell=[a_\ell,b_\ell]\uparrow(0,+\infty)
 \qquad(\ell\to\infty)
\]
whose endpoints all have zero $\eta$-measure.  Fix one interval $J\coloneqq J_\ell$.  For this $J$, $N$, and $R$,
use the objects $H,J^*,\mathscr K,d_{\mathscr K}$ supplied by \Cref{thm:profile-path-metric}.  For $z\in Z_n$,
define
\[
 g_{n,z}(y)\coloneqq F_n(z,y),
 \qquad y\in J^*.
\]
Applying the $1$-Lipschitz property of $F_n$ to pairs with the same base point shows that
$g_{n,z}\in\mathscr K$.  For every $z,z'\in Z_n$ and $s,t\in J^*$, it also gives
\[
 \begin{aligned}
 d_N^+(g_{n,z}(s),g_{n,z'}(t))
 &\le\Omega_{s,t}(d_{Z_n}(z,z')),\\
 d_N^+(g_{n,z'}(s),g_{n,z}(t))
 &\le\Omega_{s,t}(d_{Z_n}(z,z')).
 \end{aligned}
\]
The definition of $r_0$ in \Cref{thm:profile-path-metric} therefore yields
\begin{equation}\label{eq:joint-profile-factor-map}
 d_{\mathscr K}(g_{n,z},g_{n,z'})
 \le\min\{H,d_{Z_n}(z,z')\}
 \le d_{Z_n}(z,z').
\end{equation}
In particular, $z\mapsto g_{n,z}$ is Borel and $1$-Lipschitz.

Let $q_n^J$ be the pushforward of $\mu_{Z_n}$ under this profile map, and put
\[
 Z_n^J\coloneqq(\supp q_n^J,d_{\mathscr K},q_n^J).
\]
By \Cref{eq:joint-profile-factor-map}, we have $Z_n^J\preceq Z_n$ and hence $Z_n^J\in\mathcal P_n$.  Compactness
of $\mathscr K$ allows us, for this fixed $J$, to pass to a further subsequence such that
\[
 q_n^J\Rightarrow q^J
 \qquad(n\to\infty).
\]
Set
\[
 Z_J\coloneqq(\supp q^J,d_{\mathscr K},q^J).
\]
Weak convergence on the common compact metric space $\mathscr K$, together with a Prokhorov coupling, gives
$Z_n^J\to Z_J$ in ordinary Box distance.  Since $Z_n^J\in\mathcal P_n$, the outer condition for
$\mathcal P_n\to\mathcal P$ implies
\begin{equation}\label{eq:joint-profile-limit-base}
 Z_J\in\mathcal P.
\end{equation}

Consider the evaluation map on the common profile space,
\[
 \mathsf{ev}_J\colon\mathscr K\times J\longrightarrow[-R,R]^N,
 \qquad \mathsf{ev}_J(g,y)\coloneqq g(y).
\]
The metric $d_{\mathscr K}$ induces the uniform topology, so $\mathsf{ev}_J$ is continuous.  Moreover,
\Cref{eq:evaluation-cone-inequality} and
\[
 D_{Z_J}((g,s),(h,t))=\Omega_{s,t}(d_{\mathscr K}(g,h))
\]
show that its restriction to $Z_J\times J$ is $1$-Lipschitz on the restriction of $\mathsf H(Z_J,\eta)$ to
$Z_J\times(J\cap\supp\eta)$.  At finite $n$, the definition of the profile measure and
\Cref{eq:joint-measurement-realization} give the finite-measure relation
\begin{equation}\label{eq:joint-profile-submeasure-finite}
 (\mathsf{ev}_J)_*\bigl(q_n^J\otimes(\eta_n|_J)\bigr)
 =(F_n)_*\bigl(\mu_{Z_n}\otimes(\eta_n|_J)\bigr)
 \le\nu_n.
\end{equation}

Apply \Cref{lem:compact-profile-passage} with
$\varpi_n=q_n^J$, $\varpi=q^J$, $\xi_n=\eta_n$, $\xi=\eta$,
$\mu_n=(F_n)_*(\mu_{Z_n}\otimes\eta_n)$, $\mu=\nu$, and $E=\mathsf{ev}_J$.  Its compactness hypothesis follows
from \Cref{thm:profile-path-metric}.  The convergence $q_n^J\Rightarrow q^J$ is the subsequence just chosen,
$\eta_n\Rightarrow\eta$ is a hypothesis of the theorem, and $\nu_n\Rightarrow\nu$ is the measurement convergence
fixed above.  The endpoints of $J$ are $\eta$-null by construction, continuity of $\mathsf{ev}_J$ was just verified,
and \Cref{eq:joint-profile-submeasure-finite} supplies the required finite-stage submeasure relation.  We obtain
\begin{equation}\label{eq:joint-profile-submeasure-limit}
 (\mathsf{ev}_J)_*\bigl(q^J\otimes(\eta|_J)\bigr)
 \le\nu.
\end{equation}

Apply the coordinatewise formula in \Cref{lem:one-sided-extension}, again with zero error, to extend
$\mathsf{ev}_J|_{Z_J\times(J\cap\supp\eta)}$ to a $1$-Lipschitz map
$\widetilde{\mathsf{ev}}_J$ on all of $\mathsf H(Z_J,\eta)$.  Let
\[
 \rho_J\coloneqq(\widetilde{\mathsf{ev}}_J)_*(q^J\otimes\eta).
\]
By \Cref{eq:joint-profile-limit-base,eq:joint-horocone-pyramid}, the space $\mathsf H(Z_J,\eta)$ belongs to
$\mathcal H(\mathcal P,\eta)$.  Thus
\begin{equation}\label{eq:joint-target-measurement}
 \rho_J\in\mathcal M(\mathcal H(\mathcal P,\eta);N,R).
\end{equation}
The extension agrees with $\mathsf{ev}_J$ on $Z_J\times(J\cap\supp\eta)$, so the left-hand side of
\Cref{eq:joint-profile-submeasure-limit} is a common submeasure of $\rho_J$ and $\nu$ with mass $\eta(J)$.
Couple this common part diagonally and couple the two remaining parts, which have the same mass, arbitrarily.  The
coupling characterization of Prokhorov distance gives
\begin{equation}\label{eq:joint-exhaustion-distance}
 d_{\mathrm P}(\rho_J,\nu)
 \le2\eta((0,+\infty)\setminus J).
\end{equation}

For each $J_\ell$, the subsequence of profile measures and the limiting base $Z_{J_\ell}$ may be chosen anew.  The
measurement limit $\nu$ and the limiting height law $\eta$ do not change under these further subsequences.  The set
$\mathcal M(\mathcal H(\mathcal P,\eta);N,R)$ is compact by the property stated after
\Cref{eq:pyramid-bounded-measurement}.  Therefore,
\Cref{eq:joint-target-measurement,eq:joint-exhaustion-distance} and the exhaustion clause of
\Cref{lem:compact-profile-passage} imply
\[
 \nu\in\mathcal M(\mathcal H(\mathcal P,\eta);N,R).
\]
Since $N$ and $R$ were arbitrary, \Cref{eq:measurement-convergence-criterion} gives
\begin{equation}\label{eq:joint-horocone-upper-inclusion}
 \mathcal Q\subset\mathcal H(\mathcal P,\eta).
\end{equation}
Combining \Cref{eq:joint-horocone-lower-inclusion,eq:joint-horocone-upper-inclusion} gives
$\mathcal Q=\mathcal H(\mathcal P,\eta)$.  The original subsequence was arbitrary, so
\Cref{eq:joint-horocone-pyramid-continuity} holds for the full sequence.  This completes the proof.
\end{proof}

\begin{proof}[Proof of \Cref{thm:radial-law-transfer}]
Define the base mm-space and height law by
\[
 Z_n\coloneqq(\sqrt nS^{n-1},|\cdot|,\sigma_{n-1}),
 \qquad
 \eta_n\coloneqq\operatorname{Law}(\mathsf Y_n),
\]
and put
\begin{equation}\label{eq:comparison-space}
 \widehat X_n\coloneqq\mathsf H(Z_n,\eta_n).
\end{equation}
When $\eta_n$ does not have full support on $(0,+\infty)$, both sides are restricted to the support of the product measure,
so this is an exact identity of qm-spaces.

We first verify that the original Funk spaces and $\widehat X_n$ have the same subsequential pyramid limits.  Put
$\rho_n\coloneqq\artanh R_n$.  Then
\[
 R_n=\tanh\rho_n,
 \qquad
 \cosh\rho_n=\frac1{\sqrt{1-R_n^2}}=\frac{\sqrt n}{\mathsf Y_n}.
\]
The convergence $\eta_n=\operatorname{Law}(\mathsf Y_n)\Rightarrow\eta$ implies tightness of $(\mathsf Y_n)$.  Since
$R_n=0$ implies $\mathsf Y_n=\sqrt n$,
\[
 \mathbb P(R_n=0)
 \le\mathbb P(\mathsf Y_n\ge\sqrt n)
 \longrightarrow0
 \qquad(n\to\infty).
\]
For every fixed $L>0$, we also have
\[
 \mathbb P(\rho_n\le L)
 \le\mathbb P\left(\mathsf Y_n\ge\frac{\sqrt n}{\cosh L}\right)
 \longrightarrow0
 \qquad(n\to\infty).
\]
We may therefore choose $L_n\uparrow\infty$ such that
\begin{equation}\label{eq:joint-radial-good-set}
 \mathbb P(R_n>0,\ \rho_n\ge L_n)
 \longrightarrow1
 \qquad(n\to\infty).
\end{equation}

On the event $R_n>0$, the polar-coordinate map
\[
 R_n\theta\longmapsto(\sqrt n\theta,\mathsf Y_n)
\]
pushes $\mu_n$ forward to $\sigma_{n-1}\otimes\eta_n$ by independence of the radial and angular variables.  When two
independent samples both belong to the event in \Cref{eq:joint-radial-good-set},
\Cref{lem:general-horospherical-approximation} with $L=L_n$ bounds the difference between the Funk distance and the
directed distance of $\mathsf H(Z_n,\eta_n)$ by $\delta(L_n)$.  The exceptional mass tends to zero as
$n\to\infty$, and $\delta(L_n)\to0$.  Applying \Cref{lem:bounded-kernel-transfer} at each truncation level shows that
\begin{equation}\label{eq:joint-radial-same-subsequential-limits}
 \Pyr(\B^n,\dF,\mu_n)
 \quad\text{and}\quad
 \Pyr(\widehat X_n)
\end{equation}
have the same subsequential pyramid limits.

On the other hand, \cite[Theorem~1.1(3)]{shioya2017spheres} gives
\begin{equation}\label{eq:joint-spherical-pyramid-limit}
 \Pyr(Z_n)\longrightarrow\mathcal G_0
 \qquad(n\to\infty)
\end{equation}
weakly as ordinary pyramids.  The height-law convergence $\eta_n\Rightarrow\eta$ is
\Cref{eq:height-convergence-assumption}.  Apply
\Cref{thm:joint-horocone-pyramid-continuity} with
$\mathcal P_n=\Pyr(Z_n)$, $\mathcal P=\mathcal G_0$, and the height laws $\eta_n\Rightarrow\eta$.  We obtain
\begin{equation}\label{eq:joint-comparison-pyramid-limit}
 \mathcal H(\Pyr(Z_n),\eta_n)
 \longrightarrow
 \mathcal H(\mathcal G_0,\eta)
 \qquad(n\to\infty).
\end{equation}

We identify the left-hand side of \Cref{eq:joint-comparison-pyramid-limit}.  Since $Z_n\in\Pyr(Z_n)$,
\[
 \Pyr(\mathsf H(Z_n,\eta_n))
 \subset\mathcal H(\Pyr(Z_n),\eta_n).
\]
Conversely, fix $W\in\Pyr(Z_n)$.  The definition of the principal pyramid provides ordinary mm-spaces $W_m$ such that
$W_m\preceq Z_n$ and $W_m\to W$ as $m\to\infty$.  Functoriality in \Cref{thm:general-horocone} gives
$\mathsf H(W_m,\eta_n)\preceq\mathsf H(Z_n,\eta_n)$.  Apply the Box-continuity assertion of the same theorem to
$W_m\to W$ and the constant height-measure sequence.  Then
\[
 \mathsf H(W_m,\eta_n)\longrightarrow\mathsf H(W,\eta_n)
 \qquad(m\to\infty)
\]
in asymmetric Box distance.  It follows that
$\mathsf H(W,\eta_n)\in\Pyr(\mathsf H(Z_n,\eta_n))$.  Since $W$ was arbitrary and the right-hand side is
asymmetric-Box closed,
\begin{equation}\label{eq:principal-base-horocone-identity}
 \mathcal H(\Pyr(Z_n),\eta_n)
 =\Pyr(\mathsf H(Z_n,\eta_n))
 =\Pyr(\widehat X_n).
\end{equation}
Moreover, \Cref{eq:Gaussian-base-joint-specialization} gives
\begin{equation}\label{eq:joint-radial-target-identity}
 \mathcal H(\mathcal G_0,\eta)=\mathcal H_\eta.
\end{equation}

Combining
\Cref{eq:joint-comparison-pyramid-limit,eq:principal-base-horocone-identity,eq:joint-radial-target-identity} yields
\[
 \Pyr(\widehat X_n)\longrightarrow\mathcal H_\eta
 \qquad(n\to\infty).
\]
Finally, \Cref{eq:joint-radial-same-subsequential-limits} transfers this limit to the original Funk spaces.  This is
\Cref{eq:radial-law-pyramid-limit}.  This completes the proof.
\end{proof}

\begin{proof}[Proof of \Cref{thm:height-law-pyramid-continuity}]
We first prove that, for every Borel probability measure $\zeta$ on $(0,+\infty)$,
\begin{equation}\label{eq:Gaussian-base-joint-specialization}
 \mathcal H(\mathcal G_0,\zeta)=\mathcal H_\zeta.
\end{equation}
Each $\mathbb R_\gamma^k$ belongs to $\mathcal G_0$, so
\Cref{eq:general-horocone-pyramid,eq:joint-horocone-pyramid} gives
$\mathcal H_\zeta\subset\mathcal H(\mathcal G_0,\zeta)$.  Conversely, fix $Z\in\mathcal G_0$.  By the Box-closure
definition of the Gaussian pyramid, there are ordinary mm-spaces $W_m$ and integers $k_m$ such that
\[
 W_m\preceq\mathbb R_\gamma^{k_m},
 \qquad
 W_m\longrightarrow Z
 \qquad(m\to\infty).
\]
Functoriality in \Cref{thm:general-horocone} gives
$\mathsf H(W_m,\zeta)\preceq\mathsf H(\mathbb R_\gamma^{k_m},\zeta)$, and hence
$\mathsf H(W_m,\zeta)\in\mathcal H_\zeta$.  Apply the Box-continuity assertion of the same theorem to
$W_m\to Z$ and the constant height-measure sequence $\zeta_m=\zeta$.  Asymmetric-Box closedness of
$\mathcal H_\zeta$ then gives $\mathsf H(Z,\zeta)\in\mathcal H_\zeta$.  Since $Z\in\mathcal G_0$ was arbitrary,
the reverse inclusion follows and proves \Cref{eq:Gaussian-base-joint-specialization}.

Apply \Cref{thm:joint-horocone-pyramid-continuity} to the constant base-pyramid sequence
$\mathcal P_j=\mathcal P=\mathcal G_0$ and the height laws $\eta_j\Rightarrow\eta$.  Using
\Cref{eq:Gaussian-base-joint-specialization} on both sides gives
\Cref{eq:height-law-pyramid-continuity}.  This completes the proof.
\end{proof}

\section{Profile metrics and technical estimates}\label{app:profile-estimates}

\subsection{Radial approximation}

The radial input for the large-beta limit is a logarithmic Gamma linearization that is uniform in the ratio $\beta_n/n$.

\begin{lemma}\label{lem:radial-linearization}
Suppose that $\beta_n\to+\infty$.  The two logarithmic Gamma fluctuations
\[ \sqrt{\frac n2}\log\frac{U_n}{n/2}, \qquad \sqrt{\beta_n}\log\frac{V_n}{\beta_n} \]
converge jointly to independent standard Gaussian variables.  Moreover,
$c_n(\mathbf R_n-\bar\rho_n)$ differs in probability from
\begin{equation}\label{eq:radial-linear-part} \sqrt{\frac{\beta_n}{n+2\beta_n}} \sqrt{\frac n2}\log\frac{U_n}{n/2} -\sqrt{\frac{n}{2(n+2\beta_n)}} \sqrt{\beta_n}\log\frac{V_n}{\beta_n} \end{equation}
by a quantity tending to zero, without any restriction on the ratio $\beta_n/n$.
\end{lemma}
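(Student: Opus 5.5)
The plan is to prove the joint Gaussian limit first, then reduce $c_n(\mathbf R_n-\bar\rho_n)$ to the linear combination \eqref{eq:radial-linear-part} by one exact identity followed by a one-term Taylor expansion.

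\textbf{Joint Gaussian limit.}
$U_n$ and $V_n$ are independent, so joint convergence follows from the separate limits. For $U_n\sim\Gamma(n/2,1)$ the classical CLT gives
\[
\frac{U_n-n/2}{\sqrt{n/2}}\Rightarrow N(0,1).
\]
Since $U_n/(n/2)\to1$ in probability, the delta method ($\log(1+x)=x+O(x^2)$) yields $\sqrt{n/2}\,\log\frac{U_n}{n/2}\Rightarrow N(0,1)$. The same argument with $\beta_n\to+\infty$ handles $V_n$. Write $A_n:=\sqrt{n/2}\log\frac{U_n}{n/2}$ and $B_n:=\sqrt{\beta_n}\log\frac{V_n}{\beta_n}$; both are tight.

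\textbf{Exact reduction to a log-ratio.}
By \eqref{eq:rho-gamma}, $\sinh^2\mathbf R_n=U_n/V_n$, and by definition $\sinh^2\bar\rho_n=(n/2)/\beta_n$. Set
\[
\Delta_n:=\log\frac{U_n/V_n}{(n/2)/\beta_n}=\sqrt{\tfrac2n}\,A_n-\tfrac1{\sqrt{\beta_n}}\,B_n .
\]
Consider $g(\ell):=\arsinh(e^{\ell/2})$. Then $\mathbf R_n-\bar\rho_n=g(\bar\ell_n+\Delta_n)-g(\bar\ell_n)$ with $\bar\ell_n=\log\frac{n}{2\beta_n}$. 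Differentiating,
\[
g'(\ell)=\frac{e^{\ell/2}}{2\sqrt{1+e^\ell}}=\tfrac12\tanh g(\ell),
\]
so $g'(\bar\ell_n)=q_n/2$. Moreover $|g''|\le 1/8$ uniformly in $\ell$, because $g''=\tfrac14\,\mathrm{sech}^2 g\cdot g'$ and both factors are bounded. This uniformity in $\ell$ is exactly what removes any condition on $\beta_n/n$.

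Taylor's theorem then gives
\[
c_n(\mathbf R_n-\bar\rho_n)=c_n\frac{q_n}{2}\Delta_n+O(c_n\Delta_n^2).
\]

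\textbf{Matching the main term.}
Using $c_n=\sqrt{2\beta_n}$ and $q_n=\sqrt{n/(n+2\beta_n)}$:
\begin{itemize}
\item the coefficient of $A_n$ is $\sqrt{2\beta_n}\cdot\tfrac{q_n}{2}\sqrt{2/n}=\sqrt{\beta_n/(n+2\beta_n)}$;
\item the coefficient of $B_n$ is $-\sqrt{2\beta_n}\cdot\tfrac{q_n}{2}/\sqrt{\beta_n}=-\sqrt{n/(2(n+2\beta_n))}$.
\end{itemize}
These match \eqref{eq:radial-linear-part}. Their squares sum to $\frac{2\beta_n+n}{2(n+2\beta_n)}=\tfrac12$, as used in \Cref{thm:radial-clt}.

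\textbf{The remainder (main obstacle).}
The remaining work is to show $c_n\Delta_n^2\to0$ in probability uniformly in the ratio $\beta_n/n$. Since $\Delta_n^2\le 2\bigl(\tfrac2nA_n^2+\beta_n^{-1}B_n^2\bigr)$,
\[
c_n\Delta_n^2\lesssim \frac{\sqrt{\beta_n}}{n}A_n^2+\frac{B_n^2}{\sqrt{\beta_n}} .
\]
The second term vanishes because $\beta_n\to\infty$. The first term is the problem when $\beta_n\gg n^2$, so the naive bound is not enough.

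To fix this, I would use the sharper remainder $|g''(\ell)|\lesssim \min\{e^{\ell/2},1\}$. For $\ell$ very negative, $\mathrm{sech}^2g\approx1$ and $g'\approx e^{\ell/2}/2$. On the high-probability event $|\Delta_n|\le1$, with $\bar\ell_n=\log(n/2\beta_n)$, this gives $|g''|\lesssim\sqrt{n/\beta_n}$ along the segment. The first term then becomes
\[
c_n\sqrt{n/\beta_n}\,\tfrac{2}{n}A_n^2\lesssim A_n^2/\sqrt n\to0 .
\]
Applying the same improved bound to the $B_n$ term only helps. Splitting along subsequences according to whether $\beta_n/n$ stays bounded or not completes the argument.
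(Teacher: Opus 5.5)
Your proposal is correct and is essentially the paper's proof. Both Taylor-expand $\arsinh(e^{s/2})$ about $\log\frac{n}{2\beta_n}$ and bound the second derivative on the event $|\Delta_n|\le1$ by a regime-adapted factor. The paper uses $q_n(1-q_n^2)$, giving a remainder $O((A_n^2+B_n^2)/\sqrt n)$. Your $\min\{\sqrt{n/\beta_n},1\}\asymp q_n$ gives $O(A_n^2/\sqrt n+B_n^2/\sqrt{\beta_n})$, which also suffices since $\beta_n\to\infty$.

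That single bound already covers all ratios $\beta_n/n$, so the closing subsequence split is unnecessary. Also, $g''=\tfrac12\operatorname{sech}^2g\cdot g'$ rather than $\tfrac14\operatorname{sech}^2g\cdot g'$, which is harmless.
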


\begin{proof}
If $W_\alpha\sim\Gamma(\alpha,1)$ and $\alpha\to+\infty$, then
\begin{equation}\label{eq:log-gamma-clt} \sqrt\alpha\log\frac{W_\alpha}{\alpha} \overset{\mathrm{d}}{\longrightarrow} N(0,1). \end{equation}
For the standard central limit theorem and delta method underlying
\eqref{eq:log-gamma-clt}, see \cite{billingsley1995probability}.
Indeed, for fixed $t\in\R$ and all sufficiently large $\alpha$,
\[ \mathbb E\exp\left(t\frac{W_\alpha-\alpha}{\sqrt\alpha}\right) =e^{-t\sqrt\alpha}\left(1-\frac{t}{\sqrt\alpha}\right)^{-\alpha} \longrightarrow e^{t^2/2}. \]
Taylor's theorem proves the central limit theorem for $(W_\alpha-\alpha)/\sqrt\alpha$.  On the event $|W_\alpha-\alpha|\le M\sqrt\alpha$, the inequality $|\log(1+x)-x|\le x^2$ for $|x|\le1/2$ gives
\[ \left|\sqrt\alpha\log\frac{W_\alpha}{\alpha} -\frac{W_\alpha-\alpha}{\sqrt\alpha}\right| \le\frac{M^2}{\sqrt\alpha} \]
whenever $\alpha\ge4M^2$.  Tightness proves \eqref{eq:log-gamma-clt}.  Independence gives the joint convergence.

For $h(s)\coloneqq\arsinh(e^{s/2})$,
\[ h'(s)=\frac12\tanh h(s), \qquad h''(s)=\frac14\tanh h(s)\bigl(1-\tanh^2h(s)\bigr). \]
The linear term in Taylor's formula is \eqref{eq:radial-linear-part}.  The logarithmic increment
\[ \log\frac{U_n}{n/2}-\log\frac{V_n}{\beta_n} \]
tends to zero in probability.  When its absolute value is at most one, the formula for $h''$ bounds the second derivative between the two Taylor points by a universal constant times $q_n(1-q_n^2)$.  Moreover,
\begin{align*}
 \frac{c_nq_n(1-q_n^2)}{n}
 &=\frac{(2\beta_n/n)^{3/2}}
 {\sqrt n(1+2\beta_n/n)^{3/2}}
 \le\frac1{\sqrt n},\\
 \frac{c_nq_n(1-q_n^2)}{\beta_n}
 &=\frac{2\sqrt{2\beta_n/n}}
 {\sqrt n(1+2\beta_n/n)^{3/2}}
 \le\frac1{\sqrt n}.
\end{align*}
The square of the logarithmic increment is at most
\[ \frac4n\left(\sqrt{\frac n2}\log\frac{U_n}{n/2}\right)^2 +\frac2{\beta_n} \left(\sqrt{\beta_n}\log\frac{V_n}{\beta_n}\right)^2. \]
The two squared variables are tight by \eqref{eq:log-gamma-clt}.  Taylor's theorem therefore makes the remainder tend to zero after multiplication by $c_n$.  This completes the proof.
\end{proof}

\subsection{Angular approximation}

The angular input identifies both the transformed and chordal high-dimensional spheres with the canonical Gaussian pyramid.

\begin{theorem}[Transformed-sphere Gaussian convergence]\label{thm:transformed-sphere-gaussian}
Let $a_n\to+\infty$ and let $\delta_n$ be the metric in \eqref{eq:transformed-sphere-metric}.  Then
\[ \Pyr(\sqrt nS^{n-1},\delta_n,\sigma_{n-1}) \longrightarrow\mathcal G_0. \]
For every fixed $k$, the lower inclusion is realized by the first $k$-coordinate projection on Borel sets whose mass tends to one and whose additive error tends to zero.  The same pyramid convergence holds for the Euclidean chord metric.
\end{theorem}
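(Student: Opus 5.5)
The plan is to reduce everything to the chordal case, which is already known, and then compare the two metrics. For the Euclidean chord metric on $\sqrt nS^{n-1}$ we invoke \cite[Theorem~1.1(3)]{shioya2017spheres}, exactly as in the proof of \Cref{thm:radial-law-transfer}. An ordinary mm-space is a qm-space with symmetric distance, and for symmetric spaces one-sided $1$-Lipschitz maps into $[-R,R]^N$ with $d_N^+$ coincide with $\ell^\infty$-$1$-Lipschitz maps. So the bounded-measurement criterion \eqref{eq:measurement-convergence-criterion} should read the same in both theories. I will assume that this compatibility lets the ordinary convergence to $\mathcal G_0$ be read as asymmetric convergence; I expect checking it carefully to be the only delicate point.

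\textbf{Upper inclusion.} Since $2a\arsinh(r/2a)\le r$ for $r\ge0$, we have $\delta_n\le|\cdot|$ on $\sqrt nS^{n-1}$. Hence the identity map from the chordal sphere to $(\sqrt nS^{n-1},\delta_n,\sigma_{n-1})$ is an exact domination map. By \eqref{eq:measurement-box-stability}, every bounded measurement of the transformed sphere is a bounded measurement of the chordal sphere. Passing to limits along a subsequence where $\Pyr(\sqrt nS^{n-1},\delta_n,\sigma_{n-1})$ converges to some $\mathcal Q$, the chordal convergence shows that $\mathcal M(\mathcal Q;N,R)\subset\mathcal M(\mathcal G_0;N,R)$ for all $N,R$. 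The membership criterion \eqref{eq:measurement-membership-criterion} then gives $\mathcal Q\subset\mathcal G_0$.

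\textbf{Lower inclusion.} Fix $k$ and $K>0$, let $p(z)=(z_1,\dots,z_k)$, and set $A_{n}=\{|p(z)|\le K\}$. Split pairs in $A_n$ according to their chord distance, using the threshold $M\coloneqq2K+1$.
\begin{itemize}
\item If $|z-z'|\ge M$, monotonicity of $\delta_n$ in the chord gives $\delta_n(z,z')\ge2a_n\arsinh(M/2a_n)$. This is at least $2K\ge|p(z)-p(z')|$ once $a_n$ is large.
\item If $|z-z'|\le M$, then $r-2a\arsinh(r/2a)\le Cr^3/a^2$, so $|p(z)-p(z')|\le|z-z'|\le\delta_n(z,z')+CM^3/a_n^2$.
\end{itemize}
Thus $p$ satisfies \eqref{eq:approx-domination-distance} on $A_n$ with error tending to zero. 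By the Maxwell--Poincar\'e law, $p_*\sigma_{n-1}\to\gamma^k$, so $\sigma_{n-1}(A_n)\to\gamma^k(\{|w|\le K\})$ and the restricted pushforwards converge to $\gamma^k|_{\{|w|\le K\}}$. Letting $K\to\infty$ along a diagonal sequence $K_n\to\infty$ slowly enough, the mass of $A_n$ tends to one and the pushforward tends to $\gamma^k$. \Cref{lem:high-mass-domination} then places $\mathbb R_\gamma^k$ in every subsequential limit, with the first $k$-coordinate projection realizing it as claimed. Downward closedness and Box closedness of the limit pyramid then give $\mathcal G_0\subset\mathcal Q$.

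Combining both inclusions with subsequential compactness yields $\Pyr(\sqrt nS^{n-1},\delta_n,\sigma_{n-1})\to\mathcal G_0$. The same projection argument with zero metric error reproves the chordal lower inclusion, so the chordal statement follows as well. The main obstacle I anticipate is the symmetric versus asymmetric identification of $\mathcal G_0$ and of Shioya's limit. I would handle it through the measurement criterion: for symmetric spaces, the measurement sets $\mathcal M(\cdot;N,R)$ are literally the same in both frameworks, and both Box closures are detected by these sets.
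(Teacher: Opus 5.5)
Your proof is correct. The lower inclusion is essentially the paper's, but the upper inclusion takes a different route.

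\textbf{Lower inclusion.} The paper uses the explicit window $|P_kz|\le a_n^{1/4}$ with chord threshold $3a_n^{1/4}$. You use a fixed $K$ with threshold $2K+1$ and then a slow diagonal; any $K_n\to\infty$ with $K_n=o(a_n^{2/3})$ works.

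\textbf{Upper inclusion, paper's route.} The paper does not call on the chordal theorem. It notes that a one-sided measurement $F_n$ for $\delta_n$ has chord-$1$-Lipschitz coordinates. It then composes $F_n$ with the radial projection $\R^n\to\sqrt nS^{n-1}$. This projection pushes $\gamma^n$ to $\sigma_{n-1}$ and is $\sqrt n/(\sqrt n-n^{1/4})$-Lipschitz on the shell $\bigl|\,|w|-\sqrt n\,\bigr|\le n^{1/4}$, which has almost full Gaussian mass. After rescaling and extending by \Cref{lem:one-sided-extension}, this gives measurements of $\mathbb R^n_\gamma\in\mathcal G_0$ that are Prokhorov-close to $(F_n)_*\sigma_{n-1}$. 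That construction is self-contained, proves the chordal and transformed clauses at once, and never leaves the one-sided measurement framework.

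\textbf{Upper inclusion, your route.} You use $\delta_n\le|\cdot|$ together with monotonicity of measurement sets under domination. This is shorter and isolates the only geometric input: the transformed metric is dominated by the chord. The cost is that the chordal clause is imported rather than proved, and you need the two identifications you allude to.
\begin{enumerate}
\item For a symmetric space, one-sided and ordinary bounded measurements coincide (apply $d_N^+$ in both orders). Hence the ordinary and asymmetric convergence criteria agree.
\item The paper's $\mathcal G_0$, generated by $G_0^k$ with its $\gamma_{1/2}$ factor, is the canonical Gaussian pyramid. This holds because $\mathbb R^k_\gamma\preceq G_0^k\preceq\mathbb R^{k+1}_\gamma$, via the coordinate projection and $(w,z)\mapsto(w/\sqrt2,z)$. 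You also use this fact silently when passing from $\mathbb R^k_\gamma\in\mathcal Q$ to $\mathcal G_0\subset\mathcal Q$.
\end{enumerate}
If the cited result is stated for the geodesic distance on the sphere, your domination step still works verbatim, since the geodesic distance dominates the chord.
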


\begin{proof}
The function $r\mapsto2a_n\arsinh(r/(2a_n))$ is increasing, concave, and vanishes at zero.  It is subadditive, so \eqref{eq:transformed-sphere-metric} is a metric.

For the lower inclusion, fix $k$.  The pushforward $(P_k)_*\sigma_{n-1}$ converges weakly to $\gamma^k$.  Restrict to
\[ A_n\coloneqq\{z\in\sqrt nS^{n-1}\colon|P_kz|\le a_n^{1/4}\}. \]
The mass of $A_n$ tends to one.  If $z,z'\in A_n$ and $|z-z'|\ge3a_n^{1/4}$, then
\[ \delta_n(z,z') \ge2a_n\arsinh\frac{3a_n^{1/4}}{2a_n} \ge2a_n^{1/4} \ge|P_kz-P_kz'| \]
for all sufficiently large $n$.  If $|z-z'|\le3a_n^{1/4}$, then
\[ 0\le |z-z'|-\delta_n(z,z') \le\frac{(3a_n^{1/4})^3}{24a_n^2}, \]
where the bound follows by integrating $1-(1+(r/(2a_n))^2)^{-1/2}$.  The last quantity tends to zero.  Thus $P_k|_{A_n}$ is $1$-Lipschitz up to a vanishing additive error.  The high-mass domination principle in \Cref{lem:high-mass-domination} puts $\gamma^k$ in every subsequential pyramid limit.  Letting $k$ vary proves the lower inclusion.

For the upper inclusion, $\delta_n(z,z')\le|z-z'|$.  Let $F_n\colon\sqrt nS^{n-1}\to[-R,R]^N$ be one-sided measurements for $\delta_n$.  Their coordinates are ordinary $1$-Lipschitz functions for the chord metric.  If $Z_n$ is standard Gaussian in $\R^n$, then
\[ \mathbb P\bigl(\bigl||Z_n|-\sqrt n\bigr|>n^{1/4}\bigr)\longrightarrow0 \]
by Chebyshev's inequality applied to $|Z_n|^2$.  On the complementary event, radial projection to $\sqrt nS^{n-1}$ has Lipschitz constant at most $\sqrt n/(\sqrt n-n^{1/4})$.  Multiplying $F_n$ by the reciprocal factor and using the extension in \Cref{lem:one-sided-extension} gives bounded Gaussian measurements whose pushforward measures approach $(F_n)_*\sigma_{n-1}$.  Every finite-dimensional standard Gaussian space belongs to $\mathcal G_0$, and the bounded-measurement criterion excludes every other limit.  Since $G_0^k$ is an exact factor of a standard Gaussian space under $(w,z)\mapsto(w/\sqrt2,z)$, $\mathcal G_0$ is exactly the canonical Gaussian pyramid.  This completes the proof.
\end{proof}

\subsection{Euclidean profiles}

A compact profile metric converts pairwise measurement inequalities into Gaussian product measurements.

\begin{lemma}\label{lem:euclidean-profile-metric}
Let $J\subset\R$ be a compact interval and let $A>0$.  Denote by $\mathscr L(J,A)$ the set of maps
$g\colon J\to[-A,A]^N$ whose coordinates are ordinary $1$-Lipschitz functions.  For $g,h\in\mathscr L(J,A)$, let
$r_0(g,h)$ be the least $r\ge0$ such that
\begin{equation}\label{eq:euclidean-profile-edge}
 \lVert g(u)-h(v)\rVert_\infty\le\sqrt{(u-v)^2+r^2}
\end{equation}
for all $u,v\in J$.  This minimum exists because the left-hand side is bounded and $J^2$ is compact.  Define
\begin{equation}\label{eq:euclidean-profile-path}
 d_{J,A}(g,h) \coloneqq
 \min\left\{2A, \inf_{g=g_0,\ldots,g_m=h}\sum_{\ell=1}^m r_0(g_{\ell-1},g_\ell)\right\}.
\end{equation}
Then $d_{J,A}$ induces the uniform topology, and $\mathscr L(J,A)$ is compact for this metric.  Moreover, the evaluation
map $(u,g)\mapsto g(u)$ is $1$-Lipschitz for the Euclidean product distance; equivalently,
\begin{equation}\label{eq:euclidean-profile-evaluation}
 \lVert g(u)-h(v)\rVert_\infty \le\sqrt{(u-v)^2+d_{J,A}(g,h)^2}.
\end{equation}
\end{lemma}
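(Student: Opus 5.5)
The plan is to reduce everything to two elementary bounds on the one-step quantity $r_0$: it dominates the uniform distance, and it is small when the uniform distance is small. Then the only genuinely geometric point, the evaluation inequality \eqref{eq:euclidean-profile-evaluation} along chains, is handled by a straight-line interpolation argument.

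\emph{Properties of $r_0$.} The set of admissible $r$ in \eqref{eq:euclidean-profile-edge} is closed and upward closed, which gives existence of the minimum.
\begin{itemize}
\item Symmetry: interchanging $u$ and $v$ shows $r_0(g,h)=r_0(h,g)$.
\item Diagonal: $r_0(g,g)=0$, because the coordinates of $g$ are $1$-Lipschitz, so $\lVert g(u)-g(v)\rVert_\infty\le|u-v|$.
\item Lower bound: taking $u=v$ gives $r_0(g,h)\ge\lVert g-h\rVert_\infty$.
\item Upper bound: put $\delta\coloneqq\lVert g-h\rVert_\infty$. Then $\lVert g(u)-h(v)\rVert_\infty\le|u-v|+\delta$, and $(|u-v|+\delta)^2\le(u-v)^2+2|J|\delta+\delta^2$. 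Hence $r_0(g,h)\le\sqrt{2|J|\delta+\delta^2}$.
\end{itemize}

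\emph{Chain evaluation inequality (main step).} Take a chain $g=g_0,\dots,g_m=h$ with $r_\ell\coloneqq r_0(g_{\ell-1},g_\ell)$ and $S\coloneqq\sum_\ell r_\ell>0$. Fix $u,v\in J$ and set $u_\ell\coloneqq u+(v-u)S^{-1}\sum_{i\le\ell}r_i$. These points lie in $J$ by convexity, with $u_0=u$ and $u_m=v$. The vectors $(u_\ell-u_{\ell-1},r_\ell)$ are all positive multiples of $(v-u,S)$, so
\[
\sum_\ell\sqrt{(u_\ell-u_{\ell-1})^2+r_\ell^2}=\sqrt{(u-v)^2+S^2}.
\]
The triangle inequality in $\lVert\cdot\rVert_\infty$ then gives
\[
\lVert g(u)-h(v)\rVert_\infty\le\sum_\ell\lVert g_{\ell-1}(u_{\ell-1})-g_\ell(u_\ell)\rVert_\infty\le\sqrt{(u-v)^2+S^2}.
\]
If $S=0$, all consecutive profiles coincide and the bound is just the $1$-Lipschitz bound. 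Taking the infimum over chains gives the inequality with the path infimum in place of $d_{J,A}$. When the minimum in \eqref{eq:euclidean-profile-path} is instead $2A$, the inequality holds trivially, since $\lVert g(u)-h(v)\rVert_\infty\le2A$. This proves \eqref{eq:euclidean-profile-evaluation}.

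\emph{Metric, topology and compactness.}
\begin{itemize}
\item $d_{J,A}$ is symmetric by symmetry of $r_0$.
\item The path infimum satisfies the triangle inequality by concatenating chains, and truncation by the constant $2A$ preserves it.
\item Setting $u=v$ in \eqref{eq:euclidean-profile-evaluation} gives $d_{J,A}(g,h)\ge\lVert g-h\rVert_\infty$, which yields definiteness.
\item The one-step chain and the upper bound on $r_0$ give $d_{J,A}(g,h)\le\sqrt{2|J|\delta+\delta^2}$. Together with the previous item, $d_{J,A}$ and the uniform metric have the same convergent sequences and the same topology.
\item $\mathscr L(J,A)$ is uniformly bounded, equicontinuous, and closed under uniform limits. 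By Arzel\`a--Ascoli it is uniformly compact, hence compact for $d_{J,A}$.
\end{itemize}

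I expect the interpolation step to be the only delicate point. It hinges on noticing that $\sqrt{(u-v)^2+r^2}$ is a Euclidean length in the $(u,r)$-plane, so that splitting the $u$-displacement proportionally to the $r_\ell$ makes the chain sum exact rather than merely subadditive.
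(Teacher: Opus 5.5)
Your proposal is correct and follows the paper's proof essentially step for step: the lower bound $\lVert g-h\rVert_\infty\le d_{J,A}(g,h)$ from evaluating at a common parameter, the upper bound $\sqrt{2\operatorname{diam}(J)\delta+\delta^2}$ via a one-step chain, Arzel\`a--Ascoli for compactness, and the chain evaluation inequality. The paper proves the latter by intersecting the straight segment from $(0,u)$ to $(S,v)$ with the vertical lines at the cumulative edge lengths, which is exactly your proportional splitting. The only differences are cosmetic. You obtain definiteness from the evaluation inequality rather than directly from the chain sum. You should also note that $r=2A$ is admissible, so the set of admissible $r$ is nonempty and the minimum exists.
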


\begin{proof}
The raw path construction is a symmetric pseudometric and satisfies the triangle inequality.  Evaluation at a common parameter value in \eqref{eq:euclidean-profile-edge} gives
\[ \lVert g-h\rVert_\infty\le d_{J,A}(g,h) \]
whenever the right-hand side is below the cap, and the capped pseudometric therefore separates points.  Conversely, if $\lVert g-h\rVert_\infty\le\eps$, then
\[ \lVert g(u)-h(v)\rVert_\infty\le|u-v|+\eps
 \le\sqrt{(u-v)^2+2\operatorname{diam}(J)\eps+\eps^2}. \]
Thus $d_{J,A}(g,h)\le\sqrt{2\operatorname{diam}(J)\eps+\eps^2}$.  The two estimates prove equivalence with the uniform
topology.  Arzel\`a--Ascoli compactness gives compactness.

If $d_{J,A}(g,h)$ equals the cap, \eqref{eq:euclidean-profile-evaluation} follows from the range bound.  Otherwise, join
$(0,u)$ to $(\sum r_0,v)$ by a straight segment in the Euclidean plane and intersect it with the vertical lines at the
cumulative edge lengths of a nearly minimizing chain.  Since $J$ is convex, the intermediate vertical coordinates remain
in $J$.  Applying \eqref{eq:euclidean-profile-edge} along the chain and adding the Euclidean segment lengths proves
\eqref{eq:euclidean-profile-evaluation}.  This completes the proof.
\end{proof}

\subsection{The large-beta kernel}

Uniform comparison on radial windows of asymptotically full mass transfers the large-beta Funk geometry to a product model.

\begin{lemma}\label{lem:large-beta-kernel-comparison}
Put
\[ W_n\coloneqq c_n(\mathbf R_n-\bar\rho_n), \qquad I_n\coloneqq(-c_n\bar\rho_n,+\infty), \]
and let $\nu_n$ be the pushforward distribution of $W_n$.  In \eqref{eq:transformed-sphere-metric}, take $a_n=c_n$.  Define
\begin{align}
 D_n^{\mathrm{cmp}}((u,z),(v,z'))
 &\coloneqq\sqrt{(u-v)^2+\delta_n(z,z')^2}+q_n(v-u),
 \label{eq:large-beta-comparison-kernel}\\
\Psi_n(u,z) &\coloneqq\tanh(\bar\rho_n+u/c_n)\frac z{\sqrt n}. \label{eq:large-beta-comparison-map}
\end{align}
For every $R>0$, there are numbers $M_n\to+\infty$ such that
\begin{equation}\label{eq:large-beta-window-mass} \nu_n([-M_n,M_n])\longrightarrow1 \end{equation}
and
\begin{equation}\label{eq:large-beta-truncated-kernel}
 \begin{multlined}
 \sup_{\substack{u,v\in I_n,\ |u|,|v|\le M_n\\
 z,z'\in\sqrt nS^{n-1}}}
 \Bigl|\min\{c_n\dF(\Psi_n(u,z),\Psi_n(v,z')),2R\}\\
 {}-\min\{D_n^{\mathrm{cmp}}((u,z),(v,z')),2R\}\Bigr|
 \longrightarrow0.
 \end{multlined}
\end{equation}
The map $\Psi_n$ is measure preserving from
\[ (I_n\times\sqrt nS^{n-1},\nu_n\otimes\sigma_{n-1}) \]
to $(\B^n,\mu_{n,\beta_n})$.
\end{lemma}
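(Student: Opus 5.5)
Measure preservation comes first because it is immediate.  By \Cref{thm:gamma-master}, $\mathbf X_n=\tanh\mathbf R_n\,\Theta_n$ with $\mathbf R_n$ independent of $\Theta_n$.  Moreover, $\mathbf R_n=\bar\rho_n+W_n/c_n$, and $W_n$ takes values in $I_n$ because $\mathbf R_n>0$ almost surely.  Hence $\Psi_n(W_n,\sqrt n\Theta_n)=\mathbf X_n$, and $\Psi_n$ pushes $\nu_n\otimes\sigma_{n-1}$ to $\mu_{n,\beta_n}$.  For the window, \Cref{thm:radial-clt} shows that $(\nu_n)$ is tight.  I then choose $M_n\to\infty$ slowly, and in any case with $M_n/c_n\to0$ and $M_n^3/c_n\to0$, so that \eqref{eq:large-beta-window-mass} holds.

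For the kernel I apply the master formula \eqref{eq:master-Funk} with $\rho=\bar\rho_n+u/c_n$, $\sigma=\bar\rho_n+v/c_n$ and $|\theta-\eta|=|z-z'|/\sqrt n$.  The argument then splits into the potential increment and the Klein term.

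\textbf{Potential increment.}  By the mean value theorem, $c_n(\log\cosh\sigma-\log\cosh\rho)=(v-u)\tanh\xi$ for some $\xi$ between $\rho$ and $\sigma$.  Since $|\xi-\bar\rho_n|\le M_n/c_n$ and $\tanh$ is $1$-Lipschitz, this differs from $q_n(v-u)$ by at most $2M_n^2/c_n\to0$.

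\textbf{Klein term.}  Write $\cosh\dK=\cosh(\rho-\sigma)+2\sinh\rho\sinh\sigma\,\sinh^2(\delta/2)$, where I define $\delta$ through $\sinh(\delta/2)=\frac{|\theta-\eta|}{2}\cdot\frac{\sqrt{\sinh\rho\sinh\sigma}}{\sinh\bar\rho_n}\sinh\bar\rho_n$.  Since $\sinh\bar\rho_n=\sqrt{n/(2\beta_n)}=\sqrt n/c_n$, this gives $c_n\cdot 2\arsinh\bigl(\sinh\bar\rho_n|\theta-\eta|/2\bigr)=2c_n\arsinh\bigl(|z-z'|/(2c_n)\bigr)=\delta_n(z,z')$ exactly, up to the factor $\sqrt{\sinh\rho\sinh\sigma}/\sinh\bar\rho_n$.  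That factor is $1+O(M_n/(c_n\tanh\bar\rho_n))$.  The denominator $\tanh\bar\rho_n$ may be small when $q_n\to0$, so I instead use $\sinh(\bar\rho+\epsilon)/\sinh\bar\rho\le e^{\epsilon}\coth$-type bounds.  This is the delicate point.

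I would handle it through the identity $\cosh\dK=\cosh(a)+2S\sinh^2(b/2)$ in a hyperbolic-plane comparison.  The truncation at $2R$ is what saves us.  Only pairs with $D^{\mathrm{cmp}}\lesssim 2R$ matter, and this bounds $\delta_n(z,z')$ by a multiple of $R+M_n$, hence bounds $c_n\dK$.  The quantity $c_n\dK$ is a scaled hyperbolic distance between points at distance $O(1/c_n)$.  By the hyperbolic law of cosines in the plane through the origin, $c_n\dK$ then equals $\sqrt{(u-v)^2+\delta'^2}+o(1)$, where $\delta'$ is the scaled angular separation measured at radius $\bar\rho_n$.  Here I use that small hyperbolic triangles are Euclidean up to relative error $O(\text{diameter}^2)=O(M_n^2/c_n^2)$.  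The replacement of $\sinh\rho\sinh\sigma$ by $\sinh^2\bar\rho_n$ contributes a relative error $e^{O(M_n/c_n)}-1$ in $\delta'$.  Multiplied by $\delta'=O(R+M_n)$, this is $O(M_n^2/c_n)\to0$.

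For pairs where one of the two distances exceeds $2R+1$, the same estimates with monotonicity in $|z-z'|$ show that both distances exceed $2R$.  The truncated difference then vanishes.  I expect the main obstacle to be exactly this uniformity in the regime $q_n\to0$, where $\bar\rho_n\to0$ and $I_n$ is not the whole line.  I will use the factorization $\sinh\rho=\sinh\bar\rho_n\cdot\sinh\rho/\sinh\bar\rho_n$ with the crude bound $\sinh(\bar\rho+\epsilon)/\sinh\bar\rho\le e^{|\epsilon|}(1+|\epsilon|/\bar\rho)$.  If that bound fails for $\bar\rho\ll M_n/c_n$, I will shrink $M_n$ further, or treat that regime directly as a Euclidean comparison near the origin, where $\Psi_n$ is nearly an isometric scaling of the tangent space at $0$.
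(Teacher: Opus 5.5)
Your ingredients are the paper's: the exact identity
\[
\cosh\dK\bigl(\Psi_n(u,z),\Psi_n(v,z')\bigr)=\cosh\frac{u-v}{c_n}+Q_n\Bigl(\cosh\frac{\delta_n(z,z')}{c_n}-1\Bigr),
\qquad
Q_n\coloneqq\frac{\sinh(\bar\rho_n+u/c_n)\sinh(\bar\rho_n+v/c_n)}{\sinh^2\bar\rho_n},
\]
which your computation $\sinh\bar\rho_n=\sqrt n/c_n$ essentially produces. The other ingredients are the mean-value bound for the potential, restriction to $\delta_n=O(R+M_n)$ via the truncation, and tightness from \Cref{thm:radial-clt}. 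Working directly with growing $M_n$, instead of a fixed window followed by a diagonal choice, is immaterial.

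The proof is not closed at the step you yourself call delicate. Your claim that replacing $\sinh\rho\sinh\sigma$ by $\sinh^2\bar\rho_n$ costs a relative error $e^{O(M_n/c_n)}-1$ holds only when $\bar\rho_n$ is bounded below. In general $|\log Q_n|\le\frac{|u|+|v|}{c_n}\sup\coth$ over the radial window, which is $O\bigl(M_n/(c_n\bar\rho_n)\bigr)$ once $M_n/c_n\le\bar\rho_n/2$, and you never show that $c_n\bar\rho_n$ is large. The missing observation is
\[
(c_n\bar\rho_n)^2\ge(c_nq_n)^2=\frac{2\beta_nn}{n+2\beta_n}=\Bigl(\frac1{2\beta_n}+\frac1n\Bigr)^{-1}\longrightarrow+\infty .
\]
It holds for every ratio $\beta_n/n$ because both $n$ and $\beta_n$ diverge, and it is exactly what the paper uses to get $Q_n\to1$ uniformly. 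With it, $|\log Q_n|=O(M_n/(c_nq_n))$, and the Klein error on $\delta_n=O(R+M_n)$ is $O\bigl(M_n^2/(c_nq_n)+M_n^3/c_n^2\bigr)$. Taking $M_n\to\infty$ with the additional requirement $M_n^2=o(c_nq_n)$ then completes your argument.

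Your other fallback, a ``Euclidean comparison near the origin'' when $\bar\rho_n\ll M_n/c_n$, cannot work and should be dropped. If $c_n\bar\rho_n$ stayed bounded, then $\sinh(\bar\rho_n+u/c_n)/\sinh\bar\rho_n\approx1+u/(c_n\bar\rho_n)$ would not tend to one even on a window of fixed size. The scaled Klein distance would then not approach $\sqrt{(u-v)^2+\delta_n^2}$, and the comparison with $D_n^{\mathrm{cmp}}$ asserted in the lemma would be false. The lemma holds only because that regime is excluded.

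Likewise, the appeal to ``small hyperbolic triangles'' is vague, since you do not specify the triangle, and it is unnecessary. Set $a=(u-v)/c_n$ and $b=\delta_n/c_n$. With $Q_n=1$, the identity above reads $\cosh d=\cosh a+\cosh b-1$, and its Taylor expansion gives $d^2=a^2+b^2+O(a^2b^2)$ directly. Meanwhile $\cosh\dK-\cosh d=(Q_n-1)(\cosh b-1)$ gives $c_n|\dK-d|=O(|Q_n-1|\,\delta_n)$.
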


\begin{proof}
We first work on a fixed radial window $|u|,|v|\le M$.  For $z=\sqrt n\theta$ and $z'=\sqrt n\eta$, the master formula gives
\begin{equation}\label{eq:large-beta-exact-cosh}
 \begin{split}
 &\cosh\dK(\Psi_n(u,z),\Psi_n(v,z'))\\
 &\quad=\cosh\frac{u-v}{c_n}
 +\frac{\sinh(\bar\rho_n+u/c_n)
 \sinh(\bar\rho_n+v/c_n)}{\sinh^2\bar\rho_n}
 \left(\cosh\frac{\delta_n(z,z')}{c_n}-1\right).
 \end{split}
\end{equation}
Since
\[ (c_nq_n)^2=\frac{2\beta_n n}{n+2\beta_n}\longrightarrow+\infty, \]
integration of the logarithmic derivative
$\cosh(\bar\rho_n+t/c_n)/(c_n\sinh(\bar\rho_n+t/c_n))$ shows that the quotient of hyperbolic sines in \eqref{eq:large-beta-exact-cosh} tends uniformly to one on the fixed window.  For every fixed $L$, Taylor's theorem applied to $\cosh$ and $\arsinh$ now gives, uniformly when $\delta_n(z,z')\le L$,
\begin{equation}\label{eq:large-beta-symmetric-uniform} c_n\dK(\Psi_n(u,z),\Psi_n(v,z')) -\sqrt{(u-v)^2+\delta_n(z,z')^2} \longrightarrow0. \end{equation}
The endpoint potential satisfies the explicit bound
\begin{equation}\label{eq:large-beta-potential-uniform} \left|c_n\left[ \phi(\bar\rho_n+v/c_n)-\phi(\bar\rho_n+u/c_n) \right]-q_n(v-u)\right| \le\frac{u^2+v^2}{2c_n}, \end{equation}
because $\phi'(\rho)=\tanh\rho$ and $|\phi''|\le1$.  Thus the directed kernels converge uniformly when the radial variables and $\delta_n$ are bounded.

Only bounded angular distances affect the truncated kernels.  The hyperbolic triangle inequality and the $1$-Lipschitz property of $\phi$ as a function of radius give
\begin{align*}
c_n\dF(\Psi_n(u,z),\Psi_n(v,z')) &\ge\delta_n(z,z')-4M,\\
D_n^{\mathrm{cmp}}((u,z),(v,z')) &\ge\delta_n(z,z')-2M.
\end{align*}
Both truncated kernels therefore equal $2R$ outside a fixed bounded angular range.  Equations \eqref{eq:large-beta-symmetric-uniform} and \eqref{eq:large-beta-potential-uniform} prove the fixed-window comparison.

The measures $\nu_n$ are tight by \Cref{thm:radial-clt}.  A diagonal choice of $M_n\to+\infty$ proves \eqref{eq:large-beta-window-mass} and \eqref{eq:large-beta-truncated-kernel}.  The map $\Psi_n$ is measure preserving by \eqref{eq:gamma-realization}.  This completes the proof.
\end{proof}

\subsection{Horocone profiles}

A path metric on bounded vertical profiles converts the horocone inequalities into a single Lipschitz evaluation estimate.

\begin{theorem}\label{thm:profile-path-metric}
Fix $N\ge1$, an output range $R>0$, and a compact height interval $J=[a,b]\Subset(0,+\infty)$.  Choose $H>0$ such that
\begin{equation}\label{eq:H-output-cutoff} \Omega_{s,t}(H)\ge2R \qquad(s,t\in J), \end{equation}
and choose $J^*=[a,B]$ to contain every height met by a hyperbolic geodesic joining $(0,s)$ to $(r,t)$ when $s,t\in J$ and $0\le r\le H$.

Let $\mathscr K$ be the set of maps $g\colon J^*\to[-R,R]^N$ satisfying
\begin{equation}\label{eq:vertical-profile-constraint} d_N^+(g(s),g(t))\le\Omega_{s,t}(0) \qquad(s,t\in J^*). \end{equation}
For $g,h\in\mathscr K$, let $r_0(g,h)$ be the least $r\ge0$ such that both
\begin{align*}
 d_N^+(g(s),h(t))&\le\Omega_{s,t}(r),\\
 d_N^+(h(s),g(t))&\le\Omega_{s,t}(r)
\end{align*}
for all $s,t\in J^*$.  Define
\begin{equation}\label{eq:profile-path-metric} d_{\mathscr K}(g,h) \coloneqq\min\left\{H, \inf_{g=g_0,\ldots,g_m=h} \sum_{\ell=1}^mr_0(g_{\ell-1},g_\ell)\right\}. \end{equation}
Then $d_{\mathscr K}$ is a compact metric whose topology is uniform convergence.  For all $g,h\in\mathscr K$ and $s,t\in J$,
\begin{equation}\label{eq:evaluation-cone-inequality} d_N^+(g(s),h(t)) \le\Omega_{s,t}(d_{\mathscr K}(g,h)). \end{equation}
\end{theorem}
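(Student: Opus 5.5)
The plan is to model the argument on \Cref{lem:euclidean-profile-metric}, with the Euclidean cone $\sqrt{(u-v)^2+r^2}$ replaced by the upper-half-plane kernel $\Omega_{s,t}(r)$. The raw path construction is symmetric in $g,h$ because $r_0$ imposes both orderings. Concatenating chains gives the triangle inequality, so $d_{\mathscr K}$ is a symmetric pseudometric, and the cap $H$ keeps it finite. For separation and the uniform-topology comparison, I will use that $\Omega_{s,s}(r)=\arcosh(1+r^2/(2s^2))\le r/a$ on $J^*$.

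\textbf{Step 1 (equivalence with the uniform topology).} Suppose $d_{\mathscr K}(g,h)<H$. I will show $\lVert g-h\rVert_\infty\le d_{\mathscr K}(g,h)/a$ by evaluating \eqref{eq:evaluation-cone-inequality}, extended to $J^*$, at $s=t$. Positivity then gives separation. Conversely, suppose $\lVert g-h\rVert_\infty\le\eps$. The vertical constraint \eqref{eq:vertical-profile-constraint} gives $d_N^+(g(s),h(t))\le\Omega_{s,t}(0)+\eps$. The quantity $\Omega_{s,t}(r)-\Omega_{s,t}(0)$ is bounded below by a positive continuous function of $r$, uniformly for $s,t\in J^*$. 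This holds because $\Omega_{s,t}(r)=\arcosh(1+\frac{r^2+(s-t)^2}{2st})+\log(s/t)$ is strictly increasing in $r$ and $J^*$ is compact. Hence $r_0(g,h)\to0$ as $\eps\to0$. Compactness then follows from Arzel\`a--Ascoli. Elements of $\mathscr K$ are uniformly bounded, and \eqref{eq:vertical-profile-constraint} together with its reverse $d_N^+(g(t),g(s))\le\Omega_{t,s}(0)$ makes them equi-Lipschitz on $J^*$, since $\Omega_{s,t}(0)=2(\log(t/s))_+\le 2|s-t|/a$. The constraint is closed under uniform limits, so $\mathscr K$ is closed.

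\textbf{Step 2 (evaluation inequality).} This is the main obstacle. If $d_{\mathscr K}(g,h)=H$, then \eqref{eq:evaluation-cone-inequality} follows from \eqref{eq:H-output-cutoff} and the range bound $d_N^+\le2R$. Otherwise, take a chain $g_0,\dots,g_m$ with $\sum r_\ell\le d+\eps<H$. In the hyperbolic upper half-plane, consider the geodesic from $(0,s)$ to $(\sum r_\ell,t)$. Intersect it with the vertical lines at the cumulative abscissae $x_\ell=\sum_{i\le\ell}r_i$, which gives intermediate heights $y_\ell$. The choice of $J^*$ guarantees $y_\ell\in J^*$, so each edge inequality $d_N^+(g_{\ell-1}(y_{\ell-1}),g_\ell(y_\ell))\le\Omega_{y_{\ell-1},y_\ell}(r_\ell)$ applies. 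The key structural point is that $\Omega_{y,y'}(r)$ equals the hyperbolic distance between $(0,y)$ and $(r,y')$ plus the Busemann increment $\log(y/y')$. Summing along the chain, the increments telescope to $\log(s/t)$. The hyperbolic pieces lie on one geodesic, so they sum to the total geodesic length $\arcosh(1+\frac{(\sum r_\ell)^2+(s-t)^2}{2st})$. The left sides telescope through the triangle inequality for $d_N^+$. This gives $d_N^+(g(s),h(t))\le\Omega_{s,t}(d+\eps)$, and letting $\eps\to0$ uses continuity of $\Omega$ in $r$.

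The delicate checks are twofold. First, the geodesic must actually meet each vertical line at a single height inside $J^*$; this holds because the geodesic is a graph over the interval of abscissae. Second, the cap $H$ is what keeps the argument inside the range where the choice of $J^*$ applies. Finally, Step 2 applied at $s=t\in J^*$ supplies the lower bound used in Step 1; that case needs the same geodesic argument with both endpoints in $J^*$, so I will state Step 2 for $s,t\in J$ and use a separate direct estimate, $r_0(g,h)\ge a\lVert g-h\rVert_\infty$ up to $\arcosh$ slack, chained along the path, for the uniform bound.
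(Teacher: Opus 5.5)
Your proposal is correct and follows the paper's proof essentially step by step. The ingredients match: the per-edge bound $\lVert g_{\ell-1}-g_\ell\rVert_\infty\le2\arsinh(r_\ell/(2a))\le r_\ell/a$ summed along chains, which you rightly adopt in place of evaluating the cone inequality on $J^*$; a modulus for $r_0$ in terms of the uniform distance, which the paper makes explicit as $r_0\le C\sqrt{\delta}$ via the cosh formula where you use compactness; Arzel\`a--Ascoli; and the geodesic-chain argument with telescoping logarithmic increments. One harmless slip: $\Omega_{s,t}(0)=2(\log(s/t))_+$, not $2(\log(t/s))_+$, but since you use both orderings the equi-Lipschitz bound is unaffected.
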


\begin{proof}
The raw path construction is a symmetric pseudometric, and capping at $H$ preserves the triangle inequality.  If a chain has edge lengths $r_\ell$, then evaluation at one height gives
\[ \lVert g_{\ell-1}(s)-g_\ell(s)\rVert_\infty \le2\arsinh\frac{r_\ell}{2s} \le\frac{r_\ell}{a}. \]
Summing along the chain proves that zero path distance forces equality.  Conversely, if $\lVert g-h\rVert_\infty\le\delta$, then
\[ d_N^+(g(s),h(t))\le\Omega_{s,t}(0)+\delta. \]
The cosh formula on the compact set $J^*\times J^*$ gives a constant $C>0$, independent of $g,h$, for which the right-hand side is at most $\Omega_{s,t}(C\sqrt\delta)$.

More explicitly, after increasing $C$ if necessary, for $s,t\in J^*$ and $0\le\delta\le2R$ we have
\[ \cosh\left(\Omega_{s,t}(0)-\log\frac{s}{t}+\delta\right) \le\frac{s^2+t^2+C^2\delta}{2st} =\cosh\left(\Omega_{s,t}(C\sqrt\delta)-\log\frac{s}{t}\right). \]
Both arguments of $\cosh$ are nonnegative.  Its monotonicity therefore gives
\[ \Omega_{s,t}(C\sqrt\delta)\ge\Omega_{s,t}(0)+\delta. \]
The path metric and uniform metric therefore induce the same topology.  Compactness follows from equicontinuity in $\log y$ and the Arzel\`a--Ascoli theorem.

If $d_{\mathscr K}(g,h)=H$, \eqref{eq:evaluation-cone-inequality} follows from \eqref{eq:H-output-cutoff}.  Otherwise, take a chain of total edge length below $H$.  Join $(0,s)$ to the endpoint with horizontal coordinate equal to this total length and height $t$ by a hyperbolic geodesic.  Intersect it with vertical lines at the cumulative edge lengths.  Write $s_0=s,s_m=t$ for the endpoint heights and $s_1,\ldots,s_{m-1}$ for the intermediate heights, which lie in $J^*$ by construction.  The directed edge inequalities add along the geodesic, while their logarithmic endpoint increments telescope:
\[
 \begin{aligned}
d_N^+(g_0(s),g_m(t)) &\le\sum_{\ell=1}^m\Omega_{s_{\ell-1},s_\ell}(r_\ell)\\
 &=h_{\R}\left((0,s),\left(\sum_{\ell=1}^m r_\ell,t\right)\right)+\log\frac{s}{t}
 =\Omega_{s,t}\left(\sum_{\ell=1}^m r_\ell\right).
 \end{aligned}
\]
Taking the infimum over chains proves \eqref{eq:evaluation-cone-inequality}.  This completes the proof.
\end{proof}

\subsection{Interval exhaustion}

Weak convergence on compact height intervals passes to evaluation submeasures and then to the full height space by exhaustion.

\begin{lemma}\label{lem:compact-profile-passage}
Let $\mathscr K$ be a compact metric space, let $\varpi_n\to\varpi$ weakly on $\mathscr K$, and let $\xi_n\to\xi$ weakly on $(0,+\infty)$.  Suppose that $\mu_n\to\mu$ weakly on $[-R,R]^N$.  Let $J\Subset(0,+\infty)$ have endpoints of zero $\xi$-measure, and let $E\colon\mathscr K\times J\to[-R,R]^N$ be continuous.  If
\[ E_*(\varpi_n\otimes(\xi_n|_J))\le\mu_n \]
for every $n$, then
\[ E_*(\varpi\otimes(\xi|_J))\le\mu. \]
Suppose also that $\mathscr M$ is a compact set of probability measures on $[-R,R]^N$ and that, for every interval in a nested continuity-point exhaustion of $(0,+\infty)$, there is $\rho_J\in\mathscr M$ such that
the Prokhorov distance between $\rho_J$ and $\mu$ is at most
$2\xi((0,+\infty)\setminus J)$.
Then $\mu\in\mathscr M$.
\end{lemma}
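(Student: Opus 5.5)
The first assertion is a weak-convergence statement for restricted product measures, and the second is a closedness argument. For the first part I will prove that $E_*(\varpi_n\otimes(\xi_n|_J))\Rightarrow E_*(\varpi\otimes(\xi|_J))$ as finite measures on $[-R,R]^N$.

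Since $\partial J$ is $\xi$-null and $\xi_n\Rightarrow\xi$ on $(0,+\infty)$, the portmanteau theorem gives $\xi_n|_J\Rightarrow\xi|_J$ as finite measures on the compact interval $J$. In particular the total masses $\xi_n(J)$ converge to $\xi(J)$. Products of weakly convergent finite measures on compact metric spaces converge weakly. To see this, test against functions of the form $\varphi(g)\psi(y)$; their linear span is uniformly dense in $C(\mathscr K\times J)$ by Stone--Weierstrass, and the masses are uniformly bounded. Hence $\varpi_n\otimes(\xi_n|_J)\Rightarrow\varpi\otimes(\xi|_J)$. Because $E$ is continuous, the continuous mapping theorem transfers this convergence to the pushforwards.

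The submeasure inequality then passes to the limit. Take any continuous $f\ge0$ on $[-R,R]^N$. The hypothesis gives $\int f\,dE_*(\varpi_n\otimes\xi_n|_J)\le\int f\,d\mu_n$. Letting $n\to\infty$ on both sides yields the same inequality for the limits. Nonnegative continuous functions determine the order of finite Borel measures on a compact metric space: approximate indicators of closed sets from above, then use regularity. This gives $E_*(\varpi\otimes(\xi|_J))\le\mu$. The only delicate point in this part is the boundary-null condition, which is exactly what is needed for the restricted heights to converge.

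For the exhaustion clause, let $J_\ell$ be the nested continuity-point intervals and $\rho_{J_\ell}\in\mathscr M$ the given measures. Since $J_\ell\uparrow(0,+\infty)$ and $\xi$ is a probability measure on $(0,+\infty)$, we have $\xi((0,+\infty)\setminus J_\ell)\to0$. Therefore $d_{\mathrm P}(\rho_{J_\ell},\mu)\to0$. The set $\mathscr M$ is compact, hence closed in the Prokhorov metric, so the limit $\mu$ belongs to $\mathscr M$.

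I expect no serious obstacle beyond the careful restriction argument in the first part.
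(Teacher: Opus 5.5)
Your proposal is correct and follows the paper's proof step for step. The $\xi$-null endpoints give $\xi_n|_J\Rightarrow\xi|_J$, and hence the product measures converge. The submeasure inequality then passes to the limit against nonnegative continuous test functions, and the vanishing Prokhorov bound along the exhaustion puts $\mu$ in the closed set $\mathscr M$. You only make explicit the product-convergence (Stone--Weierstrass) and measure-order (regularity) details that the paper leaves implicit.
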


\begin{proof}
The endpoint assumption gives $\xi_n|_J\to\xi|_J$ weakly as finite measures.  Therefore,
\[ \varpi_n\otimes(\xi_n|_J) \longrightarrow\varpi\otimes(\xi|_J). \]
For every nonnegative continuous function $\varphi$ on $[-R,R]^N$, pass to the limit in
\[ \int\varphi\circ E\,d\bigl(\varpi_n\otimes(\xi_n|_J)\bigr) \le\int\varphi\,d\mu_n \]
to obtain the submeasure relation.  Along the nested exhaustion, the displayed Prokhorov bound tends to zero.  Compactness of $\mathscr M$ then gives $\mu\in\mathscr M$.
	This completes the proof.
\end{proof}

\subsection{Spherical blocks and proximity}

Two complementary spherical estimates supply separated angular blocks of prescribed masses and nearby representatives of positive-measure sets.

\begin{theorem}\label{thm:spherical-blocks-nearby}
The following assertions hold for $(S^{n-1},\sigma_{n-1})$.
\begin{enumerate}[label=\textup{(\alph*)}]
\item Let $m\ge1$, let $p_i>0$ with $\sum_i p_i=1$, and let $\eps>0$.  There are $c>0$ and, for all sufficiently large $n$, Borel sets $B_{n,1},\ldots,B_{n,m}$ such that
\begin{align}
|\sigma_{n-1}(B_{n,i})-p_i|&<\eps, \label{eq:spherical-block-masses}\\
 \sqrt n|\theta-\eta|&\ge c
 \quad(\theta\in B_{n,i},\ \eta\in B_{n,j},\ i\ne j).
 \label{eq:spherical-block-separation}
\end{align}
The omitted mass tends to zero as $\eps\downarrow0$.
\item For every $\alpha\in(0,1)$, there is $C_\alpha<+\infty$ such that, for all sufficiently large $n$, any Borel sets $A_n,B_n\subset S^{n-1}$ of measures at least $\alpha$ contain $\theta_n\in A_n$ and $\eta_n\in B_n$ satisfying
\begin{equation}\label{eq:spherical-nearby-points} \sqrt n|\theta_n-\eta_n|\le C_\alpha. \end{equation}
\end{enumerate}
\end{theorem}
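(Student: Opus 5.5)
For part (a), I will use the Gaussian approximation of finitely many angular coordinates. The pushforward of $\sigma_{n-1}$ under $\theta\mapsto\sqrt n\,\theta_1$ converges weakly to $N(0,1)$. Choose thresholds $-\infty=\tau_0<\tau_1<\cdots<\tau_m=+\infty$ with $\gamma^1((\tau_{i-1},\tau_i])=p_i$. Given $\eps>0$, pick a gap $c>0$ small enough that the Gaussian mass of $\bigcup_i[\tau_i-c,\tau_i+c]$ is below $\eps/(2m)$. Then set
\[
B_{n,i}\coloneqq\{\theta\colon \tau_{i-1}+c<\sqrt n\,\theta_1\le\tau_i-c\}.
\]
Weak convergence and the fact that the threshold points are continuity points give $|\sigma_{n-1}(B_{n,i})-p_i|<\eps$ for large $n$. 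Separation follows from the first coordinate alone. If $\theta\in B_{n,i}$ and $\eta\in B_{n,j}$ with $i\ne j$, then
\[
\sqrt n|\theta-\eta|\ge\sqrt n|\theta_1-\eta_1|\ge2c.
\]
The omitted mass is at most the Gaussian mass of the gaps plus $o(1)$, so it tends to zero as $\eps\downarrow0$, because $c$ can be shrunk together with $\eps$.

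For part (b), I will use the concentration of measure on the sphere, in its blow-up form. This is Lévy's isoperimetric inequality: if $\sigma_{n-1}(A)\ge\alpha$, then the $r$-neighbourhood $A_r$ satisfies
\[
\sigma_{n-1}(A_r)\ge1-C e^{-c(n-1)r^2/2}
\]
(at least once $r$ exceeds the median-to-$\alpha$ shift, which is $O(1/\sqrt n)$). Take $r=C_\alpha/\sqrt n$ with $C_\alpha$ chosen so that $1-\sigma_{n-1}(A_r)<\alpha$ for all large $n$. Concretely, the Lévy comparison with a spherical cap reduces this to the Gaussian statement $\gamma^1(\text{half-line of mass }\alpha\text{ enlarged by }C_\alpha)>1-\alpha$ in the limit.

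Then $A_r\cap B_n$ has positive measure, hence is nonempty. Any $\eta_n\in A_r\cap B_n$ has some $\theta_n\in A_n$ with $\sqrt n|\theta_n-\eta_n|\le C_\alpha$. The distance here is geodesic, but the chord distance is smaller, so the bound carries over; if the neighbourhood is closed rather than open, I replace $C_\alpha$ by $C_\alpha+1$.

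The main obstacle is making the constant in (b) uniform in $n$ and explicit from the isoperimetric comparison. I would first try to prove it directly: compare $A_n$ with a cap $\{\sqrt n\,\theta_1\le t_n\}$ of the same measure. Here $t_n$ converges to the Gaussian $\alpha$-quantile, and enlarging the cap by $C/\sqrt n$ in geodesic distance shifts $\sqrt n\,\theta_1$ by roughly $C$ near the equatorial region. The enlarged cap mass then converges to $\Phi(\Phi^{-1}(\alpha)+C)$. Choosing $C_\alpha$ with $\Phi(\Phi^{-1}(\alpha)+C_\alpha)>1-\alpha/2$ will suffice.
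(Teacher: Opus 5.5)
Your proposal is correct and matches the paper's argument: in (a) you cut $\sqrt n\,\theta_1$ at Gaussian quantiles with small deleted gaps, and in (b) you use L\'evy's isoperimetric comparison with a cap in $\sqrt n\,\theta_1$, whose geodesic $C/\sqrt n$-enlargement shifts the threshold by about $C$, so Gaussian convergence makes the enlarged cap have measure above $1-\alpha$. Two details are worth stating explicitly: a cap of measure $\sigma_{n-1}(A_n)\ge\alpha$ contains the cap of measure exactly $\alpha$, so the threshold may be taken at the $\alpha$-quantile uniformly in $A_n$; and L\'evy's inequality passes from closed to Borel sets by inner regularity, as the paper notes.
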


\begin{proof}
For part~\textup{(a)}, choose thresholds
\[ -\infty=t_0<t_1<\cdots<t_m=+\infty \]
so that the standard Gaussian measure of $[t_{i-1},t_i]$ is $p_i$.  Delete intervals of a sufficiently small fixed width adjacent to the finite thresholds, and let $B_{n,i}$ consist of points for which $\sqrt n\theta_1$ lies in the remaining part of $[t_{i-1},t_i]$.  Weak convergence of $\sqrt n\theta_1$ proves \eqref{eq:spherical-block-masses}.  The surviving intervals have a fixed positive separation, and coordinate projection does not increase Euclidean distance.  This proves \eqref{eq:spherical-block-separation}.

For part~\textup{(b)}, use L\'evy's spherical isoperimetric inequality \cite[Theorem~2.3]{shioya2016mmg}.  For every closed set $A\subset S^{n-1}$, every geodesic cap $C_A$ with $\sigma_{n-1}(C_A)=\sigma_{n-1}(A)$, and every $r>0$, it states
\[ \sigma_{n-1}(\{x\colon d_{S^{n-1}}(x,A)<r\})\ge\sigma_{n-1}(\{x\colon d_{S^{n-1}}(x,C_A)<r\}). \]
Inner regularity of spherical measure extends this inequality to Borel sets by approximation from within and passage to the limit in the measure of the cap. Write a cap of measure $\alpha$ as
\[ \{\theta\colon\sqrt n\theta_1\ge t_{n,\alpha}\}. \]
The sequence $t_{n,\alpha}$ is bounded by Gaussian convergence.  Expanding the cap through geodesic distance $C/\sqrt n$ decreases the first-coordinate threshold by at least $C/2$ for all sufficiently large $n$, uniformly over the bounded possible thresholds.  Indeed,
\[
 \begin{aligned}
 \sqrt n\cos\left(\arccos\frac{t_{n,\alpha}}{\sqrt n}+\frac{C}{\sqrt n}\right)
 &=t_{n,\alpha}\cos\frac{C}{\sqrt n}
 -\sqrt{n-t_{n,\alpha}^2}\sin\frac{C}{\sqrt n}\\
 &\le t_{n,\alpha}-\frac C2
 \end{aligned}
\]
for all sufficiently large $n$, uniformly because $t_{n,\alpha}$ is bounded.  Choose $C$ so that the Gaussian mass below the decreased threshold is less than $\alpha/2$.  Gaussian convergence shows that the complement of the enlarged cap has spherical measure below $\alpha$.  Spherical isoperimetry gives the same conclusion for the $C/\sqrt n$-neighborhood of $A_n$.  Since $B_n$ has measure at least $\alpha$, it meets this neighborhood.  Chord distance is at most geodesic distance, which proves \eqref{eq:spherical-nearby-points}.  This completes the proof.
\end{proof}

\subsection{Finite-star approximation}

High-mass diagonal selection and profile quantization turn compact-window approximations into measurements of finite critical stars.

\begin{lemma}\label{lem:diagonal-high-mass-selection}
Let $(X_n,d_n,\mu_n)$ and $(Y,d_Y,\nu)$ be qm-spaces, denoted simply by $X_n$ and $Y$, respectively.  For each $j$, suppose that Borel sets $A_{n,j}\subset X_n$, Borel maps $p_{n,j}\colon A_{n,j}\to Y$, numbers $\varepsilon_{n,j}\ge0$, numbers $\delta_j\downarrow0$, and finite Borel measures $\nu_j\to\nu$ satisfy
\begin{align*}
 \liminf_{n\to\infty}\mu_n(A_{n,j})&\ge1-\delta_j,\\
 \limsup_{n\to\infty}\varepsilon_{n,j}&\le\delta_j,\\
(p_{n,j})_*(\mu_n|_{A_{n,j}})&\longrightarrow\nu_j \quad(n\to\infty),
\end{align*}
and
\[ d_Y(p_{n,j}(x),p_{n,j}(y)) \le d_n(x,y)+\varepsilon_{n,j} \qquad(x,y\in A_{n,j}). \]
Then there is a sequence $j(n)\to+\infty$ for which the selected sets have mass tending to one, the selected additive errors tend to zero, and
\[ (p_{n,j(n)})_*(\mu_n|_{A_{n,j(n)}})\longrightarrow\nu. \]
\end{lemma}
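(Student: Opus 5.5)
We use a standard diagonal argument, driven by a metric that metrizes weak convergence of finite Borel measures on $Y$. Fix such a metric $\mathsf d$: for instance the Prokhorov (Lévy--Prokhorov) distance for finite measures on the separable metric space $(Y,d_Y^{\mathrm s})$, or a bounded-Lipschitz distance. Weak convergence $\nu_j\to\nu$ and $(p_{n,j})_*(\mu_n|_{A_{n,j}})\to\nu_j$ are then equivalent to convergence in $\mathsf d$. Write $\lambda_{n,j}\coloneqq(p_{n,j})_*(\mu_n|_{A_{n,j}})$.

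For each $j$ we choose a threshold $N_j$, with $N_1<N_2<\cdots$ strictly increasing, such that for all $n\ge N_j$ the following three bounds hold:
\[ \mu_n(A_{n,j})\ge1-2\delta_j-\tfrac1j,\qquad \varepsilon_{n,j}\le\delta_j+\tfrac1j,\qquad \mathsf d(\lambda_{n,j},\nu_j)\le\tfrac1j. \]
Such $N_j$ exists by the three hypotheses: the liminf bound for the masses, the limsup bound for the errors, and the weak convergence $\lambda_{n,j}\to\nu_j$. We then set $j(n)\coloneqq\max\{j\colon N_j\le n\}$ for $n\ge N_1$, and $j(n)=1$ for smaller $n$. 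Since the $N_j$ are finite and strictly increasing, $j(n)\to+\infty$, and by construction $n\ge N_{j(n)}$ for all $n\ge N_1$.

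With this choice, for $n\ge N_1$ the three bounds hold at $j=j(n)$. The masses therefore satisfy $\mu_n(A_{n,j(n)})\ge1-2\delta_{j(n)}-1/j(n)\to1$. The additive errors satisfy $\varepsilon_{n,j(n)}\le\delta_{j(n)}+1/j(n)\to0$. For the measures, the triangle inequality gives
\[ \mathsf d(\lambda_{n,j(n)},\nu)\le 1/j(n)+\mathsf d(\nu_{j(n)},\nu)\to0, \]
and the last term tends to zero because $\nu_j\to\nu$ and $j(n)\to\infty$. The distortion inequality $d_Y(p_{n,j(n)}(x),p_{n,j(n)}(y))\le d_n(x,y)+\varepsilon_{n,j(n)}$ holds on $A_{n,j(n)}$ by hypothesis.

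The only point needing care is the claim that weak convergence of finite (sub-probability) measures on $Y$ is metrizable. This is what allows the double limit to be collapsed into a single sequence. Finite measures need not have total mass one here, so we note that the Prokhorov metric extends to finite measures on separable metric spaces. Alternatively, we can pair total-mass convergence with the bounded-Lipschitz distance of the normalized measures. Either choice makes the diagonal extraction legitimate.
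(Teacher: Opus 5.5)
Your proposal is correct and is essentially the paper's proof: you fix a metric that metrizes weak convergence of finite measures on $Y$, choose increasing thresholds $N_j$ beyond which the three bounds hold, set $j(n)$ to be the largest $j$ with $N_j\le n$, and conclude with the triangle inequality. The only difference is cosmetic: your $1/j$ slack replaces the paper's preliminary passage to a subsequence of indices with $d(\nu_j,\nu)\le\delta_j$ and its bounds in terms of $2\delta_j$, and this slack also covers the case in which some $\delta_j$ vanish.
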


\begin{proof}
Choose a metric that metrizes weak convergence of finite Borel measures on $Y$.  After passing to a subsequence of the indices $j$, assume that the distance from $\nu_j$ to $\nu$ is at most $\delta_j$.  For each $j$, choose $n_j$ so large that, for $n\ge n_j$, the mass is at least $1-2\delta_j$, the additive error is at most $2\delta_j$, and the restricted pushforward is within $\delta_j$ of $\nu_j$.  Take the integers $n_j$ increasing and set $j(n)$ equal to the largest $j$ with $n_j\le n$.  The three asserted limits follow from the triangle inequality.
	This completes the proof.
\end{proof}

\begin{theorem}\label{thm:finite-profile-quantization}
Let $I=[\delta,M]\Subset(0,+\infty)$ and let $K$ be a compact family of continuous maps $g\colon I\to[-R,R]^N$ satisfying
\begin{align}
d_N^+(g(s),g(t))&\le2(t-s)_+, \label{eq:star-profile-same}\\
d_N^+(g(s),h(t))&\le2t \quad(g\ne h) \label{eq:star-profile-cross}
\end{align}
for all $g,h\in K$ and $s,t\in I$.  Let $\varpi$ be a probability measure on $K$.  If a probability measure $\mu$ satisfies
\begin{equation}\label{eq:star-evaluation-submeasure} \bigl((g,s)\mapsto g(s)\bigr)_* \bigl(\varpi\otimes(e^{-s}\,ds|_I)\bigr)\le\mu, \end{equation}
then, for every $\eps>0$, a bounded one-sided measurement of a finite critical star has pushforward measure at Prokhorov distance at most
\begin{equation}\label{eq:star-quantization-error} \eps+2\int_{(0,+\infty)\setminus I}e^{-s}\,ds \end{equation}
from $\mu$.  Suppose that $\beta_n\to0$.  Then every weak limit of bounded measurements of $2\beta_n\widehat X_n$ belongs to $\mathcal M(\mathcal S_{\rhd};N,R)$.
\end{theorem}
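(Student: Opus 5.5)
The statement has two parts: a deterministic quantization estimate for compact profile families, and the upper inclusion for $2\beta_n\widehat X_n$ as $\beta_n\to0$. I would prove them in that order.

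\emph{Quantization.} Since $K$ is compact in the uniform topology, fix $\eps>0$ and choose a finite Borel partition $K=K_1\sqcup\cdots\sqcup K_m$ into pieces of uniform diameter at most $\eps$. Discard pieces of zero $\varpi$-mass, put $p_i=\varpi(K_i)$, and fix representatives $g_i\in K_i$.

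On the star $T_m^{\rhd}(p_1,\ldots,p_m)$ I would first define a map only on $I\times\{1,\ldots,m\}$, namely $F(s,i)=g_i(s)$. I then check the one-sided condition with additive error zero on this set.
\begin{itemize}
\item On the same branch, \eqref{eq:star-profile-same} gives $d_N^+(g_i(s),g_i(t))\le2(t-s)_+$.
\item On distinct branches $i\ne j$, the profiles $g_i\ne g_j$, so \eqref{eq:star-profile-cross} gives $d_N^+(g_i(s),g_j(t))\le2t$.
\end{itemize}
Both right-hand sides are exactly $d_{\rhd}$, so $F$ is exactly one-sided $1$-Lipschitz on this set.

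The star measure restricted to $I\times\{1,\ldots,m\}$ pushes forward under $F$ to $\sum_ip_i\,(g_i)_*(e^{-s}ds|_I)$. Coupling $\varpi|_{K_i}$ with the point mass at $g_i$ gives a coupling of this measure with $E_*(\varpi\otimes e^{-s}ds|_I)$ whose paired points lie within $\eps$.

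Because of the submeasure hypothesis \eqref{eq:star-evaluation-submeasure}, I would now apply \Cref{lem:one-sided-extension} with error $0$, in the coupling form of its proof. Concretely, I extend $F$ to the whole star, couple the common part of the two measures within $\eps$, and couple the leftover mass $e^{-\delta}-e^{-M}$-complement arbitrarily. Counting this leftover on both sides gives the Prokhorov bound \eqref{eq:star-quantization-error}, namely $\eps$ plus twice the exponential tail outside $I$.

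\emph{Upper inclusion.} Let $F_n\colon2\beta_n\widehat X_n\to[-R,R]^N$ be measurements whose pushforwards converge to $\mu$, and fix $I=[\delta,M]$. Parametrize heights by $s=S$-variable, so $y=e^{-s/(2\beta_n)}$, and for each $z$ set $g_{n,z}(s)=F_n(z,e^{-s/(2\beta_n)})$.
\begin{itemize}
\item \textbf{Same-ray profile inequality.} By \eqref{eq:small-same-ray}, each profile satisfies \eqref{eq:star-profile-same}. So the profiles are coordinatewise nondecreasing and $2$-Lipschitz, and they lie in a compact set $\mathscr K_I$.
\item \textbf{Limit of profile laws.} Let $\varpi_n$ be the law of $z\mapsto g_{n,z}$ under $\sigma_{n-1}$, and pass to a subsequence with $\varpi_n\Rightarrow\varpi$.
\item \textbf{Submeasure relation.} Radial--angular independence together with \Cref{thm:small-beta-radial} gives $S_n\Rightarrow\Exp(1)$. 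Then \Cref{lem:compact-profile-passage} yields $E_*(\varpi\otimes e^{-s}ds|_I)\le\mu$.
\item \textbf{Closure of the vertical inequality.} The same-ray inequality is closed under uniform limits, so it holds on $\supp\varpi$.
\end{itemize}

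The main obstacle is the cross inequality \eqref{eq:star-profile-cross} for distinct $g,h\in\supp\varpi$. I would take small uniform balls $A,B$ around $g$ and $h$. Their preimages in the sphere have $\sigma_{n-1}$-measure at least some $\alpha>0$ for large $n$, since $\varpi_n\Rightarrow\varpi$ and $g,h$ are in the support. \Cref{thm:spherical-blocks-nearby}(b) then supplies $z\in A$-preimage and $z'\in B$-preimage with $|z-z'|\le C_\alpha$.

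The one-sided inequality for $F_n$ then bounds $d_N^+(g_{n,z}(s),g_{n,z'}(t))$ by $2\beta_n\Omega_{y,y'}(r)$ with $r\le C_\alpha$. Monotonicity in $r$ and \eqref{eq:small-cross} make the limit at most $2t$, uniformly in $s,t\in I$.

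There is one gap. \eqref{eq:small-cross} needs $r\ge c>0$, but a small $r$ gives a smaller kernel, so I would bound by $\Omega(\max\{r,c\})$. This is justified if $\Omega$ is increasing in $r$ and $2\beta_n\Omega_{y,y'}(c)\to2t$, and the latter holds for any fixed $c>0$. Letting the ball radii shrink gives \eqref{eq:star-profile-cross} with a vanishing error.

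To finish, I apply the quantization part to $K=\supp\varpi$ and use compactness of $\mathcal M(\mathcal S_{\rhd};N,R)$. Letting $\eps\to0$ and $I\uparrow(0,+\infty)$ places $\mu$ in $\mathcal M(\mathcal S_{\rhd};N,R)$.
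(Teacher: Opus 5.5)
Your proposal is correct and follows the paper's proof essentially step for step. The quantization part uses the same $\eps$-partition of $K$ with representatives $g_i$, the same star map $f(s,i)=g_i(s)$ extended by \Cref{lem:one-sided-extension}, and the same coupling pairing $g\in K_i$ with $g_i$. The upper inclusion uses the same profiles $g_{n,z}(s)=F_n(z,e^{-s/(2\beta_n)})$, the same-ray kernel, a weak limit of the profile laws, spherical proximity together with monotonicity of $\Omega$ evaluated at the fixed constant $C_\alpha$ for the cross inequality, and exhaustion with compactness of $\mathcal M(\mathcal S_{\rhd};N,R)$. The only difference is cosmetic: you route the submeasure passage through \Cref{lem:compact-profile-passage}, whereas the paper does it inline.
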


\begin{proof}
Partition $K$ into finitely many nonempty Borel sets $K_1,\ldots,K_m$ of uniform diameter at most $\eps$, discard the sets of zero $\varpi$-measure, and choose $g_i\in K_i$.  Put $p_i\coloneqq\varpi(K_i)$.  On $I\times\{1,\ldots,m\}$ define
\[ f(s,i)\coloneqq g_i(s). \]
Equations \eqref{eq:star-profile-same} and \eqref{eq:star-profile-cross} are precisely the directed Lipschitz inequalities for the star distance.  \Cref{lem:one-sided-extension} extends $f$ to a bounded measurement of the full star.  Couple each $g\in K_i$ with $g_i$ and retain the radial variable.  The evaluation values differ by at most $\eps$ on $I$.  Coupling the two remaining measures arbitrarily proves \eqref{eq:star-quantization-error}.

It remains to verify that limiting profiles satisfy the hypotheses.  Let $F_n\colon2\beta_n\widehat X_n\to[-R,R]^N$ be bounded one-sided measurements whose pushforward measures tend to $\mu$.  Define
\[ g_{n,\theta}(s) \coloneqq F_n(\sqrt n\theta,e^{-s/(2\beta_n)}), \qquad s\in I. \]
For $s<t$, applying the measurement inequality to the ordered pairs $(t,s)$ and $(s,t)$ and using \eqref{eq:small-same-ray} gives, coordinatewise,
\[ 0\le g_{n,\theta,j}(t)-g_{n,\theta,j}(s)\le2(t-s) \qquad(1\le j\le N). \]
Thus \eqref{eq:small-same-ray} places the profiles in the compact class of coordinatewise nondecreasing $2$-Lipschitz maps.  Let $\varpi_n$ be their profile pushforward measures and pass to a weak limit $\varpi$.  The same-ray inequality passes directly to the support of $\varpi$.

Let $g,h\in\supp\varpi$ be distinct.  Disjoint uniform neighborhoods of $g$ and $h$ have positive $\varpi$-mass, so their inverse images among the angular variables have measures bounded below.  Write $\alpha>0$ for a common lower bound and take the neighborhoods to have uniform radius $\eps>0$.  \Cref{thm:spherical-blocks-nearby}(b) gives representatives $\theta_n$ and $\eta_n$ with scaled chord distance at most $C_\alpha$.  Monotonicity of $\Omega_{s,t}$ and \eqref{eq:small-cross}, evaluated at that constant, give the explicit comparison
\[
 \begin{aligned}
d_N^+(g(s),h(t)) &\le2\eps+d_N^+(g_{n,\theta_n}(s),g_{n,\eta_n}(t))\\
 &\le2\eps+2\beta_n\Omega_{e^{-s/(2\beta_n)},e^{-t/(2\beta_n)}}
 \bigl(\sqrt n|\theta_n-\eta_n|\bigr)\\
 &\le2\eps+2\beta_n\Omega_{e^{-s/(2\beta_n)},e^{-t/(2\beta_n)}}(C_\alpha).
 \end{aligned}
\]
Taking the upper limit gives $d_N^+(g(s),h(t))\le2\eps+2t$.  Shrinking the profile neighborhoods, equivalently letting $\eps\downarrow0$, gives
\[ d_N^+(g(s),h(t))\le2t. \]
This proves \eqref{eq:star-profile-cross}.  For a nonnegative continuous function $\varphi$ on $[-R,R]^N$, independence of the angular and radial variables gives the product measures and the finite-$n$ identity
\[
 \begin{aligned}
&\int\varphi\,d\bigl((g,s)\mapsto g(s)\bigr)_* \bigl(\varpi_n\otimes((S_n)_*\mathbb P|_I)\bigr)\\
 &\qquad=\mathbb E\left[\mathbf 1_{\{S_n\in I\}}
 \varphi\bigl(F_n(\sqrt n\Theta_n,e^{-S_n/(2\beta_n)})\bigr)\right]\\
&\qquad\le\int\varphi\,d(F_n)_* \bigl(\sigma_{n-1}\otimes(Y_n)_*\mathbb P\bigr),
 \end{aligned}
\]
where $\Theta_n$ has distribution $\sigma_{n-1}$.  The endpoints of $I$ have zero mass under the exponential limit.  Therefore, \Cref{thm:small-beta-radial} gives convergence of the restricted height measures, and continuity of evaluation yields \eqref{eq:star-evaluation-submeasure}.  Apply the first part with $I_j=[1/j,j]$ and $\eps=1/j$.  The quantization bound satisfies
\[ \frac1j+2\int_{(0,+\infty)\setminus I_j}e^{-s}\,ds\longrightarrow0. \]
The resulting pushforward measures belong to the compact set $\mathcal M(\mathcal S_{\rhd};N,R)$ and converge to $\mu$, so compactness proves the final assertion.  This completes the proof.
\end{proof}

\section{Marked Gaussian products and high-horocone tangent limits}\label{app:marked-product-proofs}

Write the symmetric product metric under the square root as $d_{2,Z}$, and put $\psi_q(u,z)\coloneqq qu$.  It follows from
\Cref{eq:Dq-general} that
\begin{equation}\label{eq:Dq-potential}
 D_{q,Z}((u,z),(v,z'))=d_{2,Z}((u,z),(v,z'))+\psi_q(v,z')-\psi_q(u,z).
\end{equation}
Therefore,
\begin{equation}\label{eq:Dq-observables}
 f\in\Lipplus(\mathsf G_q(Z,\vartheta))
 \quad\Longleftrightarrow\quad
 f-\psi_q\in\operatorname{Lip}_1(\R\times_2 Z).
\end{equation}
Subtracting the potential increment in \Cref{eq:Dq-potential} from the one-sided Lipschitz inequality and interchanging the
two points gives the implication from left to right.  The converse follows by restoring the potential increment.

\begin{lemma}[Marked-product transfer]
\label{lem:marked-product-transfer}
Let $(Z_n)$ be ordinary mm-spaces whose associated pyramids converge weakly to an ordinary pyramid $\mathcal P$, and
suppose that probability measures on $\R$ converge weakly as $\vartheta_n\Rightarrow\vartheta$.  Then the ordinary product
spaces satisfy
\begin{equation}\label{eq:ordinary-marked-product-limit}
 \Pyr\bigl((\R,|\cdot|,\vartheta_n)\times_2 Z_n\bigr)
 \longrightarrow
 \overline{\bigcup_{Z\in\mathcal P}
 \Pyr((\R,|\cdot|,\vartheta)\times_2 Z)}^{\,\Box}.
\end{equation}
\end{lemma}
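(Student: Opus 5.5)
The proof will copy the structure of the proof of \Cref{thm:joint-horocone-pyramid-continuity}, in the simpler symmetric setting: Euclidean profiles replace horocone profiles, and \Cref{lem:euclidean-profile-metric} replaces \Cref{thm:profile-path-metric}. Write $\mathcal Q_\infty$ for the right-hand side of \eqref{eq:ordinary-marked-product-limit}. First, $\mathcal Q_\infty$ is a pyramid: the family $\{(\R,\vartheta)\times_2 Z\}_{Z\in\mathcal P}$ is directed, because a product of a domination map with $\operatorname{id}_\R$ is again a domination map, so the symmetric version of \Cref{thm:directed-union-closure} applies. By sequential compactness we then pass to a subsequence along which $\Pyr((\R,\vartheta_n)\times_2 Z_n)\to\mathcal Q$, and we show $\mathcal Q=\mathcal Q_\infty$.

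For the lower inclusion $\mathcal Q_\infty\subset\mathcal Q$, fix $Z\in\mathcal P$. The inner condition gives $W_n\in\Pyr(Z_n)$ with $W_n\to Z$ in box distance. The product of the domination map with $\operatorname{id}_\R$ gives $(\R,\vartheta_n)\times_2W_n\preceq(\R,\vartheta_n)\times_2Z_n$. Next, $(\R,\vartheta_n)\times_2W_n\to(\R,\vartheta)\times_2Z$ in box distance. To see this, couple $\vartheta_n$ with $\vartheta$ by a Prokhorov coupling and couple the bases by an almost-isometric high-mass relation. The $\ell^2$ product distortion is at most the sum of the two distortions, since $\sqrt{a^2+b^2}$ is $1$-Lipschitz in each argument. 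The outer condition then puts $(\R,\vartheta)\times_2Z$ in $\mathcal Q$. Downward and box closedness of $\mathcal Q$ finish this inclusion.

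For the upper inclusion, take measurements $\nu_n\to\nu$ with $\nu_n=(F_n)_*(\vartheta_n\otimes\mu_{W_n})$ for some $W_n\preceq Z_n$ (up to a $1/n$ Prokhorov perturbation). Each coordinate of $F_n$ is $1$-Lipschitz on the product. Fix a compact interval $J=[a,b]$ whose endpoints are $\vartheta$-null. For $z\in W_n$, form the profile $g_{n,z}\coloneqq F_n(\cdot,z)|_J\in\mathscr L(J,R)$. The Lipschitz inequality gives $\lVert g_{n,z}(u)-g_{n,z'}(v)\rVert_\infty\le\sqrt{(u-v)^2+d(z,z')^2}$, hence $r_0(g_{n,z},g_{n,z'})\le d(z,z')$ and so $d_{J,R}(g_{n,z},g_{n,z'})\le d(z,z')$. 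The pushforward profile spaces are therefore factors of $W_n$ and lie in $\Pyr(Z_n)$. By compactness of $\mathscr L(J,R)$ they converge, along a subsequence, to a space $Z_J$. \Cref{lem:common-ambient-box} places $Z_J$ in $\mathcal P$.

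The evaluation map is $1$-Lipschitz on $(\R\cap J)\times_2Z_J$ by \eqref{eq:euclidean-profile-evaluation}. Extending it by \Cref{lem:one-sided-extension} gives a measurement $\rho_J$ of $(\R,\vartheta)\times_2Z_J\in\mathcal Q_\infty$. \Cref{lem:compact-profile-passage} gives the submeasure relation $\operatorname{ev}_*(q^J\otimes\vartheta|_J)\le\nu$, and hence $d_{\mathrm P}(\rho_J,\nu)\le2\vartheta(\R\setminus J)$. Exhausting $\R$ by such intervals and using compactness of $\mathcal M(\mathcal Q_\infty;N,R)$ gives $\nu\in\mathcal M(\mathcal Q_\infty;N,R)$. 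By \eqref{eq:measurement-convergence-criterion} this yields $\mathcal Q\subset\mathcal Q_\infty$. Since the original subsequence was arbitrary, the full sequence converges.

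The main obstacle is this upper inclusion: the base and the mark law vary simultaneously, and the limiting measurement must be realized over a single limiting base $Z_J\in\mathcal P$. The profile compactification is what resolves it. The step to check carefully is that the $\ell^2$-type chain metric $d_{J,R}$ is dominated by the base distance, including the cap $2R$. This holds because the capped distance is a minimum of $2R$ and the path infimum, and the one-edge chain already gives the bound $d(z,z')$.
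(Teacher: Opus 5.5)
Your argument is correct, but it proves the lemma directly, whereas the paper reduces it to an earlier result.

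\textbf{The paper's route.} The paper applies \Cref{prop:endpoint-potential-transport} with $\mathcal P_n=\Pyr(Z_n)$ and $q_n=q=0$. Then $D_{0,Z}=d_{2,Z}$, and the identity $\mathsf G_0(\Pyr(Z_n),\vartheta_n)=\Pyr\bigl((\R,|\cdot|,\vartheta_n)\times_2Z_n\bigr)$ follows from $Z_n\in\Pyr(Z_n)$, functoriality of products with $\operatorname{id}_\R$, and Box closure. Finally, $\mathsf G_0(\mathcal P,\vartheta)$ is by definition your $\mathcal Q_\infty$.

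\textbf{Your route.} What you wrote is essentially the appendix proof of that proposition rerun at $q=0$; it is a closer model than the horocone theorem you cite. The specialization does simplify it:
\begin{itemize}
\item no potential term $q_nu\,\mathbf 1_N$ has to be subtracted;
\item profiles lie in $\mathscr L(J,R)$ instead of $\mathscr L(J,R+M)$;
\item the limiting evaluation needs no clipping;
\item generator Box convergence follows from your additive $\ell^2$ distortion bound, with no compact truncation of the marks.
\end{itemize}
The paper's route is shorter because it reuses a result it needs anyway for $q>0$. Yours is self-contained, at the cost of duplicating the profile compactification.

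\textbf{Small points to tidy.}
\begin{itemize}
\item You may take $W_n=Z_n$, since measurements of members of $\Pyr(X)$ are measurements of $X$.
\item When $\vartheta_n$ lacks full support, $F_n$ is defined only on $\supp\vartheta_n\times Z_n$. You must first extend it, using the zero-error case of \Cref{lem:one-sided-extension}, before $g_{n,z}$ is defined on all of $J$.
\item \Cref{lem:compact-profile-passage} is stated on $(0,+\infty)$, so you are using its verbatim analogue on $\R$.
\item Concluding $\mathcal Q\subset\mathcal Q_\infty$ from the measurement inclusion also uses \eqref{eq:measurement-membership-criterion}.
\end{itemize}
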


\begin{proof}
Apply \Cref{prop:endpoint-potential-transport} with $\mathcal P_n=\Pyr(Z_n)$ and $q_n=q=0$.  In this case,
\Cref{eq:Dq-general} gives $D_{0,Z}=d_{2,Z}$, and
\[
 \mathsf G_0(\Pyr(Z_n),\vartheta_n)
 =\Pyr\bigl((\R,|\cdot|,\vartheta_n)\times_2 Z_n\bigr).
\]
Indeed, $Z_n\in\Pyr(Z_n)$ gives the inclusion from right to left, while functoriality under the product of an exact
domination map with the identity on $\R$, followed by Box closure, gives the reverse inclusion.  Moreover,
\Cref{eq:marked-pyramid-operation} identifies $\mathsf G_0(\mathcal P,\vartheta)$ with the right-hand side of
\Cref{eq:ordinary-marked-product-limit}.  The conclusion of \Cref{prop:endpoint-potential-transport} therefore gives
\Cref{eq:ordinary-marked-product-limit}.  This completes the proof.
\end{proof}

\begin{proof}[Proof of \Cref{prop:endpoint-potential-transport}]
We prove the two Painlev\'e--Kuratowski inclusions through bounded measurements.  For the lower inclusion, fix
$Z\in\mathcal P$.  The inner condition for $\mathcal P_n\to\mathcal P$ supplies ordinary mm-spaces
$W_n\in\mathcal P_n$ such that $W_n\to Z$ in ordinary Box distance.  We claim that
\begin{equation}\label{eq:directed-generator-box-limit}
 \mathsf G_{q_n}(W_n,\vartheta_n)
 \longrightarrow
 \mathsf G_q(Z,\vartheta)
\end{equation}
in qm-Box distance.

Fix $\varepsilon>0$.  Choose a compact mark interval $J=[-M,M]$ with continuity-point endpoints such that
$\vartheta(\R\setminus J)<\varepsilon$.  For all large $n$, weak convergence gives
$\vartheta_n(\R\setminus J)<2\varepsilon$ and a Prokhorov coupling of $\vartheta_n$ and $\vartheta$ under which the marks
differ by more than $\varepsilon$ on mass at most $\varepsilon$.  For the bases, choose a high-mass Box relation between
$W_n$ and $Z$, and if necessary restrict each base to a compact set of mass greater than $1-\varepsilon$.  The base
distances are bounded on the resulting relation, and their distortion tends to zero.

On these compact restrictions, the function
\[
 (r,u,v,a)\longmapsto
 \sqrt{(u-v)^2+r^2}+a(v-u)
\]
is uniformly continuous.  The base-distance distortion tends to zero, the coupled marks approach each other, and
$q_n\to q$.  Hence the distortion of the qm-distances in \Cref{eq:Dq-general} tends to zero.  The exceptional mass tends
to zero by first taking $n\to\infty$ and then $M\to\infty$ and $\varepsilon\downarrow0$.  This proves
\Cref{eq:directed-generator-box-limit}.  Every finite generator of
$\mathsf G_q(\mathcal P,\vartheta)$ therefore belongs to every subsequential pyramid limit, proving the lower inclusion.

For the upper inclusion, fix $N\ge1$ and $R>0$, and suppose that
\[
 \nu_n\in
 \mathcal M(\mathsf G_{q_n}(\mathcal P_n,\vartheta_n);N,R),
 \qquad \nu_n\Rightarrow\nu.
\]
By the definition and closure of the measurement set, for every $n$ there are an ordinary mm-space
$Z_n\in\mathcal P_n$ and a $1$-Lipschitz map
\[
 F_n\colon\mathsf G_{q_n}(Z_n,\vartheta_n)\longrightarrow[-R,R]^N
\]
such that
\[
 d_{\mathrm P}\bigl((F_n)_*(\vartheta_n\otimes\mu_{Z_n}),\nu_n\bigr)<\frac1n.
\]
Replacing $\nu_n$ by this pushforward does not change its limit, so assume below that
\[
 \nu_n=(F_n)_*(\vartheta_n\otimes\mu_{Z_n}).
\]
The coordinatewise extension formula in \Cref{lem:one-sided-extension} is independent of the measure.  Extend $F_n$ from
$\supp\vartheta_n\times Z_n$ to all of $\R\times Z_n$ without changing its pushforward measure.

Fix a compact interval $J=[-M,M]$ whose endpoints are continuity points of $\vartheta$, and put
$\mathbf1_N\coloneqq(1,\ldots,1)\in\R^N$.  For $z\in Z_n$, define
\begin{equation}\label{eq:directed-transformed-profile}
 g_{n,z}(u)\coloneqq F_n(u,z)-q_nu\,\mathbf1_N,
 \qquad u\in J.
\end{equation}
Applying \Cref{eq:Dq-potential} in both orders gives, for $u,v\in J$ and $z,z'\in Z_n$,
\begin{equation}\label{eq:directed-profile-cross-bound}
 \|g_{n,z}(u)-g_{n,z'}(v)\|_\infty
 \le\sqrt{(u-v)^2+d_{Z_n}(z,z')^2}.
\end{equation}
Indeed, subtracting the potential increment $q_n(v-u)$ from the $1$-Lipschitz inequality for each coordinate of $F_n$
bounds the difference in one direction; interchanging the points gives the reverse bound.  Since $F_n$ takes values in
$[-R,R]^N$ and $|q_nu|\le M$, the profiles take values in the fixed cube $[-R-M,R+M]^N$.

Apply \Cref{lem:euclidean-profile-metric} with $A=R+M$.  The space $\mathscr L(J,R+M)$ is compact, and its metric
$d_{J,R+M}$ satisfies
\begin{equation}\label{eq:directed-profile-evaluation}
 \|g(u)-h(v)\|_\infty
 \le\sqrt{(u-v)^2+d_{J,R+M}(g,h)^2}.
\end{equation}
We pass the profile measures to the limit on this common compact profile space.

Equation \eqref{eq:directed-profile-cross-bound} shows that $z\mapsto g_{n,z}$ is $1$-Lipschitz.  Let $\kappa_n^J$ be the
pushforward of the base measure $\mu_{Z_n}$ under this map, and put
\[
 V_n^J\coloneqq(\supp\kappa_n^J,d_{J,R+M},\kappa_n^J).
\]
The profile map is an exact domination map from $Z_n$ to $V_n^J$, so $V_n^J\preceq Z_n$ and $V_n^J\in\mathcal P_n$.

Compactness of $\mathscr L(J,R+M)$ permits passage to a subsequence such that $\kappa_n^J\Rightarrow\kappa^J$.  Put
\[
 V^J\coloneqq(\supp\kappa^J,d_{J,R+M},\kappa^J).
\]
A Prokhorov coupling on the common compact metric space gives $V_n^J\to V^J$ in ordinary Box distance.  The outer condition
for $\mathcal P_n\to\mathcal P$ then gives $V^J\in\mathcal P$.

On the subset $J\times V^J$, define
\begin{equation}\label{eq:directed-profile-limit-map}
 E_M^0(u,g)\coloneqq
 \operatorname{clip}_{[-R,R]^N}
 \bigl(g(u)+qu\,\mathbf1_N\bigr).
\end{equation}
By \Cref{eq:directed-profile-evaluation}, each coordinate of the unclipped map after subtracting
$qu\,\mathbf1_N$ is $1$-Lipschitz for the ordinary $\ell_2$ product metric.  Hence
\Cref{eq:Dq-observables} shows that each coordinate before clipping is one-sided $1$-Lipschitz for $D_{q,V^J}$.
Coordinatewise clipping on $\R$ is nondecreasing and $1$-Lipschitz, so it preserves this inequality.  Thus $E_M^0$ is a
$1$-Lipschitz map from the restricted qm-space to $[-R,R]^N$.  Apply the coordinatewise formula in
\Cref{lem:one-sided-extension} directly to $E_M^0$.  Since the restriction already satisfies the exact $1$-Lipschitz
inequality, the extension agrees with it there and gives a $1$-Lipschitz map
\[
 E_M\colon\mathsf G_q(V^J,\vartheta)\longrightarrow[-R,R]^N.
\]

The profile variable depends only on the base point and is independent of the mark.  On the common compact space
$J\times\mathscr L(J,R+M)$, the maps
\[
 (u,g)\longmapsto
 \operatorname{clip}_{[-R,R]^N}
 \bigl(g(u)+q_nu\,\mathbf1_N\bigr)
\]
converge uniformly to $E_M^0$; their $\ell^\infty$ distance is at most $M|q_n-q|$.  The continuity-point assumption on the
endpoints of $J$ gives $\vartheta_n|_J\Rightarrow\vartheta|_J$, while $\kappa_n^J\Rightarrow\kappa^J$.  Weak convergence of the
product finite measures and uniform convergence of the maps therefore give
\[
 (F_n)_*(\vartheta_n|_J\otimes\mu_{Z_n})
 \Longrightarrow
 (E_M^0)_*(\vartheta|_J\otimes\kappa^J).
\]
The measure on the left is a submeasure of $\nu_n$, and $\nu_n\Rightarrow\nu$.  Passing to the limit against every
nonnegative continuous function yields
\[
 (E_M^0)_*(\vartheta|_J\otimes\kappa^J)\le\nu.
\]
Since $E_M=E_M^0$ on $J\times V^J$, the same finite measure is a submeasure of
$(E_M)_*(\vartheta\otimes\kappa^J)$, and its mass is $\vartheta(J)$.

Couple the common part of $\nu$ and $(E_M)_*(\vartheta\otimes\kappa^J)$ diagonally and the remaining equal-mass parts
arbitrarily.  Then
\begin{equation}\label{eq:directed-window-error}
 d_{\mathrm P}
 \left(\nu,(E_M)_*(\vartheta\otimes\kappa^J)\right)
 \le1-\vartheta(J).
\end{equation}
Because $V^J\in\mathcal P$, the right-hand pushforward belongs to
$\mathcal M(\mathsf G_q(\mathcal P,\vartheta);N,R)$.

Exhaust $\R$ by intervals $J=[-M,M]$ with continuity-point endpoints.  For each $J$, the subsequence producing the profile
limit and the base $V^J$ may be chosen anew.  Every resulting pushforward belongs to the fixed compact set
$\mathcal M(\mathsf G_q(\mathcal P,\vartheta);N,R)$, and the right-hand side of
\Cref{eq:directed-window-error} tends to zero.  Closedness gives
\[
 \nu\in\mathcal M(\mathsf G_q(\mathcal P,\vartheta);N,R),
\]
which proves the upper inclusion.  The two inclusions hold for every $N$ and $R$, so
\Cref{eq:measurement-convergence-criterion} proves \Cref{eq:directed-marked-product-limit}.
This completes the proof.
\end{proof}

\begin{proof}[Proof of \Cref{thm:horocone-tangent}]
Put $\vartheta_j\coloneqq\operatorname{Law}(\zeta_j)$.  Fix an ordinary mm-space $Z$ with distance $d_Z$.  Under the
change of variables $y=h_j-\xi$, the qm-distance on the horocone over $Z$, multiplied by $h_j$, becomes
\begin{equation}\label{eq:scaled-horocone-mark-kernel}
\begin{split}
 K_{j,Z}((\xi,z),(\zeta,z'))
 &\coloneqq 2h_j\arsinh\left(
   \frac{\sqrt{d_Z(z,z')^2+(\xi-\zeta)^2}}
        {2\sqrt{(h_j-\xi)(h_j-\zeta)}}
  \right)\\
 &\quad+h_j\log\frac{h_j-\xi}{h_j-\zeta}.
\end{split}
\end{equation}
Indeed, this follows by substituting $y=h_j-\xi$ and $y'=h_j-\zeta$ into
\Cref{eq:general-horocone-symmetric,eq:general-horocone-directed} and multiplying by $h_j$.

Consider bounded $\xi$, $\zeta$, and $r=d_Z(z,z')$.  Since $h_j\to\infty$,
\[
 \frac{\sqrt{r^2+(\xi-\zeta)^2}}
      {2\sqrt{(h_j-\xi)(h_j-\zeta)}}\longrightarrow0
\]
locally uniformly.  Since $\arsinh x/x\to1$ as $x\to0$,
\begin{equation}\label{eq:tangent-symmetric-kernel}
 2h_j\arsinh\left(
  \frac{\sqrt{r^2+(\xi-\zeta)^2}}
       {2\sqrt{(h_j-\xi)(h_j-\zeta)}}
 \right)
 \longrightarrow\sqrt{r^2+(\xi-\zeta)^2}
\end{equation}
locally uniformly.  For the potential term, if $|a|\le M$ and $h\ge2M$, then
\[
 \left|h\log(1-a/h)+a\right|
 \le\frac{a^2}{h(1-|a|/h)}
 \le\frac{2M^2}{h}.
\]
Apply this estimate with $a=\xi$ and $a=\zeta$ and take the difference.  Then
\begin{equation}\label{eq:tangent-potential-kernel}
 h_j\log\frac{h_j-\xi}{h_j-\zeta}
 \longrightarrow\zeta-\xi
\end{equation}
locally uniformly.  The limiting kernel in \Cref{eq:scaled-horocone-mark-kernel} is therefore
\[
 \sqrt{r^2+(\xi-\zeta)^2}+\zeta-\xi
 =D_{1,Z}((\xi,z),(\zeta,z')).
\]

We show that this local uniform convergence suffices to compare every bounded measurement.  Fix $N\ge1$ and $R>0$.
Choose continuity points $\pm M$ of $\vartheta$ such that $\vartheta([-M,M])$ is arbitrarily close to one.  Since
$\vartheta_j\Rightarrow\vartheta$, the same high-mass conclusion holds for $\vartheta_j$ for all large $j$.  Write
$K_j(r,\xi,\zeta)$ for the scalar kernel obtained from \Cref{eq:scaled-horocone-mark-kernel} by replacing
$d_Z(z,z')$ with $r\ge0$, and put
\[
 K(r,\xi,\zeta)\coloneqq\sqrt{r^2+(\xi-\zeta)^2}+\zeta-\xi.
\]
We claim that
\begin{equation}\label{eq:tangent-truncated-uniform}
\begin{split}
 \sup_{\substack{|\xi|,|\zeta|\le M\\r\ge0}}
 \bigl|&\min\{K_j(r,\xi,\zeta),2R\}\\
       &-\min\{K(r,\xi,\zeta),2R\}\bigr|
 \longrightarrow0.
\end{split}
\end{equation}

By the local uniform convergence in \Cref{eq:tangent-potential-kernel}, the potential term of $K_j$ is bounded in absolute
value by a constant $B_M$ for $|\xi|,|\zeta|\le M$ and all large $j$.  If $K_j(r,\xi,\zeta)\le2R$, its symmetric term is at
most $2R+B_M$.
Monotonicity of $\arsinh$ gives
\[
 \sqrt{r^2+(\xi-\zeta)^2}
 \le2(h_j+M)\sinh\left(\frac{2R+B_M}{2h_j}\right)
 \le C_{R,M}
\]
for a constant $C_{R,M}$.  The last bound follows from $h_j\to\infty$ and $\sinh x/x\to1$.  If instead
$K(r,\xi,\zeta)\le2R$, then
\[
 \sqrt{r^2+(\xi-\zeta)^2}
 \le2R+|\zeta-\xi|\le2R+2M.
\]
Thus, whenever at least one truncated kernel is below $2R$, the variable $r$ lies in a compact interval depending only on
$R$ and $M$.  If both truncated kernels equal $2R$, their difference is zero.  Applying the local uniform convergences in
\Cref{eq:tangent-symmetric-kernel,eq:tangent-potential-kernel} on this compact interval proves
\Cref{eq:tangent-truncated-uniform}.

For every ordinary base $Z$, the map
\[
 (z,y)\longmapsto(h_j-y,z)
\]
pushes the height law $\eta_j$ to the mark law $\vartheta_j$ and preserves the base measure, hence preserves the product
measure.  On $\{|h_j-y|\le M\}$, \Cref{eq:tangent-truncated-uniform} compares the scaled horocone distance and the
asymmetric marked-product distance uniformly in the base $Z$.  Applying \Cref{lem:bounded-kernel-transfer} to each
generator gives, for every fixed $N$ and $R$,
\begin{equation}\label{eq:tangent-measurement-comparison}
\begin{split}
 (d_{\mathrm P})_H\bigl(&
  \mathcal M(h_j\mathcal H_{\eta_j};N,R),\\
 &\mathcal M(\mathsf G_1(\mathcal G_0,\vartheta_j);N,R)
 \bigr)\longrightarrow0.
\end{split}
\end{equation}
More explicitly, first let $j\to\infty$ with $M$ fixed; the truncated-distance error on the window then tends to zero.
Next let $M\to\infty$; the mass outside the mark window tends uniformly to zero.  The error in
\Cref{eq:tangent-truncated-uniform} is independent of the base in the Gaussian pyramid, so the same estimate survives
passage to exact factors and qm-Box closure.  This proves \Cref{eq:tangent-measurement-comparison} from the generatorwise
comparison.

Finally apply \Cref{prop:endpoint-potential-transport} with the constant base pyramids
$\mathcal P_j=\mathcal P=\mathcal G_0$ and $q_j=q=1$.  Since $\vartheta_j\Rightarrow\vartheta$,
\[
 \mathsf G_1(\mathcal G_0,\vartheta_j)
 \longrightarrow\mathcal G_{1,\vartheta}.
\]
Together with \Cref{eq:tangent-measurement-comparison}, this identifies all bounded-measurement limits.  Since $N$ and $R$
were arbitrary, \Cref{eq:measurement-convergence-criterion} proves
\Cref{eq:horocone-tangent-limit}.  This completes the proof.
\end{proof}

\clearpage

\section*{Acknowledgments}

The author would like to thank Professor Takashi Shioya for many helpful suggestions and guidance. The author used Claude, GPT-5.5, and GPT-5.6-series Codex models as AI-assisted tools in preparing this manuscript.  The author reviewed and revised the mathematical content and takes full responsibility for the final manuscript.


\begin{thebibliography}{10}

\bibitem{billingsley1995probability}
P.~Billingsley.
\newblock {\em Probability and Measure}.
\newblock Wiley Series in Probability and Mathematical Statistics. John Wiley
  \& Sons, Inc., New York, third edition, 1995.

\bibitem{esaki-kazukawa-mitsuishi2024invariants}
S.~Esaki, D.~Kazukawa, and A.~Mitsuishi.
\newblock Invariants for {G}romov's pyramids and their applications.
\newblock {\em Adv. Math.}, 442:109583, 2024.

\bibitem{esaki-kazukawa-mitsuishi2024cones}
S.~Esaki, D.~Kazukawa, and A.~Mitsuishi.
\newblock Convergence of cones of metric measure spaces and its application to
  {C}auchy distribution.
\newblock {\em Int. Math. Res. Not. IMRN}, 2025(18):rnaf292, 2025.
\newblock arXiv:2402.14331.

\bibitem{kristaly-ohta-zhao2025analysis}
A.~Krist{\'a}ly, S.~Ohta, and W.~Zhao.
\newblock Analysis on asymmetric metric measure spaces: {$q$}-heat flow,
  {$q$}-{Laplacian} and {Sobolev} spaces, 2025.

\bibitem{kristaly-zhao2022geometry}
A.~Krist{\'a}ly and W.~Zhao.
\newblock On the geometry of irreversible metric-measure spaces: Convergence,
  stability and analytic aspects.
\newblock {\em Journal de Math{\'e}matiques Pures et Appliqu{\'e}es (9)},
  158:216--292, 2022.

\bibitem{ohta2021comparison}
S.~Ohta.
\newblock {\em Comparison Finsler Geometry}.
\newblock Springer Monographs in Mathematics. Springer, Cham, 2021.

\bibitem{romaguera2000semi}
S.~Romaguera and M.~Sanchis.
\newblock Semi-{Lipschitz} functions and best approximation in quasi-metric
  spaces.
\newblock {\em J. Approx. Theory}, 103(2):292--301, 2000.

\bibitem{shioya2016mmg}
T.~Shioya.
\newblock {\em Metric measure geometry}, volume~25 of {\em IRMA Lectures in
  Mathematics and Theoretical Physics}.
\newblock EMS Publishing House, Z\"urich, 2016.
\newblock Gromov's theory of convergence and concentration of metrics and
  measures.

\bibitem{shioya2017spheres}
T.~Shioya.
\newblock Metric measure limits of spheres and complex projective spaces.
\newblock In {\em Measure Theory in Non-Smooth Spaces}, Partial Differential
  Equations and Measure Theory, pages 261--287. De Gruyter Open, Warsaw, 2017.

\bibitem{shioya2022sugaku}
T.~Shioya.
\newblock Metric measure geometry: an approach to high-dimensional and
  infinite-dimensional spaces.
\newblock {\em Sugaku Expositions}, 35(2):221--241, 2022.
\newblock Translation of S\={u}gaku \textbf{71} (2019), no.~2, 159--177.

\bibitem{stojmirovic2004quasi}
A.~Stojmirovi{\'c}.
\newblock Quasi-metric spaces with measure.
\newblock {\em Topology Proceedings}, 28(2):655--671, 2004.

\bibitem{one-sided-pyramids}
S.~Yokota.
\newblock Extensions of one-sided box geometry and pyramid invariants to
  gd-sets and qm-spaces.
\newblock preprint, arXiv:2608.12749, 2026.

\bibitem{yokota-poincare-beta-balls}
S.~Yokota.
\newblock Poincar\'e beta balls: Radial laws, shell transforms, and phase
  diagram.
\newblock preprint, arXiv:2607.26979, 2026.

\bibitem{gds3-ja}
S.~Yokota.
\newblock Pyramidal compactification of asymmetric metric measure spaces via
  adjoint transport.
\newblock preprint, arXiv:2608.01145, 2026.

\end{thebibliography}
\end{document}